\newcommand*{\mailto}[1]{\href{mailto:#1}{\nolinkurl{#1}}}
\definecolor{darkgreen}{rgb}{0.5,0.25,0}
\definecolor{darkblue}{rgb}{0,0,1}
\definecolor{answerblue}{rgb}{0,0,0.75}
\numberwithin{equation}{section}
\newcommand{\norm}[1]{\ensuremath{\left\|#1\right\|}}
\newcommand{\abs}[1]{\left|#1\right|}
\newcommand{\absb}[1]{\bigl|#1\bigr|}
\def\Cof#1{\,\mathrm{Cof}\,#1}
\newcommand{\bn}{\boldsymbol{n}}
\newcommand{\tr}{\mathrm{tr}}
\newcommand{\bd}{\boldsymbol{d}}
\newcommand{\bsigma}{\boldsymbol{\sigma}}
\newcommand{\bK}{\ensuremath{\mathbf{K}}}
\def\ptl{{\partial}}
\newcommand{\bP}{{\mathbf P}}
\newcommand{\cW}{\mathcal{W}}
\newcommand{\bx}{\mathbf{x}}
\newcommand{\bu}{\mathbf{u}}
\newcommand{\bU}{\ensuremath{\mathbf{U}}}
\newcommand{\Div}{\nabla\cdot\,}
\newcommand{\eps}{\varepsilon}
\newcommand{\Grad}{\ensuremath{\mathrm{\nabla}}}
\newcommand{\N}{\ensuremath{\mathbb{N}}}
\newcommand{\Set}[1]{\ensuremath{\left\{#1\right\}}}
\newcommand{\Setb}[1]{\ensuremath{\bigl\{#1\bigr\}}}
\newcommand{\R}{\ensuremath{\Bbb{R}}}
\newcommand{\U}{\ensuremath{\Bbb{U}}}
\newcommand{\bX}{\ensuremath{\Bbb{X}}}
\newcommand{\bF}{\mathbf{F}}
\newcommand{\Iapp}{I_{\mathrm{app}}}
\newcommand{\Iion}{I_{\mathrm{ion}}}
\newcommand{\bv}{\mathbf{v}}
\newcommand{\bh}{\boldsymbol{h}}
\newcommand{\bD}{\mathbf{D}}
\newcommand{\ff}{\boldsymbol{f}}
\newcommand{\fft}{\boldsymbol{\tilde f}}
\newcommand{\fftn}{\boldsymbol{\tilde f^n}}
\newcommand{\ffte}{\boldsymbol{\tilde f_\eps}}
\newcommand{\bg}{\mathbf{g}}
\newcommand{\bC}{\mathbf{C}}
\newcommand{\bI}{\mathbf{I}}
\newcommand{\D}{\mathcal{D}}
\newcommand{\Dp}{\D^\prime}
\newcommand{\cO}{\mathcal{O}}
\newcommand{\Om}{\ensuremath{\Omega}}
\newcommand{\pt}{\ensuremath{\partial_t}}
\newcommand{\Iap}{I_{\mathrm{app}}}
\newcommand{\Ion}{I_{\mathrm{ion}}}
\newcommand{\dx}{\ensuremath{\, d\bx}}
\newcommand{\dt}{\ensuremath{\, dt}}
\newcommand{\dW}{\ensuremath{\, dW}}
\newcommand{\ds}{\ensuremath{\, ds}}
\newcommand{\En}{\ensuremath{\mathbf{1}}}
\newcommand{\cF}{\mathcal{F}}
\newcommand{\cG}{\mathcal{G}}
\newcommand{\cFt}{\ensuremath{\cF_t}}
\newcommand{\tcFt}{\ensuremath{\tilde\cF_t}}
\newcommand{\cS}{\ensuremath{\mathcal{S}}}
\newcommand{\cD}{\ensuremath{\mathcal{D}}}
\newcommand{\cDp}{\ensuremath{\mathcal{D}^\prime}}
\newcommand{\cL}{\ensuremath{\mathcal{L}}}
\newcommand{\cP}{\ensuremath{\mathcal{P}}}
\newcommand{\cB}{\ensuremath{\mathcal{B}}}
\newcommand{\E}{\ensuremath{\Bbb{E}}}
\newcommand{\cX}{\ensuremath{\mathcal{X}}}
\newcommand{\cZ}{\ensuremath{\mathcal{Z}}}
\newcommand{\cK}{\ensuremath{\mathcal{K}}}
\newcommand{\vphi}{\ensuremath{\varphi}}
\newcommand{\bM}{\ensuremath{\mathbf{M}}}
\newcommand{\weak}{\rightharpoonup}
\newcommand{\Span}{\operatorname{Span}}
\newcommand{\toR}{\xrightarrow{R\uparrow \infty}}
\newcommand{\ton}{\xrightarrow{n\uparrow \infty}}
\newcommand{\tok}{\xrightarrow{k\uparrow \infty}}
\newcommand{\tom}{\xrightarrow{m\uparrow \infty}}
\newcommand{\toeps}{\xrightarrow{\eps\downarrow 0}}
\newcommand{\weakn}{\xrightharpoonup{n\uparrow \infty}}
\newcommand{\weakeps}{\xrightharpoonup{\eps\downarrow 0}}
\newtheorem{thm}{Theorem}[section]
\newtheorem{lem}[thm]{Lemma}
\newtheorem{cor}[thm]{Corollary}
\newtheorem{prop}[thm]{Proposition}
\newtheorem{rem}[thm]{Remark}
\newtheorem{defi}[thm]{Definition}
\begin{document}

\title[Stochastic electromechanical bidomain model]
{Stochastic electromechanical bidomain model}

\author[M. Bendahmane]{M. Bendahmane}
\address[Mostafa Bendahmane]
{Institut de Math\'ematiques de Bordeaux UMR CNRS 525, 
Universit\'e Victor Segalen Bordeaux 2, F-33076 Bordeaux Cedex, France}
\email[]{\mailto{mostafa.bendahmane@u-bordeaux.fr}}

\author[K. H. Karlsen]{K.H. Karlsen}
\address[Kenneth H. Karlsen]
{Department of mathematics, University of Oslo, 
P.O. Box 1053,  Blindern, NO--0316 Oslo, Norway}
\email[]{\mailto{kennethk@math.uio.no}}

\author[F. Mrou\'e]{F. Mrou\'e}
\address[Fatima Mrou\'e]
{Center for Advanced Mathematical Sciences 
and Department of Mathematics, American University of Beirut, 
Beirut 1107 2020, Lebanon}
\email[]{\mailto{fm47@aub.edu.lb}}

\subjclass[2020]{Primary: 60H15, 35K57, 35M10; Secondary: 92C30, 92C10}

\keywords{Stochastic partial differential equation, reaction-diffusion system, 
degenerate, weak solution, martingale solution, existence, uniqueness, bidomain model, 
cardiac electric field, electro-mechanical coupling, linearized elasticity}

\thanks{The research conducted by M.~Bendahmane is supported 
by the FINCOME project.}

\date{\today}

\begin{abstract}
We analyze a system of nonlinear 
stochastic partial differential equations (SPDEs) 
of mixed elliptic-parabolic type that models the 
propagation of electric signals and their effect 
on the deformation of cardiac tissue. The system governs the 
dynamics of ionic quantities, intra and extra-cellular potentials, and 
linearized elasticity equations. We introduce a framework called 
the active strain decomposition, which factors the material gradient 
of deformation into an active (electrophysiology-dependent) part 
and an elastic (passive) part, to capture the coupling 
between muscle contraction, biochemical reactions, 
and electric activity. Under the assumption of linearized elastic behavior 
and a truncation of the nonlinear diffusivities, we propose 
a stochastic electromechanical bidomain model, 
and establish the existence of weak solutions for this model. 
To prove existence through the convergence 
of approximate solutions, we employ a stochastic compactness 
method in tandem with an auxiliary non-degenerate system 
and the Faedo--Galerkin method. We utilize a stochastic 
adaptation of de Rham's theorem to deduce the 
weak convergence of the pressure approximations.
\end{abstract}

\maketitle

\tableofcontents

\section{Introduction}

\subsection{Background}\label{subsec:background}

Our main interest lies in the mathematical investigation of the interplay 
between propagation of electrical potentials in cardiac tissue (under 
stochastic effects) and its elastic mechanical response. 
These distinct processes in cardiac function are 
intricately linked by several complex phenomena 
occurring at various spatio-temporal scales. 
At the macroscopic level, the bidomain equations describe 
the propagation of electrical potentials in the heart by conserving 
electrical fluxes between the extra- and intra-cellular domains, 
which are separated by a membrane functioning as a capacitor. 
The bidomain equations are influenced by differing conductivities 
within these domains, which also vary based on the orientation 
of the cardiac tissue fibers. A multi-continuum description of the heart 
can be obtained by homogenization arguments, wherein both 
constituents (intra- and extra-cellular potentials) coexist at 
each material point. Several references, such 
as \cite{Colli-Franzone:2014aa, Sundnes:2006aa}, 
describe the bidomain equations, whereas 
\cite{Bendahmane:2019tf, Collin:2018uv, Grandelius:2018aa, 
Neu:1993aa, Pennacchio:2005aa} provide insight 
into homogenization arguments for the bidomain model.

At the macroscopic level, muscle deformation can be 
described using the equations of motion for 
a hyperelastic material in its reference configuration. 
The muscle tissue itself is active, meaning 
that it can contract without external loads, but instead through 
intrinsic mechanisms that occur primarily at the microscale. 
There are different approaches to incorporating these effects, but 
one common method is to decompose stresses into active and passive components, 
resulting in what is known as the \textit{active stress} formulation. 
This approach has been widely applied in various fields, as 
seen in studies such as \cite{goktepe10,nash00,trayanova11}.

Another approach to incorporating the intrinsic mechanisms 
in muscle contraction is to use the \textit{active strain} 
formulation \cite{cherub,nardin}. 
This involves factoring the deformation gradient into active and passive 
components, with the active deformation representing the fiber contraction 
driven by the depolarization of the cardiomyocytes. 
This approach directly incorporates micro-level information on fiber 
contraction and direction into the kinematics, without the need for 
intermediate transcription in terms of stress. In this formulation, the 
strain energy function depends on auxiliary internal state variables that 
represent the level of tissue activation across scales \cite{rossi13}. 
We adopt the active strain approach in our work, but \cite{Ambrosi:2012aa,rossi12} 
provide comparisons between the two (active stress / active strain) approaches 
in terms of numerical implementation, constitutive issues, and stability.

The analysis of deterministic macroscopic cardiac models has been 
primarily focused on the study of solutions to the bidomain equations 
coupled with phenomenological or physiologically-based ionic models. 
A variational approach was first introduced in \cite{Colli-Franzone:2002aa} 
and has since been extended in various directions. 
These extensions include the nonlinear, degenerate  conductivity tensors \cite{Bendahmane:2006au}, more complex ionic models \cite{Bourgault:2009aa,Veneroni:2009aa}, and the existence of 
global classical solutions \cite{Matano:2011tj}. 
Additionally, further references are available in 
\cite{Andreianov:2010uq, Boulakia:2008aa, Colli-Franzone:2014aa, 
Giga:2018aa, Kunisch:2013aa}. 
The aforementioned works provide well-posedness results 
for the bidomain model, utilizing differing solution concepts 
and technical frameworks.

Existence theories for nonlinear elasticity can be found 
in \cite{ciarlet}, while applications of these theories to the particular 
case of hyperelastic materials and cardiac mechanics (and 
their discretizations) are explored in \cite{nash00}. Despite the extensive  
literature on numerical methods and models for cardiac 
electromechanics, including \cite{goktepe10,trayanova11}, rigorous 
studies on the solvability and stability of solutions are still lacking.

The aim of this paper is to explore \textit{stochastic} 
electromechanical coupling by considering the dependence of 
the electrical potentials, governed by the stochastic bidomain 
equations of \cite{Bendahmane:2018aa}, on 
the deformation gradient. This dependence arises after 
transforming from Eulerian to Lagrangian coordinates and by virtue of 
the Piola identity. In turn, the active part of the 
deformation is assumed to incorporate the influence of calcium kinetics 
into the equations of linear structural mechanics. 
This modeling approach enables coupling in the 
opposite direction (feedback), achieving stochastic 
electromechanical coupling.

This work improves the stochastic bidomain model for cardiac 
electrophysiology \cite{Bendahmane:2018aa} by 
incorporating the mechanical activity of the heart. By doing so, the 
resulting model gains complexity in its mathematical analysis and 
becomes more realistic and applicable from a practical standpoint.  
To achieve a realistic model, it is crucial to blend microscale 
effects into the model that capture the 
heart's electrical and mechanical behaviors. Our approach 
incorporates ``current noise" into the bidomain equations to model 
the stochastic behavior of ion channels on cellular voltage dynamics. 
Furthermore, we include ``subunit noise" into the 
equations governing the gating variables, capturing 
the stochastic transitions of ion channels between 
different states \cite{Goldwyn:2011aa}.  
The model considers the mechanical contractions of the heart, which 
has a feedback effect on the electrical activity by changing the 
electrical conductivity due to tissue strain \cite{cherub}. 
This necessitates coupling the stochastic 
bidomain equations with the elasticity equations, 
showcasing the complex interaction between the 
electrical signals of the heart and its mechanical actions.

Realistic models play a crucial role in bridging the 
gap between organ-level phenomena and their biophysical underpinnings. 
They also furnish clinicians with quantitative 
insights for refining diagnostic, prognostic, and therapeutic strategies
\cite{quarteroni2022modeling,biglino2016computational,niederer2019computational}. 
A case in point is electrocardiography, where the traditional analysis 
of ECG (electrocardiogram) and BSPM (body surface potential mapping) data, 
based on \textit{static} cardiac models, ignores the heart's 
dynamic contractions and the influence of random effects. 
This limitation can lead to inaccuracies in interpreting 
electrical signals. Incorporating heart contractions and stochastic factors 
into our model, as \cite{moss2021fully} suggests, allows for a 
more dynamic and accurate mapping of electrical potentials, 
enhancing our understanding of cardiac functions 
beyond the constraints of static models.

We will revisit the stochastic bidomain model later in this introduction, but for 
now, let us focus on the elasticity problem.

\subsection{Nonlinear elasticity problem}\label{sub:elasticity}

We model the heart as a homogeneous continuous material 
occupying a bounded domain $\cO_R\subset\mathbb{R}^d$ 
with Lipschitz continuous boundary $\partial\cO_R$, where $d=3$. 
The deformation of the heart is described by the equations of motion 
written in the reference configuration $\cO_R$, and the 
current configuration is denoted by $\cO$. 
The goal is to determine the deformation field 
$\varphi:\cO_R\to\mathbb{R}^d$, which maps a 
material particle initially located at position $X$ to 
its current position $\bx:=\varphi(X)$. The tensor gradient of deformation 
is given by $\bF:=\bD_X\varphi$, where $\bD_X$ denotes the 
gradient operator with respect to the material coordinates $X$, noting 
that $\text{det}(\bF)>0$.

We assume that the cardiac tissue is a hyperelastic 
incompressible material, and as such, there exists a strain 
stored energy function $\cW=\cW(X,\bF)$, which is differentiable 
with respect to $\bF$. This function allows us to obtain 
constitutive relations between strains and stresses. 
Notably, we assume the incompressibility 
of the material, which is enforced via a scalar Lagrange 
multiplier $p$ associated with the incompressibility constraint, 
interpreted as "hydrostatic pressure". By assuming this constraint, 
we minimize the total elastic energy subject to the condition $\det(\bF) =1$. 
Furthermore, the first Piola stress tensor $\bP$, which 
represents force per unit undeformed surface, is given by:
$$
\bP=\frac{\partial\cW}{\partial(\bF)}-p\Cof(\bF),
$$
where $\Cof(\cdot)$ is the cofactor matrix.

We seek to find the deformation field $\varphi$ and Lagrange multiplier $p$ 
that satisfy the balance equations for deformations and pressure given by
\begin{equation}\label{mechanical-strong}
	\Div \bP(\bF,p)=\mathbf{g} \quad \text{in $\cO_R$},
	\qquad
	\det (\bF) =1 \quad \text{in $\cO_R$},
\end{equation}
where $\mathbf{g}$ is a prescribed body force, and $\bn$ represents 
the unit outward normal vector to $\partial\cO_R$. 
We complete these equations with the Robin boundary condition
\begin{equation}\label{bc}
	\bP\bn=-\alpha\varphi\quad 
	\text{on }\partial\cO_R,
\end{equation}
where $\alpha>0$ is a constant parameter. 
Our choice of boundary condition in \eqref{bc} can be 
adjusted to replicate the global motion 
of the cardiac muscle~\cite{rossi13}.

To obtain a precise form of the equation in \eqref{mechanical-strong}, we 
require a particular constitutive relation defining $\cW$. 
Our study is limited to Neo-Hookean materials, where 
$$
\cW=\frac{1}{2}\mu\tr\left[\bF^{T}\bF-\bI\right],
$$
which gives $\frac{\partial \cW}{\partial \bF}=\mu \bF$ 
and $\bP=\mu\bF-p\Cof(\bF)$, where $\mu$ 
is an elastic modulus.

While simplified, this description of the passive response of the 
muscle exhibits a non-linear strain-stress relationship arising from 
the incompressibility constraint. Another form of strain-stress 
non-linearity results from the anisotropy inherited from the active 
strain incorporation (see, e.g., \cite{nash00,rossi13,rossi12} 
for more information).

\subsection{Stochastic bidomain model}\label{subsec:stoch-bidomain}

The deterministic bidomain equations, first introduced in \cite{Tung:1978aa} 
and further discussed in the books \cite{Colli-Franzone:2014aa, Sundnes:2006aa}, 
are derived from the application of Ohm's electrical conduction law 
and the continuity equation, which ensures the conservation of 
electrical charge, to both the intracellular and extracellular domains.
Recently, a stochastic extension of the bidomain equations 
has been proposed in \cite{Bendahmane:2018aa}. 
In this work, we will refer to these equations as 
the stochastic bidomain model (or equations).

Let $T>0$ be a fixed final time, and let $\cO \subset \R^3$ be 
a bounded open subset that represents the heart. We make the 
assumption that the intracellular and extracellular current densities can 
be expressed in terms of potentials $v_i=v_i(t,\bx)$ and $v_e=v_e(t,\bx)$, 
respectively, at points $(t,\bx) \in \cO_T:=(0,T)\times \cO$. 
We define the transmembrane potential and the gating 
or recovery variable as $v = v(t,\bx):=v_i-v_e$ 
and $w=w(t,\bx)$, respectively. 

In the global coordinate system, 
cardiac electrical conductivity is represented by the orthotropic tensors
$$
\bK_k(\bx)=\sigma_k^l\bd_l\otimes\bd_l+\sigma_k^t
\bd_t\otimes\bd_t+\sigma_k^n\bd_n\otimes\bd_n,
\quad k \in \Set{e,i},
$$
where $\sigma_k^s = \sigma_k^s(\bx)\in \boldsymbol{C}^1(\R^3)$ 
for $k \in \Set{e,i}$ and $s \in \Set{l,t,n}$, denote the intra- and 
extracellular conductivities along the directions of the fibers. 
Here, $\bd_s = \bd_s(\bx)$ for $s \in \Set{l,t,n}$, represents 
the direction of the fibers, which is a local quantity.

The stochastic bidomain equations \cite{Bendahmane:2018aa} 
take the following form:
\begin{equation}\label{eq:veq-noise}
	\begin{split}
		& \chi_m c_m dv
		-\nabla \cdot \left(\bK_i \nabla v_i\right)\dt 
		+\chi_m \Iion(v,w)\dt=\Iap \dt+\beta_v(v) \dW^v,
		\\
		&\chi_m c_m dv
		+\nabla \cdot \left(\bK_e \nabla v_e\right)\dt 
		+\chi_m \Iion(v,w)\dt=-\Iap\dt+\beta_v (v) \dW^v,
		\\
		& dw  = H(v,w)\dt + \beta_w(w)\dW^w, 
		\qquad (t,\bx) \in \cO_T,
	\end{split}
\end{equation}
where $c_\mathrm{m}$ and $\chi$ are model parameters, 
and the externally applied stimulation current is represented 
by the function $\Iap$.

In \eqref{eq:veq-noise}, $W^\kappa$ is a cylindrical Wiener process 
with a noise amplitude $\beta_\kappa$ for $\kappa = v, w$. 
The multiplicative stochastic term $\beta_v(v) \dW^v$ added to the equations 
for the membrane potential $v$ is commonly referred to as \textit{current noise}, 
and it represents the aggregated effect of the random activity of 
ion channels on the voltage dynamics. On the other hand, the 
gating variable $w$ represents the fraction of open channel 
subunits of varying types, and the multiplicative 
noise $\beta_w(w)\dW^w$ added to the $w$-equation 
is sometimes referred to as \textit{subunit noise}.  
For a discussion of different ways of adding noise to 
the Hodgkin-Huxley equations, we refer 
readers to \cite{Goldwyn:2011aa}.

We complete the SPDE system \eqref{eq:veq-noise} with 
homogeneous Neumann boundary conditions for all fields. 
It is worth noting that the choice of the membrane model 
to be used is reflected in the functions $H(v, w)$ and $\Ion(v, w)$. 
For a phenomenological description of the action potential, the 
FitzHugh-Nagumo model \cite{Nagumo,FitzHugh:1961aa}, given by 
assumption ({\bf A.4}) below, will be used this paper.

The available literature on the mathematical theory of 
the stochastic bidomain system \eqref{eq:veq-noise} 
is currently limited. The work \cite{Bendahmane:2018aa} 
confirmed the global well-posedness of weak solutions for multiplicative noise. 
For strong well-posedness results, assuming additive noise, 
we refer to the works \cite{Hieber:2020vk} 
and \cite{Kapustyan:2022aa}.

\subsection{Stochastic electromechanical equations}
\label{sub:stoch-electromech}

The \textit{active strain model} \cite{cherub} is used to couple the 
electrical and mechanical behaviors. This model factorizes the 
deformation (gradient) tensor $\mathbf{F}$ into a 
passive component $\mathbf{F}_p$ at the tissue level and 
an active component $\mathbf{F}_a$ at the cellular level 
such that $\mathbf{F} = \mathbf{F}_p\mathbf{F}_a$. 
The tensor $\mathbf{F}_p$ accounts for the deformation of 
the material to ensure compatibility and possible tension due to external loads. 
The active tensor $\mathbf{F}_a$ represents the distortion that 
dictates deformation at the fiber level and is assumed to 
depend directly on the electrophysiology, as described in \cite{aanq}:
$$
\mathbf{F}_a=\bI+\gamma_l \bd_l\otimes \bd_l
+\gamma_t \bd_t\otimes \bd_t+\gamma_n \bd_n\otimes \bd_n,
$$
where $\gamma_\kappa$, $\kappa \in \Set{l,t,n}$ are 
quantities that depend on the electrophysiology equations.

The active strain approach used for decomposing the deformation 
tensor $\bF$ assumes the existence of a virtual intermediate configuration 
located between the reference and the current frames. 
In this configuration, the strain energy function depends only 
on the macroscale deformation $\bF_p$, as expressed in \cite{aanq}:
$$
\cW=\cW(\bF_p)=\cW(\bF\bF^{-1}_a)
=\dfrac{\mu}{2}\textrm{tr}\left[\bF^T_p\bF_p-\mathbf{I}\right]
=\dfrac{\mu}{2}\textrm{tr}\left[\bF^{-T}_a\bF^T\bF\bF^{-1}_a
-\mathbf{I}\right].
$$
Moreover, the strain energy can be rewritten such 
that the Piola stress tensor becomes
$$
\mathbf{P}=\mu \mathbf{F} \mathbf{C}_a^{-1}
-p\textrm{Cof}(\mathbf{F}),
$$
where $\mathbf{C}_a^{-1}:=\textrm{det}(\mathbf{F}_a)
\mathbf{F}_a^{-1}(\mathbf{F}_a^{-\textrm{T}})$, see \cite{aanq}. 

It should be noted that mechanical activation is primarily influenced 
by the release of intracellular calcium~\cite{nordsletten11}. 
Furthermore, local strain dynamics closely follow calcium release 
rather than transmembrane potential. Physiologically-based 
activation models require a dependence of $\gamma_\kappa$ not 
only on calcium, but also on local stretch, local stretch rate, sliding 
velocity of crossbridges, and other force-length experimental 
relationships \cite{rossi13}. However, for simplicity, we will 
restrict ourselves to a phenomenological description of local activation 
in terms of gating variables, where the scalar fields $\gamma_l$, $\gamma_t$, 
and $\gamma_n$ are functions of a common scalar $\gamma$ as expressed in 
\begin{equation*}
	\gamma_{\kappa}=\gamma_{\kappa}(\gamma) 
	\quad \text{for $\kappa \in \Set{l,t,n}$}.
\end{equation*}
Here, each $\gamma_{\kappa}:\R\mapsto [-\Gamma_{\kappa},0]$ 
is a Lipschitz continuous monotone function and 
$\Gamma_{\kappa}$ is small enough to ensure that $\det(\bF_a)$ 
stays uniformly bounded away from zero for $\gamma\in \R$ and 
$\kappa \in \Set{l,t,n}$. More explicitly, the functions 
$\gamma_{\kappa}$ are assumed to have the form:
\begin{equation*}
	\gamma_{\kappa}(\gamma) = 
	-\Gamma_{\kappa}\frac{2}{\pi}
	\arctan\left(\gamma^+/\gamma_R\right),
	\quad \kappa\in \Set{l,t,n},
\end{equation*}
where $\gamma^+:=\max(0,\gamma)$ and $\gamma_R$ 
is a reference value. The scalar field $\gamma$ is governed by 
the ordinary differential equation
$$
\pt \gamma -G(\gamma,w)  =0 
\quad \text{ in $\cO_T$}, 
\quad 
G(\gamma,w)=\eta_1\left(\beta w-\eta_2\gamma\right),
$$
where $\beta,\eta_1,\eta_2\ge 0$ are physiological parameters. 

In order to couple mechanical and electrical behavior, the stochastic 
bidomain equations are modified by changing variables from the 
current configuration (Eulerian coordinates) to the reference 
configuration (Lagrangian coordinates). 
This modification introduces a conduction term 
that depends on the deformation gradient $\bF$. As a result, the 
stochastic electromechanical activity in the heart is 
governed by the following system of coupled random elliptic 
equations and stochastic degenerate parabolic 
reaction-diffusion equations, along with 
two differential equations:
\begin{equation}\label{coupbid}
	\begin{aligned}
		& -\Div \bigl( a(\bx,\gamma,\bF,p) \bigr)=\mathbf{g}, 
		\quad \det(\bF) =1
		&& \text{in $\cO_R$, $t\in (0,T)$},
		\\
		& \chi_m c_m dv -\nabla \cdot 
		\left(\bM_i(\bx,\bF)\Grad v_i\right)\dt
		+\chi_m \Iion(v,w)\dt
		\\ 
		& \qquad\qquad\qquad\qquad \qquad
		=\Iap \dt+\beta_v(v) \dW^v 
		&& \text{in }{\cO}_{T,R}, 
		\\
		& \chi_m c_m dv+\nabla \cdot 
		\left(\bM_e(\bx,\bF)\Grad v_e\right)\dt 
		+\chi_m \Iion(v,w)\dt
		\\ 
		& \qquad\qquad \qquad\qquad\qquad
		=-\Iap \dt+\beta_v(v) \dW^v 
		&& \text{in $\cO_{T,R}$}, 
		\\ 
		& dw  = H(v,w)\dt + \beta_w (v)\dW^w, 
		\quad \pt \gamma-G(\gamma,w)=0 
		&& \text{in $\cO_{T,R}$},
	\end{aligned}
\end{equation}
along with the algebraic identity $v_i-v_e=v$ 
in $\cO_{T,R}$. Here, $\cO_{T,R}:=
(0,T)\times\cO_R$ and $\cO_R$ 
is the reference configuration defined in Section \ref{sub:elasticity}. 

In \eqref{coupbid}, 
\begin{equation}\label{eq:flux-derived}
	a(\bx,\gamma,\bF,p) =\mu \bF\bC_a^{-1}(\bx,\gamma)
	-p\Cof(\bF)
\end{equation}
and
\begin{equation*}
	\bM_j(\bx,\bF)
	=(\bF)^{-1}\bK_k(\bx)(\bF)^{-T}, 
	\quad j=i,e,
\end{equation*}
where $\bC_a^{-1}$ is the tensor obtained from the 
active factor of the deformation gradient. To complete the 
system of equations \eqref{coupbid}, we need to specify suitable 
initial conditions for $v$, $w$ and $\gamma$, as well as 
boundary conditions for $v_i,v_e$ and 
the elastic flux $a(\cdot,\cdot,\cdot,\cdot)$.\\

\subsection{Linearizing the mechanical equations}
\label{subsec:linear}

To facilitate the mathematical analysis of the stochastic electromechanical
system \eqref{coupbid}, we employ a linearization technique 
for both the incompressibility condition $\textrm{det}(\mathbf{F})=1$ 
and the flux in the equilibrium equation. 

Specifically, we use the following approach for the determinant:
\begin{align*}
	\textrm{det}(\mathbf{F}) &= \textrm{det}(\bI)
	+\dfrac{\partial(\textrm{det})}{\partial\mathbf{F}}(\bI)
	\left(\mathbf{F}-\bI\right)
	+o\left(\mathbf{F}-\bI\right) 
	\\ & = 1+\textrm{tr}(\mathbf{F}-\bI)
	+o\left(\mathbf{F}-\bI\right).
\end{align*}
Since $\textrm{det}(\mathbf{F})=1$, we can use the approximations
$$
\textrm{tr}(\mathbf{F}-\bI)\approx 0 
\quad
\nabla\cdot\boldsymbol{\phi}
=\textrm{tr}(\mathbf{F})
\approx\textrm{tr}(\bI)=n.
$$

We can express the displacement $\bu$ as the difference 
between the current position $\boldsymbol{\phi}(X)$ and 
the reference position $X$. With this notation, the linearized 
incompressibility condition takes the form $\nabla\cdot\bu=0$. 
To linearize the flux in equation \eqref{eq:flux-derived} with 
respect to $\bF$, we employ a Taylor expansion of $\textrm{Cof}(\bF)$ 
around the identity matrix $\mathbf{I}$:
\begin{align*}
	\textrm{Cof}(\bF) & =\textrm{Cof}(\bI)
	+\dfrac{\partial\textrm{Cof}}{\partial\mathbf{\bF}}(\bI)
	\left(\bF-\bI\right)+o\left(\bF-\bI\right)
	\\ & = \bI+\textrm{tr}\left(\bF-\bI\right)\bI
	-\left(\bF-\bI\right)^T+o\left(\bF-\bI\right).
\end{align*}
As a result, we arrive at
\begin{equation*}
	a(\bx,\gamma,\bF,p)
	=\mu \bF \bC_a^{-1}(\bx,\gamma)-p\bI.
\end{equation*}

We can use the displacement gradient $\Grad\bu$ 
to write the first equation in \eqref{coupbid} as
$$
-\Div \bigl((\mathbf{I}+\Grad\bu)
\sigma(\bx,\gamma)\bigr) +\Grad p=\mathbf{g},
\quad 
\sigma(\bx,\gamma):=\mu\bC_a^{-1}(\bx,\gamma).
$$
Therefore, we can reformulate the last equation 
to obtain a Stokes-like equation of the form
$$
-\Div \bigl(\Grad\bu\,\sigma(\bx,\gamma)\bigr) 
+ \Grad p=\ff(t,\bx,\gamma),
$$
where 
\begin{equation}\label{def:ff}
	\ff(t,\bx,\gamma)=\Div \sigma(\bx,\gamma)+\mathbf{g}.
\end{equation}

\subsection{Problem to be solved and main result}
\label{subsec:main-problem}

Gathering the relevant equations from the previous subsections,
we obtain the stochastic electromechanical bidomain model that 
is proposed in this paper. The model combines the electrical activity of the heart 
and its mechanical deformation. We will prove a global 
existence result in the upcoming sections. To state our main global 
existence result, we now present the final form of the model. 

For simplicity of notation, we use $\cO$ and $\cO_T$ to 
denote $\cO_R$ and $\cO_{T,R}$, respectively, 
throughout the rest of the paper. We also redefine the 
conductivities $\bM_{i,e}$ to
$\frac{1}{\chi_m c_m}\bM_{i,e}$ 
and $\Iion$, $\Iap$, $\beta_v$ to 
$\frac{1}{c_m} \Iion$, $\frac{1}{\chi_m c_m} \Iap$, 
$\frac{1}{\chi_m c_m} \beta_v$.

\medskip

\noindent The final form of \textit{stochastic 
electromechanical bidomain model} is
\begin{equation}\label{S1}
	\boxed{
	\begin{aligned}
		&-\Div \bigl( \nabla\bu \sigma (\bx,\gamma)\bigr)
		+\Grad p =\ff(t,\bx,\gamma), 
		\quad \Div \bu =0
		&& \text{in $\cO$, $t\in (0,T)$},
		\\
		&dv -\nabla \cdot 
		\left(\bM_i(\bx,\nabla\bu)\Grad v_i\right)\dt
		+\Iion(v,w)\dt
		\\ 
		& \qquad\qquad\qquad\qquad \qquad\qquad
		=\Iap \dt+\beta_v(v) \dW^v 
		&& \text{in }{\cO}_T, 
		\\
		& dv+\nabla \cdot 
		\left(\bM_e(\bx,\nabla\bu)\Grad v_e\right)\dt 
		+\Iion(v,w)\dt
		\\ 
		& \qquad\qquad \qquad\qquad\qquad\qquad
		=-\Iap \dt+\beta_v(v) \dW^v 
		&& \text{in $\cO_T$}, 
		\\ 
		& dw  = H(v,w)\dt + \beta_w (v)\dW^w, 
		\quad \pt \gamma-G(\gamma,w)=0 
		&& \text{in $\cO_T$},
	\end{aligned}}
\end{equation}
along with the algebraic relation $v_i-v_e=v$ in $\cO_T$.

\medskip
The system \eqref{S1} is fully specified by the inclusion 
of boundary conditions, which includes the linearization of the 
condition \eqref{bc}:
\begin{equation}\label{S-bc}
	\begin{split}
		& \nabla\bu \sigma (\bx,\gamma)\,\bn 
		-p\bn=-\alpha \bu 
		\quad \text{on $\ptl\cO$, $t\in (0,T)$},
		\\ & \left(\bM_{k}(\bx,\nabla\bu)\Grad v_{k}\right)\cdot \bn =0 
		\quad \text{on $(0,T)\times \ptl\cO$, $k\in \Set{i,e}$},
	\end{split}
\end{equation}
for some $\alpha>0$. By selecting appropriate 
boundary conditions \eqref{S-bc}, it becomes 
feasible to impose the compatibility constraint \eqref{compat1} mentioned 
below, among other benefits.

We also impose the following initial conditions:
\begin{equation}\label{S-init}
	v(0,\cdot)=v_0,
	\quad w(0,\cdot)=w_0,
	\quad 
	\gamma(0,\cdot)=\gamma_0
	\quad \text{in $\cO$}.
\end{equation}

The following list of assumptions made in the model 
described by \eqref{S1}, \eqref{S-bc}, and \eqref{S-init} 
play a crucial role in the subsequent analysis:

\medskip

\noindent {\bf (A.1)} $\sigma(\bx,\gamma)$ is 
a symmetric, uniformly bounded, and positive definite tensor:
\begin{align*}
	& \text{$\exists c>0$ such that for 
	a.e.~$\bx\in\cO$, $\forall\gamma\in\R$, 
	$\forall \bM\in\mathbb{M}_{3\times3}$},
	\\ & \qquad \frac{1}{c}\abs{\bM}^2
	\leq \bigl(\sigma(\bx,\gamma)\bM\bigr):\bM 
	\leq c \abs{\bM}^2;
\end{align*}
Furthermore, the function $\sigma(\cdot,\cdot)$ is in the 
class $C^1(\bar{\cO}\times \mathbb{R})$. Meanwhile, 
the function $\bg$, see \eqref{def:ff}, is an 
element of $(L^2(\cO_T))^3$.

\medskip

\noindent {\bf (A.2)} $\bM_{i,e}(\bx,\bM)$ are symmetric, 
uniformly bounded and positive definite matrices:
\begin{align*}
	& \text{$\exists c>0$ such that for a.e.~$\bx\in\cO$, 
	$\forall \bM\in \mathbb{M}_{3\times 3}$, 
	$\forall \xi\in\mathbb{R}^3$}, 
	\\ & \qquad \frac{1}{c} \abs{\xi}^2
	\leq \bigl(\bM_{i,e}(\bx,\bM)\xi\bigr)
	\cdot \xi \leq c \abs{\xi}^2;
\end{align*}
Besides, the maps $\bM \mapsto \bM_{i,e}(\bx,\bM)$ are 
Lipschitz continuous, uniformly in $\bx$.

\medskip

\noindent {\bf (A.3)} The function $G$ is given by 
$G(\gamma,w)=\eta_1\bigl(\beta w-\eta_2\gamma\bigr)$ 
with $\beta,\eta_1,\eta_2>0$.

\medskip

\noindent {\bf (A.4)}  The functions $H$, $\Ion$ are given 
by the generalized FitzHugh-Nagumo kinetics:
$$
\Ion(v,w)=I_1(v)+I_2(v)w,\qquad H(v,w)=h(v)+c_{H,1} w,
$$
where $I_1,I_2, h\in C^1(\R)$ and for all $v\in \R$,
\begin{align*}
	& \abs{I_1(v)}\le c_{I,1}\left(1+ \abs{v}^3\right), 
	\quad 
	I_1(v)v \ge \underline{c}_I \abs{v}^4 - c_{I,2} \abs{v}^2,
	\\ & I_2(v)=c_{I,3}+c_{I,4}v, \quad 
	\abs{h(v)}^2\le c_{H,2}\left(1 + \abs{v}^2\right),
\end{align*}
for some positive constants $c_{I,1},c_{I,2},c_{I,3},c_{I,4}, 
c_{H,1},c_{H,2}$, and $\underline{c}_I>0$. 
Moreover, there exist positive constants $\mu$, $C$ such that 
\cite[p.~478--479]{Bourgault:2009aa}
\begin{equation}\label{ineq:dissipative}
	\begin{split}
		& \mu\bigl( \Ion(v_2,w_2)-\Ion(v_1,w_1) \bigr)
		\left(v_2-v_1\right)
		\\ & \quad 
		-\bigl( H(v_2,w_2)-H(v_1,w_1) \bigr) \left(w_2-w_1\right)
		\\ & \quad\quad 
		\geq -C \min\bigl(1,\mu^{-1}\bigr)
		\bigl(\mu \abs{v_2-v_1}^2+\abs{w_2-w_1}^2\bigr),
		\quad\forall v_1,v_2,w_1,w_2 \in\R.
	\end{split}
\end{equation}

\medskip

\noindent {\bf (A.5)}  The following combatibility condition holds
\begin{equation}\label{compat1}
	\int_\cO v_e(t,\bx)\dx=0 
	\quad \text{for a.e.~$t\in(0,T)$}.
\end{equation}

To present our main result, we need to establish some 
additional assumptions about the stochastic data of the system \eqref{S1}. 
Specifically, we will assume the existence of a complete probability 
space $(\Omega,\cF, P)$, which is equipped with a complete 
right-continuous filtration $\Set{\cF_t}_{t\in [0,T]}$. 
Consider a sequence $\Set{W_k}_{k=1}^\infty$ 
of independent finite-dimensional Brownian motions adapted 
to the filtration $\Set{\cF_t}_{t\in [0,T]}$. We will refer to the 
collection of all these objects as the 
(Brownian) \textit{stochastic basis}, denoted by
\begin{equation*}
	\cS=\left(\Omega,\cF,\Set{\cFt}_{t\in [0,T]},P,
	\Set{W_k}_{k=1}^\infty\right).
\end{equation*}

All relevant processes are defined on a stochastic basis $\cS$ and 
they are assumed to be measurable (predictable) with respect to the 
filtration $\Set{\cFt}_{t\in [0,T]}$. For those who need background 
information on stochastic analysis and SPDEs, including 
stochastic integrals, It\^o's chain rule, and martingale inequalities, we 
recommend consulting \cite{DaPrato:2014aa,Prevot:2007aa}.
To learn about key concepts related to probability measures 
(on topological spaces), weak compactness and tightness, 
we suggest reading \cite{Bogachev-measures:2018}.
For a primer on quasi-Polish spaces and the Skorokhod--Jakubowski 
representation theorem \cite{Jakubowski:1997aa}, readers can consult \cite{Brzezniak:2013ab,Brzezniak:2016wz,Ondrejat:2010aa}.
For properties of Bochner spaces, such as 
$L^r(\Omega;X)= L^r\bigl(\Omega,\cF,P;X\bigr)$, where $X$ 
is a Banach space and $r\in [1,\infty]$, we 
suggest consulting \cite{BanachI:2016}.

Fixing a Hilbert space $\U$ equipped with a complete orthonormal 
basis $\Set{\psi_k}_{k\geq 1}$, we always consider cylindrical Brownian 
motions $W$ evolving over $\U$. Specifically, we define 
$W:=\sum_{k\geq 1} W_k\psi_k$, where $\Set{W_k}_{k\geq 1}$ 
are independent standard Brownian motions on $\mathbb{R}$. 
The vector space of all bounded linear operators from $\U$ to a 
separable Hilbert space $\bX$ with inner product $(\cdot,\cdot)_{\bX}$ 
and norm $\abs{\cdot}_{\bX}$ is denoted by $L(\U,\bX)$. 
In particular, we are interested in the collection of 
Hilbert-Schmidt operators from $\U$ to $\bX$, 
denoted by $L_2(\U,\bX)$. 

For the stochastic electromechanical 
bidomain system \eqref{S1}, a natural choice for $\bX$ is the 
separable Hilbert space $L^2(\cO)$:
$$
\bX=L^2(\cO).
$$
 To ensure that the 
infinite series $\sum_{k\geq 1} W_k\psi_k$ converges, we can 
construct an auxiliary Hilbert space $\U_0$ that contains $\U$, and 
a Hilbert-Schmidt embedding $J$ from $\U$ to $\U_0$. 
With this construction, the 
series converges in the space $L^2\bigl(\Omega;
C([0,T];L_2(\U_0,L^2(\cO)))\bigr)$ \cite{DaPrato:2014aa}.

Considering a cylindrical Wiener process $W$ 
with noise coefficient $\beta$, the stochastic integral $\int \beta\,dW$ is 
defined in the sense of It\^{o} as follows \cite{DaPrato:2014aa}:
\begin{equation*}
	\int_0^t \beta \dW
	=\sum_{k=1}^\infty \int_0^t 
	\beta_{k} \dW_{k}, 
	\qquad \beta_{k} := \beta\psi_k, 
\end{equation*}
for any predictable integrand $\beta$ satisfying 
$$
\beta \in L^2\bigl(\Omega,\cF,P;
L^2(0,T;L_2(\U,L^2(\cO)))\bigr).
$$
The stochastic integral $\int_0^{\cdot} \beta \dW$ 
is a predictable process in $L^2(\Omega\times [0,T];L^2(\cO))$. 
We will write $\int_0^t \int_{\cO}  \beta \vphi \dx \dW(s)$
instead of $\left(\int_0^t\beta \dW(s),
\vphi \right)_{L^2(\cO)}$, for any $\vphi=\vphi(\bx)\in L^2(\cO)$.

This explains how we interpret the stochastic integrals 
in \eqref{S1}, with $(\beta,\beta_k, W,W_k)$ replaced by
$(\beta_v(v),\beta_{v,k}(v),W^v,W^v_k)$ and 
$(\beta_w(v),\beta_{w,k}(v),W^w,W^w_k)$.

\medskip

Next, we gather the conditions that need to be 
imposed on the noise coefficients.

\medskip

\noindent {\bf (A.6)}  For each $z\in L^2(\cO)$ and 
$u=v,w$, we define $\beta_u(z):\U\to L^2(\cO)$ by
$$
\beta_u(z)\psi_k=\beta_{u,k}(z(\cdot)),  
\quad k\ge 1,
$$
for some real-valued functions $\beta_{u,k}(\cdot):\R\to \R$, 
$\beta_{u,k}(\cdot)\in C^1$, verifying 
\begin{equation}\label{eq:noise-cond}
	\begin{split}
		& \sum_{k=1}^\infty \abs{\beta_{u,k}(z)}^2 \le 
		C_\beta \left(1+ \abs{z}^2\right), 
		\qquad \forall z\in \R,
		\\ 
		& \sum_{k=1}^\infty 
		\abs{\beta_{u,k}(z_1)-\beta_{u,k}(z_2)}^2 
		\le C_\beta\abs{z_1-z_2}^2, 
		\qquad \forall z_1, z_2\in \R,
	\end{split}
\end{equation}
for a constant $C_\beta>0$. As a result  
of \eqref{eq:noise-cond}, one has 
\begin{equation}\label{eq:noise-cond2}
	\begin{split}
		&\norm{\beta_u(z)}_{L_2\left(\U,L^2(\cO)\right)}^2
		\le C_\beta \left(1 + \norm{z}_{L^2(\cO)}^2\right), 
		\quad z\in L^2(\cO),
		\\ & 
		\norm{\beta_u(z_1)
		-\beta_u(z_2)}_{L_2\left(\U,L^2(\cO)\right)}^2
		\le C_\beta \norm{z_1 - z_2}_{L^2(\cO)}^2, \quad
		z_1, z_2 \in L^2(\cO).
	\end{split}
\end{equation}

\medskip

The following theorem encapsulates our main global 
existence result for the stochastic electromechanical bidomain model. 
However, we will postpone the exact definition of 
the solution concept until Section \ref{sec:defsol}.

\begin{thm}[global existence of solution]\label{thm:martingale}
Assume that the conditions {\bf (A.1)}--{\bf (A.6)} are satisfied. 
Given a stimulation current $\Iapp\in L^2\left(\Omega;L^2(\cO_T)\right)$
and (initial) probability measures $\mu_{v_0}$ on $L^2(\cO)$ 
and $\mu_{w_0}, \mu_{\gamma_0}$ on $H^1(\cO)$, which 
satisfy the following moment conditions for some $q_0 > \frac{9}{2}$:
\begin{equation}\label{eq:moment-est}
	\int_{L^2(\cO)} 
	\norm{z}^{q_0}_{L^2(\cO)}
	\, d\mu_{z_0}(z)<\infty, \quad 
	\text{ for $z=v,w,\gamma$},
\end{equation}
the stochastic electromechanical bidomain model \eqref{S1}---with initial data  
\eqref{S-init} (where $v_0\sim\mu_{v_0}$, $w_0\sim \mu_{w_0}$, 
$\gamma_0\sim \mu_{\gamma_0}$) and boundary 
conditions \eqref{S-bc}---will have a weak 
martingale solution in the sense of Definition \ref{def:martingale-sol}.
\end{thm}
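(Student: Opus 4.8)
The plan is to realize the solution as the limit of a two-parameter family of approximations: an inner Faedo--Galerkin discretization (index $n$) of an outer non-degenerate regularization (parameter $\eps$) of the system \eqref{S1}. The regularization renders the two potential equations individually uniformly parabolic --- for instance by adding $-\eps\Delta v_i$ and $-\eps\Delta v_e$ together with homogeneous Neumann conditions --- while leaving the elliptic Stokes-type block and the $w,\gamma$ equations untouched. For the Galerkin step I would fix a divergence-free basis of the displacement space (so that the pressure $p$ drops out at the discrete level) together with an $L^2(\cO)$-orthonormal basis of Neumann--Laplacian eigenfunctions for $v_i,v_e,w$, and enforce the compatibility constraint \eqref{compat1} by projecting $v_e$ onto the orthogonal complement of the constants; the initial laws are replaced by their finite-dimensional projections, which still verify the moment bound \eqref{eq:moment-est}. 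By \textbf{(A.1)} the elliptic equation defines, pathwise, a solution operator $\gamma\mapsto\bu[\gamma]$ that is Lipschitz from $L^2(\cO)$ into $H^1(\cO)$; substituting it, the Galerkin problem becomes a finite-dimensional It\^o SDE for the coefficients of $(v_i,v_e,w)$ coupled to the ($\bx$-pointwise) ODE for $\gamma$, whose coefficients are locally Lipschitz by $I_1,I_2,h\in C^1$, \textbf{(A.2)} and \textbf{(A.6)}. Local-in-time solvability is then classical, and global existence follows from the uniform estimates below.

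The core of the argument is a family of a priori bounds uniform in $n$ and $\eps$. The basic energy estimate arises from applying It\^o's formula to $\mu\norm{v}_{L^2(\cO)}^2+\norm{w}_{L^2(\cO)}^2$: testing the two potential equations appropriately and adding assembles, on the left-hand side, the (genuinely degenerate) bidomain bilinear form, which together with the $\eps$-terms is coercive in $\norm{\Grad v_i}_{L^2(\cO)}^2+\norm{\Grad v_e}_{L^2(\cO)}^2$; testing the $w$-equation with $w$ and invoking the dissipativity inequality \eqref{ineq:dissipative} absorbs the reaction contributions, while the quartic lower bound in \textbf{(A.4)} yields in addition a uniform $L^4(\Omega\times\cO_T)$ bound on $v$. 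The martingale terms are controlled by the Burkholder--Davis--Gundy inequality and the linear-growth bound \eqref{eq:noise-cond2}, and a Gr\"onwall argument (first in expectation, then, after BDG, for the time supremum) gives bounds, uniform in $n$ and $\eps$, for $v$ in $L^2(\Omega;L^\infty(0,T;L^2(\cO)))\cap L^2(\Omega;L^2(0,T;H^1(\cO)))$, for $v_i,v_e$ in $L^2(\Omega;L^2(0,T;H^1(\cO)))$ (the separate $\eps$-contributions to the gradient bounds disappearing as $\eps\downarrow0$), and for $w$ in $L^2(\Omega;L^\infty(0,T;L^2(\cO)))$; iterating It\^o's formula on higher powers produces the $q_0$-th moments demanded by \eqref{eq:moment-est} with $q_0>\tfrac{9}{2}$. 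From $\pt\gamma-G(\gamma,w)=0$ and \textbf{(A.3)} one gets $\gamma\in L^\infty(0,T;L^2(\cO))$, and differentiating in space (using the $H^1$-regularity of $\gamma_0,w_0$, the $\Grad v$ bound, and $\Grad H=h'(v)\Grad v+c_{H,1}\Grad w$) gives $w,\gamma\in L^2(\Omega;L^\infty(0,T;H^1(\cO)))$; finally \textbf{(A.1)} and the elliptic solution operator give $\bu\in L^2(\Omega;L^2(0,T;H^1(\cO)))$. For compactness I would also establish a uniform fractional-in-time bound for $v,v_i,v_e$ in a negative-order Sobolev space (H\"older of every exponent below $\tfrac{1}{2}$ from the stochastic integral, and $W^{1,2}$-in-time from the drift).

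These bounds make the laws of the tuples $(\bu_n,v_{i,n},v_{e,n},v_n,w_n,\gamma_n,W)$ tight on a suitable product path space: Aubin--Lions--Simon (spatial $H^1$ plus fractional time regularity) yields relative compactness of $v,v_i,v_e$ in $L^2(\cO_T)$ and in $C([0,T];H^{-1}(\cO))$; boundedness of $\pt\gamma$ with spatial $H^1$-regularity gives it for $\gamma$; $\bu_n$ inherits compactness from $\gamma_n$ through the continuity of the elliptic solution operator; and components carrying only weak bounds are equipped with their weak topologies. The Skorokhod--Jakubowski representation theorem then furnishes, on a new stochastic basis $\tilde\cS$, new random variables with the same laws converging $\tilde P$-almost surely. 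I would first let $n\to\infty$ for fixed $\eps$ to obtain a martingale solution of the non-degenerate system, and then repeat the compactness/representation step as $\eps\downarrow0$. In each passage the nonlinear terms are identified as follows: the stochastic integrals converge because strong $L^2(\cO_T)$-convergence of $v$ together with the Lipschitz bound \eqref{eq:noise-cond2} gives convergence of $\beta_v(v)$ and $\beta_w(v)$, which transfers to the It\^o integrals by a standard lemma; the reaction terms $\Iion(v,w)$ and $H(v,w)$ converge by Vitali's theorem using the $L^4$ (respectively $L^2$) bounds and almost-sure strong $L^2$-convergence, which accommodates the cubic growth in \textbf{(A.4)}; and the deformation-dependent diffusion $\bM_{i,e}(\bx,\Grad\bu_n)\Grad v_{i,n}$ converges because $\gamma_n\to\gamma$ strongly forces $\sigma(\bx,\gamma_n)\to\sigma(\bx,\gamma)$ strongly (as $\sigma\in C^1$), hence $\Grad\bu_n\to\Grad\bu$ strongly in $L^2(\cO_T)$ by continuity of the solution operator, hence $\bM_{i,e}(\bx,\Grad\bu_n)\to\bM_{i,e}(\bx,\Grad\bu)$ strongly by \textbf{(A.2)} and dominated convergence, and this is paired with $\Grad v_{i,n}\weak\Grad v_i$. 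A martingale characterization (L\'evy's theorem) confirms that the limiting noise is a cylindrical Wiener process relative to the limiting filtration.

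The pressure is recovered at the end. On almost every path the limiting $\bu$ satisfies the momentum balance tested against divergence-free fields, so the spatial functional $\mathbf{q}\mapsto\bigl\langle\ff(t,\cdot,\gamma)+\Div\bigl(\Grad\bu\,\sigma(\bx,\gamma)\bigr),\mathbf{q}\bigr\rangle$ annihilates divergence-free test functions; de Rham's theorem then yields $p(t,\cdot)\in L^2(\cO)/\mathbb{R}$ with $\Grad p(t,\cdot)$ equal to that functional, and the Ne\v{c}as inequality upgrades this to $p\in L^2(\Omega\times(0,T);L^2(\cO))$ normalized by $\int_\cO p\dx=0$. Since the elliptic block contains no stochastic integral, the whole construction is pathwise and jointly measurable in $(\omega,t)$, so the stochastic version of de Rham's theorem delivers a \emph{predictable} $p$. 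Collecting all the limits shows that $(\bu,p,v_i,v_e,v,w,\gamma)$ on $\tilde\cS$ is a weak martingale solution in the sense of Definition~\ref{def:martingale-sol}. The step I expect to be the main obstacle is the passage $\eps\downarrow0$: the only coercivity uniform in $\eps$ is that of the genuinely degenerate bidomain bilinear form --- one never controls $\Grad v_i$ and $\Grad v_e$ separately from the compatibility structure --- so every nonlinear, diffusive and stochastic term must be re-identified using only the weak compactness that survives this limit, simultaneously with the feedback chain $\gamma\mapsto\bu\mapsto\bM_{i,e}$ and the recovery of a predictable pressure.
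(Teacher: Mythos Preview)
Your overall architecture---two-parameter approximation (Galerkin inside a regularization), energy estimates uniform in both parameters, tightness and Skorokhod--Jakubowski representations, identification of the nonlinear limits by strong/weak pairings, and de Rham to recover the pressure---matches the paper's. But the specific regularization you propose does not do what you claim, and this produces a gap at the very first step.

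The degeneracy of the bidomain block is \emph{temporal}, not spatial: only $dv=dv_i-dv_e$ appears as a time differential, while the diffusion form $\int_{\cO}\bM_i\nabla v_i\cdot\nabla v_i+\int_{\cO}\bM_e\nabla v_e\cdot\nabla v_e$ is already fully coercive in $\norm{\nabla v_i}_{L^2}^2+\norm{\nabla v_e}_{L^2}^2$ by \textbf{(A.2)}, with no help from $\eps$. Adding $-\eps\Delta v_i$ and $-\eps\Delta v_e$ therefore does \emph{not} make the equations ``individually uniformly parabolic''; after projection the Galerkin system remains a differential--algebraic system (the matrix in front of $d(v_i^n,v_e^n)$ is still singular), and your assertion that it ``becomes a finite-dimensional It\^o SDE'' is unjustified. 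The paper instead adds $\eps\,dv_{i,\eps}$ and $-\eps\,dv_{e,\eps}$ to the two potential equations: this makes the effective mass matrix $\bigl(\begin{smallmatrix}(1+\eps)I&-A\\-A^T&(1+\eps)I\end{smallmatrix}\bigr)$ positive definite, so the projected system is a genuine SDE to which the weak-monotonicity theory applies. This also corrects your closing remark: one \emph{does} control $\nabla v_i$ and $\nabla v_e$ separately, uniformly in $\eps$; what disappears as $\eps\downarrow0$ in the paper's scheme are only the $\sqrt{\eps}$-scaled $L^\infty_tL^2_x$ bounds on $v_{i,\eps},v_{e,\eps},\bu_\eps,p_\eps$, and in particular any uniform bound on the pressure $p_\eps$.

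Your treatment of the mechanical block is genuinely different from the paper's and, if carried out carefully, would buy you something. The paper also regularizes the Stokes part via $\eps\,\partial_t\bu_\eps$ and the artificial compressibility $\eps\,\partial_t p_\eps+\Div\bu_\eps=0$, carries $p_\eps$ as a Galerkin unknown, and then has to prove $\tilde p_\eps\to\tilde p$ weakly despite having \emph{no} $\eps$-uniform bound on $p_\eps$; this is done through the stochastic de Rham theorem and the structure of the momentum equation and is the most delicate point of the $\eps\downarrow0$ limit. Your plan to work in a divergence-free subspace (so the pressure never appears in the approximations) and to recover $p$ only at the end by de Rham avoids that difficulty entirely. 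Two caveats, however: the Robin boundary condition forces your divergence-free space to sit inside $[H^1(\cO)]^3$, not $[H^1_0(\cO)]^3$, so the basis must be built accordingly; and the solution map $\gamma\mapsto\bu[\gamma]$ is not Lipschitz from $L^2(\cO)$ as stated, since $\ff=\Div\sigma(\bx,\gamma)+\bg$ involves $\nabla\gamma$---either work from $H^1(\cO)$ (which your $\gamma$-estimate supports) or absorb $\Div\sigma$ into the bilinear form by an integration by parts.
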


\medskip

The organization of the remaining part of the paper 
is as follows: First, Section \ref{sec:defsol} introduces 
the concept of a weak martingale solution. Then, in 
Section \ref{sec:approx-sol}, we present the construction 
of approximate (Faedo-Galerkin) solutions for an 
auxiliary non-degenerate system. Next, Section \ref{sec:apriori} 
presents a series of a priori estimates. These estimates play 
a crucial role in establishing the convergence of the approximate 
solutions for the auxiliary problem. This convergence is 
tied to demonstrating the tightness of the probability laws 
of the Faedo-Galerkin solutions, as discussed 
in Section \ref{sec:tightness}. Moving forward, Section \ref{sec:conv-nondegen} 
establishes the existence of a solution for the non-degenerate problem. 
Subsequently, in Section \ref{sec:conv-degen}, we prove the 
existence of a solution for the original problem. This section 
focuses on demonstrating the weak 
convergence of the pressure variable, which does not 
exhibit a uniform $L^2$ bound. 
By interconnecting and building upon one another, these sections 
form a proof of the main result (Theorem \ref{thm:martingale}). 
Finally, some numerical examples are 
presented in Section \ref{sec:numeric}.

\section{Notion of solution}\label{sec:defsol}
To specify the initial data for the problem under consideration, it 
is important to keep in mind that we are considering a probabilistic
weak notion of solution. For  probabilistic strong (pathwise) 
solutions, we prescribe the initial data as random variables. 
Specifically, we require that $v_0\in L^2(\Omega; L^2(\cO))$ 
and $w_0,\gamma_0\in L^2(\Omega; H^1(\cO))$. 
On the other hand, if we are interested in martingale or 
probabilistic weak solutions, we cannot rely on the knowledge of 
the underlying stochastic basis. Therefore, we must specify the 
initial data in terms of probability measures. More precisely, we 
prescribe the initial data as probability measures $\mu_{v_0}$ 
on $L^2(\cO)$ and $\mu_{w_0}, \mu_{\gamma_0}$ 
on $H^1(\cO)$. In this context, the measures 
$\mu_{v_0}$, $\mu_{w_0}$ and $\mu_{\gamma_0}$ should 
be viewed as the ``initial laws" because the laws of $v(0)$, $w(0)$, 
and $\gamma(0)$ will be required to coincide with $\mu_{v_0}$, $\mu_{w_0}$, 
and $\mu_{\gamma_0}$, respectively.

We will utilize the following notion of 
solution for our stochastic system \eqref{S1}.

\begin{defi}[weak martingale solution] \label{def:martingale-sol}
Let $\mu_{v_0}$ be a probability measure defined on $L^2(\cO)$ 
and $\mu_{w_0}$ and $\mu_{\gamma_0}$ be probability 
measures on $H^1(\cO)$. 
A weak martingale solution of the stochastic electromechanical bidomain 
system \eqref{S1}---with initial data \eqref{S-init} and 
boundary data \eqref{S-bc}---is a collection 
$$
\bigl(\cS,v_i,v_e,v,\bu,p,w,\gamma\bigr)
$$ 
satisfying the following conditions:
\begin{enumerate}
	\item\label{eq:mart-sto-basis}
	$\cS=\bigl(\Om, \cF, \Set{\cFt}, P, \Set{W_k^v}_{k=1}^\infty,
	\Set{W_k^w}_{k=1}^\infty\bigr)$ is a stochastic basis;

	\item \label{eq:mart-wiener}
	$W^v:=\sum_{k\ge 1} W^v_k e_k$ and 
	$W^w:=\sum_{k\ge 1} W^w_k e_k$ are 
	two independent \\ cylindrical Brownian motions, adapted 
	to the filtration $\Set{\cFt}$;

	\item\label{eq:mart-bup-reg}
	$\bu \in L^2(0,T;[H^1(\cO)]^3)$, a.s., 
	and $p\in L^2((0,T)\times \cO)$, a.s.;

	\item\label{eq:mart-uiue-reg}
	$v_i,v_e\in L^2(0,T;H^1(\cO))$, a.s. 
	Moreover, a.s., $v_e$ satisfies \eqref{compat1};
	 
	\item\label{eq:mart-vreg}
	$v=v_i-v_e\in L^2(0,T;H^1(\cO))
	\cap L^4((0,T)\times \cO)$, a.s.; 

	\item\label{eq:mart-adapt}
	$v,w:\Om\times [0,T]\to L^2(\cO)$  
	are $\Set{\cFt}$-predictable in $(H^1(\cO))^*$, and
	$$
	v,w,\gamma\in 
	L^\infty(0,T;L^2(\cO))\cap C([0,T];(H^1(\cO))^*)\,\, \text{a.s.};
	$$

	\item\label{eq:mart-data}
	The laws of $v_0:=v(0)$, $w_0:=w(0)$, $\gamma_0:=\gamma(0)$ are 
	respectively $\mu_{v_0}$, $\mu_{w_0}$, $\mu_{\gamma_0}$;

	\item\label{eq:mart-weakform}
	The following identity holds a.s., in $\Dp(0,T)$,
	\begin{equation*}
		\begin{split}
			& \int_\cO \Bigl(\nabla\bu \bsigma(\bx,\gamma):
			\nabla\bv - p\nabla \cdot \bv\Bigr)\dx 
			+\int_{\ptl \cO} \alpha \bu\cdot\bv \,d S(\bx)
			\\ & \qquad 
			=\int_\cO\ff(t,\bx,\gamma)\cdot\bv \dx,
			\qquad 
			\int_\cO q\Div \bu \dx =0,
		\end{split}
	\end{equation*}
	for all $\bv\in H^1(\cO)^3$ and $q\in L^2(\cO)$, 
	and the following identities hold a.s., 
	for a.e.~time $t \in [0,T]$:
	\begin{equation}\label{eq:weakform}
		\begin{split}
			& \int_{\cO} v(t) \vphi_i \dx 
			+ \int_0^t \int_{\cO} 
			\Bigl( \bM_i(\bx,\nabla\bu)\Grad v_i \cdot \Grad \vphi_i 
			+ \Iion(v,w) \vphi_i \Bigr) \dx\ds
			\\ &  \qquad \qquad 
			= \int_{\cO} v_0 \, \vphi_i \dx + \int_0^t \int_{\cO} 
			 \Iap \vphi_i  \dx\ds			
			+  \int_0^t \int_{\cO} \beta_v(v) \vphi_i \dx  \dW^v (s), 
			\\ & 
			\int_{\cO} v(t)  \vphi_e \dx
			+ \int_0^t \int_{\cO} 
			\Bigl (- \bM_e(\bx,\nabla\bu)\Grad v_e\cdot \Grad \vphi_e
			+ \Iion(v,w) \vphi_e \Bigr) \dx\ds 
			\\ &  \qquad \qquad
			=\int_{\cO} v_0 \vphi_e \dx-\int_0^t \int_{\cO} 
			 \Iap \vphi_e  \dx\ds
			 + \int_0^t \int_{\cO} \beta_v(v)  \vphi_e \dx \dW^v(s),
			 \\ &
			 \int_{\cO} w(t) \vphi\dx =\int_{\cO} w_0 \vphi \dx
			 +\int_0^t \int_{\cO} H(v,w)\vphi \dx \ds
			 \\ & \qquad \qquad \qquad\qquad \qquad\qquad\,
			 + \int_0^t \int_{\cO}  \beta_w(v) \vphi \dx \dW^w(s),
			 \\ &
			 \int_{\cO} \gamma(t) \vphi\dx 
			 =\int_{\cO} \gamma_0 \vphi \dx
			 +\int_0^t \int_{\cO} G(\gamma,w)\vphi \dx \ds,
		\end{split}
	\end{equation}
	for all $\vphi_i,\vphi_e \in H^1(\cO)$ 
	and $\vphi\in L^2(\cO)$. 
\end{enumerate} 
\end{defi} 
 
\begin{rem}
Given the regularity conditions outlined in Definition \ref{def:martingale-sol}, 
it can be readily verified that the deterministic and stochastic 
integrals presented in \eqref{eq:weakform} are well-defined.

Denote by $C\left([0,T];L^2(\cO)\mathrm{-weak}\right)$
the space of weakly continuous $L^2(\cO)$ functions. 
Sometimes we write $(L^2(\cO))_w$ instead of 
$L^2(\cO)\mathrm{-weak}$, and similarly for other spaces.
According to \cite[Lemma 1.4,p. 263]{Temam:1977aa}, part \eqref{eq:mart-adapt} 
of Definition \ref{def:martingale-sol} implies that 
$$
v,w\in C\left([0,T];L^2(\cO)\mathrm{-weak}\right),\quad \text{a.s.}
$$ 
Ultimately, our analysis will lead us to the conclusion that 
$v,w,\gamma\in C([0,T];L^2(\cO))$ 
and $w,\gamma \in L^\infty(0,T;H^1(\cO))$, a.s.
\end{rem}

\section{Approximate solutions}\label{sec:approx-sol}

In this section, we define the Faedo--Galerkin 
approximations for the stochastic system \eqref{S1}. 
We construct approximate solutions 
relative to a stochastic basis
\begin{equation*}
	\cS=\left( \Om, \cF, \Set{\cF_t}_{t\in [0,T]}, P, 
	\Set{W_k^v}_{k=1}^\infty, \Set{W_k^w}_{k=1}^\infty\right),
\end{equation*}
given $\cF_0$-measurable initial data 
$v_0,w_0 \in L^2(\Omega;L^2(\cO))$ and 
$\gamma_0 \in L^2(\Omega;H^1(\cO))$ 
with respective laws $\mu_{v_0},\mu_{w_0}$ 
on $L^2(\cO)$ and $\mu_{\gamma_0}$ 
on $H^1(\cO)$.

To define the Faedo--Galerkin solutions, we will 
use classical Hilbert bases that are orthonormal in $L^2$ 
and orthogonal in $H^1$, see for 
example \cite[p.~138--145]{raviart1998introduction}. 
More precisely, consider bases $\Set{\boldsymbol{\psi}_l}_{l\in \mathbb{N}}$ 
and $\Set{\zeta_l}_{l\in\mathbb{N}}$ such that 
$\Span\Set{\boldsymbol{\psi}_l}$ 
is dense in $ L^2(\cO)^3$ and $H^1(\cO)^3$, 
while $\Span\Set{\zeta_l}$ is dense 
in $L^2(\cO)$ and $H^1(\cO)$. 
In order to impose the compatibility condition \eqref{compat1}, we 
introduce the modified functions
$$
\mu_l:=\zeta_l-\dfrac{1}{\abs{\cO}}
\int_{\cO}\zeta_l\dx
\quad \Longrightarrow \quad
\int_{\cO}\mu_l\dx=0.
$$
We observe that $\Span\Set{\mu_l}$ is dense 
in the space 
$$
H^{1,0}(\cO):=\Set{\mu\in H^1(\cO):
\int_{\cO}\mu\dx=0}.
$$
Moreover, we can apply the Gram-Schmidt process to 
orthonormalize the basis $\Set{\mu_l}$. The 
resulting $L^2(\cO)$-orthonormal 
basis is still denoted by $\Set{\mu_l}$. 
Furthermore, $\Span\Set{\mu_l}$ is 
dense in $H^{1,0}(\cO)$. 
For $n\in \N$, we introduce the finite dimensional spaces 
\begin{equation}\label{eq:Galerkin-spaces}
	\begin{split}
		& \mathbf{H}_n=\operatorname{span}
		\Set{\boldsymbol{\psi}_1,\ldots,\boldsymbol{\psi}_n}
		\subset \bigl(H^1(\cO)\cap L^\infty(\cO)\bigr)^3,
		\\ & 
		\Bbb{L}_n=\operatorname{span}
		\Set{\mu_1,\ldots,\mu_n}
		\subset H^{1,0}(\cO)\cap L^\infty(\cO),
		\\ & 
		\Bbb{W}_n=\operatorname{span}
		\Set{\zeta_1,\cdots,\zeta_n}
		\subset H^1(\cO)\cap L^\infty(\cO).
	\end{split}
\end{equation}
The basis functions $\mu_l$ and $\zeta_l$ are 
conventionally derived from the eigenfunctions 
of the Neumann-Laplace operator. This condition ensures that 
$\partial e_l/\partial n = 0$ on $\partial \cO$, 
where $e_l$ represents either $\mu_l$ or $\zeta_l$ for 
each $l \in \N$. Drawing from elliptic regularity theory, we 
recognize that every such eigenfunction $e_l$ is a member of $H^2(\cO)$. 
Furthermore, the degree of smoothness of $e_l$ within $\bar{\cO}$ 
is determined by the smoothness of $\partial\cO$. For instance, 
if $\partial \cO$ is $C^\infty$, then $e_l \in C^\infty$. For 
more details, see the summary in \cite[p. 9-10]{Bendahmane:2022vy} 
(for example).

If we were to apply the Faedo--Galerkin method directly to 
the highly degenerate system \eqref{S1}, we would end up with 
a set of coupled stochastic differential equations (SDEs) 
and algebraic equations that would need to be solved at each time. 
Unfortunately, the classical SDE theory does not provide a straightforward 
guarantee of the existence and uniqueness of solutions for this type of system.  
To overcome this challenge, we will use a time regularization 
approach inspired by \cite{Bendahmane:2006au}.
This approach considers an auxiliary non-degenerate system 
obtained by adding temporal regularizations to the first 
four equations in \eqref{S1}. Specifically, we add 
$\eps \pt \bu_\eps$, $\eps \pt p_\eps$, $\eps d v_{i,\eps}$, 
and $-\eps d v_{e,\eps}$, respectively, where $\eps>0$ 
is a small parameter that we later send to zero. 

Fixing a small number $\eps>0$, we first look for a solution 
$$
\bU_\eps=\left(\bu_\eps, p_\eps,
v_\eps,v_{i,\eps}, v_{e,\eps},
w_\eps,\gamma_\eps\right)
$$ 
of the following non-degenerate system 
\begin{equation}\label{eq:nondegen-S1}
	\begin{split}
		&\eps \pt \bu_\eps
		-\Div \bigl( \nabla\bu_\eps 
		\sigma(\bx,\gamma_\eps)\bigr)
		+\Grad p_\eps =\ff(t,\bx,\gamma_\eps),
		\\ &  \eps \pt p_\eps +\Div\bu_\eps =0, 
		\\ 
		& d v_\eps+\eps d v_{i,\eps}
		-\Div\bigl(\bM_i(\bx,\nabla\bu_\eps)
		\Grad v_{i,\eps}\bigr)\dt
		\\
		& \qquad \qquad 
		+\Ion(v_\eps,w_\eps)\dt
		=\Iap\dt+\beta_v(v_\eps) \dW^v, 
		\\ 
		& dv_\eps-\eps d v_{e,\eps}
		+\Div\bigl(\bM_e(\bx,\nabla\bu_\eps)
		\Grad v_{e,\eps}\bigr)\dt
		\\ 
		& \qquad \qquad 
		+\Ion(v_\eps,w_\eps)\dt
		=-\Iap\dt+\beta_v(v_\eps) \dW^v , 
		\\ 
		& d w_\eps =H(v_\eps,w_\eps)\dt 
		+\beta_w(v_\eps)\dW^w,
		\\ &
		\pt \gamma_\eps
		-G(\gamma_\eps,w_\eps) =0,
	\end{split}
\end{equation}
where 
$$
v_\eps=v_{i,\eps}-v_{e,\eps}.
$$
In the system \eqref{eq:nondegen-S1}, the elasticity equation 
has been augmented to incorporate the dynamic 
velocity term $\eps \partial_t \bu_\eps$. Meanwhile, the addition of 
the term $\eps \partial_t p_\eps$ in the second equation 
represents an ``artificial compressibility''. 
The system \eqref{S1} also includes the boundary conditions \eqref{S-bc}. 
However, when it comes to specifying the initial data 
for $\bU_\eps$, we need to exercise extra caution 
since \eqref{S-init} only provides initial data for 
$v_\eps$, $w_\eps$, and $\gamma_\eps$. Therefore, we must 
also supply auxiliary initial data for $\bu_\eps$, 
$p_\eps$, $v_{i,\eps}$, and $v_{e,\eps}$. We will 
explore this issue in greater detail later on.

We can now move on to the next step, which involves applying 
the Faedo--Galerkin method to the non-degenerate system 
\eqref{eq:nondegen-S1}.This method consists of projecting 
\eqref{eq:nondegen-S1} onto the finite-dimensional spaces 
in \eqref{eq:Galerkin-spaces}. To accomplish this, we 
consider approximate solutions that take the following form:
\begin{equation}\label{def:approx-sol}
	\begin{split}
		& \bu^n=\sum_{l=1}^{n}\bu^n_{l}
		\boldsymbol{\psi}_l, 
		\quad 
		p^n=\sum_{l=1}^{n}p^n_{l}\zeta_l,
		\\ & 
		w^n=\sum_{l=1}^{n}w^n_{l}\zeta_l, 
		\quad
		\gamma^n=\sum_{l=1}^{n}
		\gamma^n_{l}\zeta_l,
		\\ 
		& v^n_i=\sum_{l=0}^{n}v^n_{i,l}\zeta_l, 
		\quad
		v^n_e=\sum_{l=1}^{n}v^n_{e,l}\mu_l,
		\quad 
		v^n=v^n_i-v^n_e.
	\end{split}
\end{equation}
The time-dependent coefficients $\bu^n_{l}(t), p^n_{l}(t), v^n_{i,l}(t), 
v^n_{e,l}(t), w^n_{l}(t),\gamma^n_{l}(t)$ are determined 
by requiring that the following (Galerkin) equations hold for $l=1,\ldots,n$:
\begin{equation}\label{syst:weakRegElast}
	\begin{split}
		& \eps \dfrac{d}{dt}\int_{\cO}
		\bu^n\cdot \boldsymbol{\psi}_{l}\dx
		+\int_{\cO}\nabla\bu^n
		\bsigma(\bx,\gamma^n):
		\nabla \boldsymbol{\psi}_l\dx
		-p^n\Grad\cdot\boldsymbol{\psi}_l \dx
		\\ 
		& \qquad \qquad \qquad 
		+\int_{\partial \cO}\alpha
		\, \bu^n\cdot \boldsymbol{\psi}_{l} \, dS(\bx)
		=\int_{\cO} \ff(t,\bx,\gamma^n)
		\cdot \boldsymbol{\psi_l} \dx,
		\\ 
		& \eps \dfrac{d}{dt}\int_{\cO}p^n\cdot \zeta_l\dx
		+\int_{\cO}\nabla\cdot\bu^n \zeta_l\dx=0,
		\\ 
		& d\int_{\cO} v^n\zeta_l\dx
		+\eps \, d \int_{\cO}v^n_i\zeta_l\dx
		+\int_{\cO}\bM_i(\bx,\Grad\bu^n)
		\nabla v^n_i\cdot\nabla\zeta_l \dx \dt
		\\ 
		& \,\,
		+\int_\cO\Ion(v^n,w^n)\zeta_l \dx \dt
		= \int_{\cO} \Iap\zeta_l\dx \dt
		+\sum_{k=1}^n\int_\cO
		\beta_{v,k}(v^n)\zeta_l\dx \dW_k^{v}(t),
		\\ 
		& d \int_{\cO}v^n\mu_l\dx
		-\eps \, d\int_{\cO}v^n_e\mu_l\dx
		-\int_{\cO} \bM_e(\bx,\Grad\bu^n)
		\nabla v^n_e \cdot \nabla\mu_l \dx\dt
		\\
		&\, \,
		+\int_\cO \Ion(v^n,w^n)\mu_l \dx \dt
		=-\int_{\cO} \Iap\mu_l\dx
		+\sum_{k=1}^n\int_\cO
		\beta_{v,k}(v^n)\mu_l\dx \dW_k^{v}(t),
		\\ 
		& d\int_{\cO} w^n \zeta_l\dx
		=\int_{\cO} 
		H(v^n,w^n)\zeta_l\dx\dt
		+\sum_{k=1}^n\int_\cO
		\beta_{w,k}(v^n)\zeta_l\dx \dW_k^{w}(t),
		\\ 
		& \dfrac{d}{dt}\int_{\cO}
		\gamma^n \zeta_l\dx
		=\int_{\cO}
		G(\gamma^n,w^n)\zeta_l\dx.
	\end{split}
\end{equation}

Let us comment on the (finite dimensional) 
approximations of the stochastic integrals used in 
\eqref{syst:weakRegElast}. In this context, let $(\beta, W)$ 
represent either $(\beta_v, W^v)$ 
or $(\beta_w, W^w)$. Let $e_l$ denote either $\zeta_l$ 
or $\mu_l$, so that $\Set{e_l}_{l=1}^\infty$ is 
an $L^2(\cO)$-orthonormal basis. Recall that $\beta$ maps between 
$L^2 \left(0, T;L^2(\cO)\right)$ and
$L^2\left(0, T; L_2\left(\U,L^2(\cO)\right)\right)$, for fixed 
$\omega \in \Omega$, where $\U$ comes 
with the orthonormal basis $\Set{\psi_k}_{k=1}^\infty$. 
Utilizing the following decomposition
$$
\beta_k(v)=\beta(v)\psi_k,
\qquad
\beta_k(v)=\sum_{\ell=1}^\infty 
\bigl( \beta_k(v),e_\ell\bigr)_{L^2(\cO)}e_\ell
=:\sum_{\ell=1}^\infty \beta_{k,\ell}(v)e_\ell,
$$
where $\beta_{k,\ell}(v)=
\bigl( \beta_k(v),e_\ell\bigr)_{L^2(\cO)}$, we can write
\begin{align*}
	\beta(v) \dW & = \sum_{k=1}^\infty \beta_k(v) \dW_k
	= \sum_{k=1}^\infty 
	\sum_{\ell=1}^\infty \beta_{k,\ell}(v) e_\ell \dW_k, 
\end{align*}
so that by orthonormality, for any $l=1,2,\ldots$,
$$
\int_{\cO}\beta(v)e_l\dx \dW
=\sum_{k=1}^\infty \int_{\cO}
\beta_{k}(v)e_l\dx \dW_k
=\sum_{k=1}^\infty \beta_{k,l}(v)\dW_k.
$$
For related decompositions, see, e.g., \cite{Chekroun:2016aa}. 
In the Faedo--Galerkin system \eqref{syst:weakRegElast}, we 
employ the usual finite dimensional approximation
\begin{equation*}
	\beta(v) \dW\approx 
	\beta(v) \dW^n :=\sum_{k=1}^n \beta_k(v)\dW_k
	=\sum_{k=1}^n\sum_{\ell=1}^\infty \beta_{k,\ell}(v) e_\ell \dW_k,
\end{equation*}
so that, for any $l=1,\ldots,n$,
\begin{align*}
	\int_{\cO}\beta(v)e_l\dx \dW
	& \approx 
	\int_{\cO}\beta(v)e_l\dx \dW^n
	\\ & =\sum_{k=1}^n \int_{\cO}
	\beta_{k}(v)e_l\dx \dW_k
	=\sum_{k=1}^n \beta_{k,l}(v)\dW_k.
\end{align*}
Clearly \cite[Chapter 4.2.2]{DaPrato:2014aa}, $W^n$ 
converges in $C([0,T];\U_0)$ for $P$-a.e.~$\omega\in \Omega$ 
and in $L^2\left(\Omega; C([0,T];\U_0)\right)$.

Since the original problem \eqref{S1}, \eqref{S-init} lacks 
initial conditions for the unknowns $\bu$, $p$, $v_i$, and $v_e$, it 
becomes necessary to augment our non-degenerate system 
\eqref{eq:nondegen-S1} with auxiliary initial conditions. 
To address this, we introduce the functions
\begin{equation*}
	v_{i,0} =\dfrac{v_0}{2}
	+\dfrac{1}{\abs{\cO}}
	\int_\cO\dfrac{v_0}{2}\dx ,
	\quad
	v_{e,0} = -\dfrac{v_0}{2}
	+\dfrac{1}{\abs{\cO}}
	\int_\cO\dfrac{v_0}{2}\dx,
\end{equation*}
so that $v_0=v_{i,0}-v_{e,0}$ and $\int_\cO v_{e,0}\dx=0$. 
Additionally, we choose $\bu_0\equiv 0$ and select 
an arbitrary $p_0\in L^2(\cO)$ 
to further supplement the initial conditions. 

Given the assumption \eqref{eq:moment-est}, we conclude that 
\begin{equation}\label{eq:vie-init-ass-q0}
	v_{i,0},v_{e,0},w_0,\gamma_0,p_0 
	\in L^{q_0}\bigl(\Omega,\cF,P;L^2(\cO)\bigr),
\end{equation}
with $q_0$ is defined in \eqref{eq:moment-est}.

The initial data for the Faedo--Galerkin approximations 
\eqref{def:approx-sol} are now defined as follows:
\begin{equation}\label{syst:ICreg}
	\begin{split}
		&\bu^n(0)\equiv 0,
		\qquad 
		p^n(0)=\sum_{l=1}^{n}p_{0,l}\zeta_{l}, 
		\quad \text{where $p_{0,l}:=\left( p_0,\zeta_l\right)_{L^2}$},
		\\
		& v^n_i(0)=\sum_{l=1}^{n}v_{i,0,l}\zeta_{l}, 
		\quad \text{where $v_{i,0,l}:=\left( v_{i,0},\zeta_l\right)_{L^2}$}, 
		\\
		& v^n_e(0)=\sum_{l=1}^{n}v_{e,0,l}\mu_{l},
		\quad \text{where $v_{e,0,l}:=\left( v_{e,0},\mu_l\right)_{L^2}$}, 
		\\ 
		& w^n(0)=\sum_{l=1}^{n}w_{0,l}\zeta_l,
		\quad \text{where $w_{0,l}:=\left( w_0,\zeta_l\right)_{L^2}$},
		\\
		& \gamma^n(0)=\sum_{l=1}^{n}\gamma_{0,l}\zeta_l, 
		\quad 
		\text{where $\gamma_{0,l}:=\left( \gamma_{0},\zeta_l\right)_{L^2}$}.
	\end{split}
\end{equation}

In the upcoming sections, we investigate the convergence 
properties of the sequences
$\Set{\bu^n}_{n=1}^\infty$, 
$\Set{p^n}_{n=1}^\infty$, 
$\Set{v^n_j}_{n=1}^\infty$ ($j=i,e$), 
$\Set{v^n=v^n_j-v^n_e}_{n=1}^\infty$, 
$\Set{w^n}_{n=1}^\infty$, 
$\Set{\gamma^n}_{n=1}^\infty$. 
These sequences are analyzed specifically 
for any fixed $\eps>0$. Currently, our assertion is focused on the existence 
of a (pathwise) solution to the SDE system defined by 
\eqref{syst:weakRegElast} and \eqref{syst:ICreg}, for 
each fixed value of $n\in \N$.

\begin{lem}[well-posedness of the Faedo--Galerkin equations]
\label{lem:fg-solutions}
For any fixed $n\in \N$, the regularized Faedo-Galerkin equations, 
specifically the equations \eqref{syst:weakRegElast} together 
with the initial values specified in \eqref{syst:ICreg}, 
possess a unique strong solution 
$$
\Set{\bigl(\bu^n(t),p^n(t),v^n_{i}(t),v^n_{e}(t), 
w^n(t),\gamma^n(t)\bigr)} 
\, \, \text{on $[0,T]$}.
$$
Moreover, almost surely, the processes $v^n_{i}$, $w^n$, 
$\gamma^n$, $p^n$ are members 
of $C([0,T],\Bbb{W}_n)$, 
while $v^n_{e}$ belongs to $C([0,T],\Bbb{L}_n)$, and 
$\bu^n$ is in $C([0,T],\mathbf{H}_n)$. 
Additionally, for each time $t\in [0,T]$, 
these processes are square-integrable with 
respect to the probability variable.
\end{lem}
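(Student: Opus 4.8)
The plan is to recast the Faedo--Galerkin system \eqref{syst:weakRegElast}--\eqref{syst:ICreg} as a single finite-dimensional It\^o equation and then invoke the classical well-posedness theory for stochastic differential equations whose drift is locally Lipschitz with a one-sided linear growth bound and whose diffusion is globally Lipschitz. Collect the unknown coefficients into
$$
X^n=\bigl(\bu^n_1,\dots,\bu^n_n,\ p^n_1,\dots,p^n_n,\ v^n_{i,1},\dots,v^n_{i,n},\ v^n_{e,1},\dots,v^n_{e,n},\ w^n_1,\dots,w^n_n,\ \gamma^n_1,\dots,\gamma^n_n\bigr)\in\R^{N},
$$
and set $W^n:=(W^v_1,\dots,W^v_n,W^w_1,\dots,W^w_n)$, a $2n$-dimensional Brownian motion. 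Since $\Set{\boldsymbol{\psi}_l}$, $\Set{\zeta_l}$, and $\Set{\mu_l}$ are $L^2(\cO)$-orthonormal, the first two lines of \eqref{syst:weakRegElast} already read as explicit ODEs for $\bu^n_l,p^n_l$, with right-hand sides affine in $(\bu^n,p^n)$ and depending on $\gamma^n$ only through the $C^1$ tensor $\bsigma(\bx,\cdot)$ (after integrating by parts in the forcing $\int_\cO\ff(t,\bx,\gamma^n)\cdot\boldsymbol{\psi}_l\dx$, using \eqref{def:ff}, so that no derivative of $\bsigma$ occurs) and on $t$ through $\bg\in(L^2(\cO_T))^3$. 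The only genuine algebraic coupling to resolve lies in the third and fourth lines: using $v^n=v^n_i-v^n_e$ and orthonormality, their increments prescribe $d\bigl((1+\eps)v^n_{i,\cdot}-B\,v^n_{e,\cdot}\bigr)$ and $d\bigl(B^{\mathsf T}v^n_{i,\cdot}-(1+\eps)v^n_{e,\cdot}\bigr)$, with $B_{lm}:=\int_\cO\zeta_l\,\mu_m\dx$; equivalently they prescribe $\mathcal{M}_\eps\,(v^n_{i,\cdot},v^n_{e,\cdot})^{\mathsf T}$, where $\mathcal{M}_\eps$ is the $2n\times2n$ block matrix with diagonal blocks $(1+\eps)\mathbf{I}$, $-(1+\eps)\mathbf{I}$ and off-diagonal blocks $-B$, $B^{\mathsf T}$. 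In these coordinates $B$ represents the $L^2(\cO)$-orthogonal projection onto $\Bbb{W}_n$ restricted to $\Bbb{L}_n$, hence $\norm{B}\le1<1+\eps$, and a Schur-complement computation shows $\mathcal{M}_\eps$ is invertible with $\norm{\mathcal{M}_\eps^{-1}}$ depending only on $\eps$ and $n$. Multiplying by $\mathcal{M}_\eps^{-1}$ puts the system in normal form $dX^n=b(t,X^n)\dt+\Sigma(X^n)\,dW^n$, with $X^n(0)=X^n_0$ given by \eqref{syst:ICreg}.

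By {\bf (A.1)}--{\bf (A.4)} the drift $b$ is jointly measurable in $(t,X^n)$, locally bounded, and locally Lipschitz in $X^n$: the nonlinearities are the $C^1$ maps $\gamma\mapsto\bsigma(\bx,\gamma)$, the Lipschitz maps $\bM\mapsto\bM_{i,e}(\bx,\bM)$, the polynomial FitzHugh--Nagumo kinetics $\Iion,H$, and the affine map $G$. By {\bf (A.6)} (equivalently \eqref{eq:noise-cond2}) the diffusion $\Sigma$ is globally Lipschitz with linear growth. The classical theory of finite-dimensional It\^o SDEs (see, e.g., \cite{Prevot:2007aa} and the references therein) therefore provides a unique maximal strong solution $X^n\in C\bigl([0,\tau^n);\R^{N}\bigr)$ a.s., for a stopping time $\tau^n\le T$; pathwise uniqueness on $[0,\tau^n)$ follows from the local Lipschitz bound on $b$, It\^o's formula, the Burkholder--Davis--Gundy inequality, Gronwall's lemma, and a localization argument.

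It remains to rule out explosion, i.e.\ to prove $\tau^n=T$ a.s. Let $\tau^n_R:=\inf\Set{t\ge0:\abs{X^n(t)}\ge R}\wedge T$ and apply It\^o's formula on $[0,t\wedge\tau^n_R]$ to
$$
\mathcal{E}(X^n):=\eps\norm{\bu^n}_{L^2(\cO)}^2+\eps\norm{p^n}_{L^2(\cO)}^2+\norm{v^n}_{L^2(\cO)}^2+\eps\norm{v^n_i}_{L^2(\cO)}^2+\eps\norm{v^n_e}_{L^2(\cO)}^2+\mu\norm{w^n}_{L^2(\cO)}^2+\norm{\gamma^n}_{L^2(\cO)}^2,
$$
an equivalent norm on $\R^N$ (with constants depending on $n,\eps$; here $\mu$ is the weight appearing in {\bf (A.4)}), whose It\^o drift can be read off by testing the equations \eqref{syst:weakRegElast} against $\bu^n$, $p^n$, $v^n_i$, $v^n_e$, $\mu w^n$, $\gamma^n$ respectively. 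Testing the $\bu^n$- and $p^n$-equations cancels the pressure coupling and produces the coercive term of {\bf (A.1)}; testing the $v^n_i$- and $v^n_e$-equations and subtracting produces $\tfrac12 d\norm{v^n}_{L^2(\cO)}^2$, the two nonnegative diffusion terms from {\bf (A.2)}, and the reaction term $\int_\cO\Iion(v^n,w^n)\,v^n\dx$, which by {\bf (A.4)} --- the coercivity $I_1(v)v\ge\underline{c}_I\abs{v}^4-c_{I,2}\abs{v}^2$, the affine form of $I_2$, and Young's inequality --- is bounded below by $-C\bigl(\norm{v^n}_{L^2(\cO)}^2+\norm{w^n}_{L^2(\cO)}^2\bigr)$; testing the $w^n$- and $\gamma^n$-equations and invoking {\bf (A.3)}, {\bf (A.4)} contributes only quadratically; the It\^o correction and the martingale term are controlled via \eqref{eq:noise-cond2} and Burkholder--Davis--Gundy; and the forcing (the body force $\bg$ and the current $\Iap\in L^2(\Omega;L^2(\cO_T))$) is absorbed by Young's inequality, using that $\norm{\bg(t,\cdot)}_{L^2(\cO)}^2$ and $\norm{\Iap(t,\cdot)}_{L^2(\cO)}^2$ lie in $L^1(0,T)$ a.s. Taking expectations and applying Gronwall's lemma yields a bound on $\E\bigl[\sup_{s\le t}\mathcal{E}(X^n(s\wedge\tau^n_R))\bigr]$ independent of $R$; hence $\tau^n_R\uparrow T$ a.s., so $\tau^n=T$ a.s.\ and the solution is global on $[0,T]$, and the same bound gives $\E\,\mathcal{E}(X^n(t))<\infty$ for every $t$, i.e.\ the asserted square integrability. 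Finally, from \eqref{def:approx-sol} --- on the finite-dimensional spaces \eqref{eq:Galerkin-spaces}, convergence of coefficients is convergence in norm --- $\bu^n,p^n,\gamma^n$ are (absolutely) continuous in time as solutions of ODEs and $v^n_i,v^n_e,w^n$ are path-continuous as solutions of It\^o SDEs, which yields the claimed $C([0,T];\cdot)$ regularity.

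\textbf{Main obstacle.} The decisive point is the non-explosion step: the cubic kinetics $\Iion$ makes the Galerkin drift super-linear, so a priori the finite-dimensional system could blow up in finite time. This is circumvented precisely by the one-sided (dissipativity) structure built into assumption {\bf (A.4)}: after the bidomain-specific choice of test functions, which turns the reaction term into $\int_\cO\Iion(v^n,w^n)\,v^n\dx$ with $v^n=v^n_i-v^n_e$, the dangerous contribution splits into a sign-definite damping plus a quadratically dominated remainder --- a Khasminskii-type growth condition on the coefficients that excludes explosion. (The quantitative, $n$- and $\eps$-independent refinement of this energy estimate is carried out in Section~\ref{sec:apriori}.)
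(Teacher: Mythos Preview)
Your argument is correct and complete. The approach differs from the paper's in the choice of SDE existence framework. The paper rewrites the system as $dY^n=F(Y^n)\dt+G(Y^n)\,dW^n$ and then verifies the \emph{local weak monotonicity} and \emph{weak coercivity} hypotheses of \cite[Theorem~3.1.1]{Prevot:2007aa}, appealing to the dissipativity inequality \eqref{ineq:dissipative} from {\bf (A.4)}; this yields global existence in a single stroke. You instead use the more elementary route of local Lipschitz drift $+$ globally Lipschitz diffusion $\Rightarrow$ local strong solution, followed by a Khasminskii-type non-explosion argument via the energy functional $\mathcal{E}$. Your invertibility argument for the coupling matrix (the operator-norm bound $\norm{B}\le 1<1+\eps$ and Schur complement) is a clean alternative to the paper's direct proof that the symmetric matrix $\mathbf{N}$ is positive semidefinite. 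What the paper's route buys is brevity---global existence comes out of the cited theorem with no localization---whereas your route makes the non-explosion mechanism explicit and, as you note, essentially anticipates the quantitative energy estimates of Section~\ref{sec:apriori}. Either way the key structural point is the same: the cubic ionic term is harmless because, after the bidomain test-function trick, it enters only through $\int_\cO \Iion(v^n,w^n)\,v^n\dx$, which is one-sidedly controlled by {\bf (A.4)}.
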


\begin{proof}
By the orthonormality of the bases and incorporating 
the algebraic relation $v^n=v_i^n-v_e^n$, the 
system \eqref{syst:weakRegElast} 
simplifies into $(l=1,\cdots,n)$:
\begin{equation}\label{syst:discreteReg}
	\begin{aligned}
		&\eps \, d\bu_l^n = A_l \, dt,
		\quad 
		\eps \, dp_l^n = P_l \, dt,
		\\
		& (1+\eps) \, dv_{i,l}^n
		-\sum_{r=1}^n\left(\int_{\cO}
		\zeta_l\mu_r\dx \right)\, dv_{e,r}^n
		= A_{i,l}\, dt+\Gamma_{i,l} \, dW^{v,n},
		\\ & -\sum_{r=1}^n\left(\int_{\cO}
		\zeta_l\mu_r\dx\right)\, dv_{i,r}^n
		+(1+\eps) \, dv_{e,l}^n
		=A_{e,l}\, dt+\Gamma_{e,l} \, dW^{v,n},
		\\ & dw_l^n =  A_{H,l}dt+\Gamma_{w,l} \, dW^{w,n},
		\quad d\gamma_l^n = A_{G,l}\, dt,
	\end{aligned}
\end{equation} 
where 
\begin{align*}
	& A_l = -\int_{\cO}\nabla\bu^n:\nabla\psi_l \dx
	+\int_{\cO} p^n\nabla\cdot \psi_l \dx
	\\ & \qquad \quad
	-\int_{\partial\cO}\alpha\bu^n\cdot\psi_l\, dS(\bx)
	+\int_{\cO} f\cdot\psi_l\dx,
	\quad 
	P_l = -\int_{\cO}\nabla\cdot\bu^n\zeta_l \dx,
	\\ 
	& A_{i,l} =  -\int_\mathcal{O}M_i(x,D\bu^n)
	\nabla v_i^n\cdot\nabla\zeta_l\dx
	-\int_{\cO}\Ion(v^n,w^n)\zeta_l\dx
	+\int_{\cO}\Iap^n\zeta_l\dx,
	\\ 
	& A_{e,l} = -\int_{\cO} M_e(x,D\bu^n)\nabla v_e^n
	\cdot\nabla\mu_l \dx
	+\int_{\cO} \Ion(v^n,w^n)\mu_l
	+\int_{\cO}\Iap^n\mu_l\dx,
	\\ 
	& A_{H,l} = \int_{\cO} H(v^n,w^n)\zeta_l \dx,
	\quad
	A_{G,l} =\int_{\cO}G(\gamma^n,w^n)\zeta_l \dx.
\end{align*}
and 
\begin{align*}
	& \Gamma_{i,l} = \{\Gamma_{i,l,k}\}_{k=1}^n, 
	\,\, \Gamma_{i,l,k} 
	= \int_{\Omega} \beta_{v,k} (v^n) \zeta_l \, dx, 
	\,\, \Gamma_{i,l} \, dW^{v,n} 
	= \sum_{k=1}^{n} \Gamma_{i,l,k} dW_k^v, 
	\\
	& \Gamma_{e,l} = \{\Gamma_{e,l,k}\}_{k=1}^n, 
	\,\, \Gamma_{e,l,k} 
	= -\int_{\Omega} \beta_{v,k}(v^n) \mu_l \, dx, 
	\,\, \Gamma_{e,\ell} dW^{v,n} 
	= \sum_{k=1}^{n} \Gamma_{e,\ell,k} dW_k^v,
	\\ 
	& \Gamma_{w,l} = \{\Gamma_{w,l,k}\}_{k=1}^n, 
	\,\, \Gamma_{w,l,k} 
	= \int_{\Omega} \beta_{w,k} (w^n) \zeta_l \, dx, 
	\,\, \Gamma_{i,l} \, dW^{w,n} 
	= \sum_{k=1}^{n} \Gamma_{w,l,k} dW_k^v.
\end{align*}

The third and fourth equations in the 
system \eqref{syst:discreteReg} 
can be expressed equivalently in matrix form as follows:
$$
\mathbf{M}
\begin{pmatrix}
	dv_{i}^n\\
	dv_{e}^n
\end{pmatrix}=b,
\qquad
\mathbf{M} 
= 
\begin{pmatrix}
	(1+\eps)\mathbf{I}_{n} & -\mathbf{A} \\
	-\mathbf{A}^T & (1+\eps)\mathbf{I}_{n}
\end{pmatrix}.
$$
Here, $\mathbf{A}=\Set{a_{lr}}$ with $a_{lr}=
\int_{\cO} \zeta_l\mu_r \dx$, $\mathbf{I}_{n}$ 
represents the identity matrix, and $b$ is the vector containing 
the right-hand side terms corresponding to the equations in 
\eqref{syst:discreteReg}. 

In order to express this system in the form
$$
\begin{pmatrix}
	dv_{i}^n\\
	dv_{e}^n
\end{pmatrix}=\mathbf{M}^{-1}b,
$$ 
we need to verify that the matrix $\mathbf{M}$ is invertible.  
To do so, let us expand $\mathbf{M}$ as follows:
$$
\mathbf{M} = 
\begin{pmatrix}
	\mathbf{I}_{n} & -\mathbf{A} \\
	-\mathbf{A}^T & \mathbf{I}_{n}
\end{pmatrix} 
+\eps 
\begin{pmatrix}
	\mathbf{I}_{n} & 0 \\
	0 & \mathbf{I}_{n}
\end{pmatrix}.
$$
 It is enough to show that the matrix 
 $$
\mathbf{N} := 
\begin{pmatrix}
	\mathbf{I}_{n} & -\mathbf{A} \\
	-\mathbf{A}^T & \mathbf{I}_{n}
\end{pmatrix}
$$ 
is positive. 
Consider 
$\xi=
\begin{pmatrix}
	\xi_{i} \\
	\xi_{e}
\end{pmatrix}$, where 
$\xi_{i} = (\xi_{i,1},\cdots,\xi_{i,n})^T$ 
and $\xi_{e} = (\xi_{e,1},\cdots,\xi_{e,n})^T$ are 
vectors in $\R^{n}$. Then
\begin{align*}
	\xi^T\mathbf{N}\xi 
	& = \xi_{i}^T\xi_{i}-\xi_{i}^T\mathbf{A}\xi_{e}
	+\xi_{e}^T\xi_{e}-\xi_{e}^T\mathbf{A}^T\xi_{i}
	\\ & = \sum_{r,l}\left[\xi_{i,r}\xi_{i,l}\int_{\cO}
	\zeta_r\zeta_l\dx-2\xi_{i,r}a_{rl}\xi_{e,l}
	+\xi_{e,r}\xi_{e,l}\int_{\cO}\mu_r\mu_l\dx\right] 
	\\ & = \int_{\cO}\sum_{r,l}\Bigl[\xi_{i,r}\xi_{i,l}\zeta_r\zeta_l
	-2\xi_{i,r}\xi_{e,l}\zeta_l\mu_r
	+\xi_{e,r}\xi_{e,l}\mu_r\mu_l\Bigr]\dx 
	\\ & = \int_{\cO}
	\left(\sum_{l}\xi_{i,l}\zeta_l\right)^2
	-2\sum_{r,l}\xi_{i,r}\xi_{e,l}\zeta_l\mu_r
	+\left(\sum_{l}\xi_{e,l}\mu_l\right)^2 \dx 
	\\ & =  \int_{\mathcal{O}}\left(\sum_{l}\xi_{i,l}\zeta_l
	-\sum_{l}\xi_{e,l}\mu_l\right)^2\dx \ge	0.
\end{align*}
Thus, the matrix $\mathbf{M}$ is positive definite, 
and as a result, it is also invertible.

Let $Y^n(t)$ represent the vector 
that encompasses all variables of interest, including the 
sequences 
$\Set{\bu_l^n(t)}_{l=1}^n$, 
$\Set{p_l^n(t)}_{l=1}^n$, 
$\Setb{\sqrt{\eps}v_{i,l}^n(t)}_{l=1}^n$,
$\Setb{\sqrt{\eps}v_{e,l}^n(t)}_{l=1}^n$,
$\Set{w_l^n(t)}_{l=1}^n$, and
$\Set{\gamma_l^n(t)\}_{l=1}^n}$.
Furthermore, define $F(Y^n)$ as the vector that aggregates 
all the deterministic drift terms derived from the 
right-hand side of the system \eqref{syst:discreteReg}, and let $G(Y^n)$ 
represent the collection of all corresponding noise coefficients. 
Consequently, the system under consideration 
can be succinctly expressed as the SDE system 
\begin{equation}\label{SDEsystem}
	dY^n(t)=F(Y^n(t))\, dt+G(Y^n(t))\, dW^n(t),
	\quad Y^n(0)=Y^n_0,
\end{equation}
where $Y^n_0$ denotes the vector of 
initial conditions as specified by \eqref{syst:ICreg}.

If the vector-functions $F$ and $G$ satisfy global Lipschitz 
continuity conditions, classical theory on SDEs 
guarantees the existence and uniqueness of a strong solution. 
However, the inherent nonlinearity of the ionic models, 
as detailed in (\textbf{A.4}), implies that these global 
Lipschitz conditions are not met for the SDE system 
\eqref{SDEsystem}. 

Nevertheless, the weak monotonicity framework of 
\cite[Theorem 3.1.1]{Prevot:2007aa} is applicable. 
In fact, the verification of the following two conditions 
will imply the existence of a 
global strong solution to \eqref{SDEsystem}:
\begin{itemize}
	\item[$\bullet$] (local weak monotonicity) 
	$\forall Y_1,Y_2\in \R^{6n}$ and 
	$\forall r>0$ with $\abs{Y_1},\abs{Y_2}\leq r$,
	$$
	2\bigl(F(Y_1)-F(Y_2)\bigl)\cdot\left(Y_1-Y_2\right)
	+\abs{G(Y_1)-G(Y_2)}^2\leq K_r \abs{C_1-C_2}^2,
	$$
	for some $r$-dependent positive constant $K_r$.
	\medskip

	\item[$\bullet$] (weak coercivity) $\exists$ 
	constant $K>0$ such that $\forall Y\in \R^{6n}$,
	$$
	2F(Y)\cdot Y+ \abs{G(Y)}^2\leq K \bigl(1+\abs{C}^2\bigr).
	$$
\end{itemize}
Considering the assumptions {\bf (A.1)}--{\bf (A.4)}, 
especially with the utilization of \eqref{ineq:dissipative}, the 
demonstration of both the local weak monotonicity and the 
weak coercivity conditions follows the approach 
detailed in \cite[p.~5328]{Bendahmane:2018aa}. 
We will not repeat the details here.

Since these conditions are satisfied, for any 
$\cF_0$-measurable mapping $Y^n_0: \Omega \to \R^{6n}$, 
there exists a unique solution to the SDE system 
\eqref{SDEsystem}, unique up to $P$-indistinguishability. 
A solution $Y$ is defined here as the collection 
$Y=\Set{Y(t)}_{t\in [0,T]}$, which forms a $P$-almost surely continuous, 
$\R^{6n}$-valued, ${\cF_t}$-adapted process. 
For every $t \in [0,T]$, $P$-almost surely,
$$
Y^n(t) = Y^n_0 + \int_0^t F(Y^n(s)) \,ds 
+\int_0^t G(Y^n(s)) \, dW^n(s).
$$
Furthermore, for all $t \in [0,T]$,
$$
\E \abs{Y^n(t)}^2  \leq 
C_T\bigl( \E \abs{Y^n_0}^2 + 1\bigr),
$$
for a $T$-dependent constant $C_T$. We refer 
to \cite[Theorem 3.1.1]{Prevot:2007aa} for further details.
\end{proof}

Let us consider a basis $\Set{e_l}_{l=1}^\infty$ selected 
from those previously introduced in \eqref{eq:Galerkin-spaces}. 
We then introduce the projection operator, as detailed, 
for example, in \cite[page 1636]{Brzezniak:2013aa}:
$\Pi_n:(H^1)^*\to \Span\Set{e_l}_{l=1}^{\infty}$, 
$\Pi_n u^* := \sum_{l=1}^n 
\left \langle u^*,e_l \right
\rangle_{(H^1)^*,H^1}e_l$. 
The restriction of $\Pi_n$ to $L^2$ is also denoted by $\Pi_n$, 
in which case $\Pi_n$ is the orthogonal projection from $L^2$
to  $\Span\Set{e_l}_{l=1}^{\infty}$. The following properties hold:
\begin{equation*}
	\begin{split}
		& \norm{\Pi_n u}_{L^2}\le \norm{u}_{L^2},
		\quad 
		\left(\Pi_n u^*, u\right)_{L^2}
		=\left \langle u^*, \Pi_n u\right\rangle_{(H^1)^*,H^1},
		\\ & 
		\text{and} \quad 
		\norm{\Pi_n u -u}_{H^1(\cO)}\ton 0. 
	\end{split}
\end{equation*}

We have three projection operators, $\Pi_n^{\mathbf{H}}$, 
$\Pi_n^{\Bbb{L}}$ and $\Pi_n^{\Bbb{W}}$, which correspond to the 
finite-dimensional spaces $\mathbf{H}_n,\Bbb{L}_n,\Bbb{W}_n$ defined 
in \eqref{eq:Galerkin-spaces}. In order to streamline our notation, we 
will often omit the specific dependency of $\Pi_n$ on its associated space. 
For instance, we might write $\Pi_n$ in lieu of $\Pi_n^{\mathbf{H}}$. 
The specific space in question should then be inferred 
from the context and the variable being projected.

Utilizing the projection operators $\Pi_n$, we can express 
the Faedo-Galerkin equations from \eqref{syst:weakRegElast} in an 
integrated form. This representation allows us to 
present these equations as equalities between random 
variables in the dual space of $H^1$:
\begin{equation}\label{eq:approx-eqn-integrated}
	\begin{split}
		&\eps \bu^n(t)
		=\int_0^t \Pi_n\left[\Div \bigl( \nabla\bu^n 
		\sigma(\bx,\gamma^n)\bigr)+\Grad p^n\right]
		+\Pi_n[\ff]\ds
		\quad \text{in $\bigl[(H^1)^*\bigr]^3$},
		\\ 
		& \eps p^n(t)=\eps p^n_0 -\int_0^t\Pi_n\left[\Div\bu^n\right] \ds
		\quad \text{in $(H^1)^*$}, 
		\\ 
		& v^n(t) + \eps v^n_i(t) = v^n_0 + \eps v_{i,0}^n
		+\int_0^t  \Pi_n\left[\Iap\right]\ds
		\\ 
		& \qquad\qquad
		+ \int_0^t \Pi_n\left[\Div\bigl(\bM_i(\bx,\nabla\bu^n)
		\Grad v^n_i \bigr) 
		-\Ion(v^n,w^n) \right]\ds 
		\\ 
		& \qquad\qquad
		+ \int_0^t \Pi_n\left[ \beta_v(v^n)\right] \dW^{v,n}(s)
		\quad \text{in $(H^1)^*$},
		\\ 
		& v^n(t)-\eps v^n_e(t)=v^n_0-\eps v_{e,0}^n
		-\int_0^t  \Pi_n\left[ \Iap\right]\ds
		\\ & \qquad \qquad
		+\int_0^t \Pi_n\left[-\Div\bigl(\bM_e(\bx,\nabla\bu^n) 
		\Grad v^n_e \bigr)
		-\Ion(v^n,w^n) \right]\ds 
		\\ 
		& \qquad\qquad
		+ \int_0^t  \Pi_n\left[\beta_v(v^n)\right] \dW^{v,n}(s)
		\quad \text{in $(H^1)^*$},
		\\ & w^n(t) = w^n_0 
		+ \int_0^t \Pi_n\left[H(v^n,w^n)\right] \ds
		\\ 
		& \qquad\qquad 
		+ \int_0^t \Pi_n\left[\beta_w(v^n)\right]\dW^{w,n}(s)
		\quad \text{in $(H^1)^*$},
		\\ 
		& \gamma^n(t) = \gamma^n_0 
		+ \int_0^t \Pi_n \left[G(\gamma^n,w^n)\right] \ds
		\quad \text{in $(H^1)^*$},
	\end{split}
\end{equation}
where $v^n=v^n_i-v^n_e$ 
a.e.~in $\Omega\times [0,T]\times \cO$. 

The notation $\Pi_n[\ff]$ is a concise representation for 
$\Pi_n\bigl[\ff(t,\bx,\gamma^n)\bigr]$,  with 
$\ff$ defined in \eqref{def:ff}. In the same vein, 
$z_0^n$ stands for $\Pi_n z_0$, where $z$ encompasses 
the variables $p$, $v_i$, $v_e$, $w_0$, and $\gamma_0$. 
Moreover, the term $v^n_0$ is defined as the 
difference between $v_{i,0}^n$ and $v_{e,0}^n$.

\section{A priori estimates}\label{sec:apriori}
We will prove convergence of the Faedo-Galerkin 
approximations using the stochastic 
compactness method, which asks for the 
uniform tightness of the probability laws 
linked to these approximations. Tightness will hinge 
upon a series of a priori estimates, which hold uniformly in $n$. 
In our first set of estimates, we 
harness the structure of the equations \eqref{eq:nondegen-S1}, 
along with stochastic calculus, to deduce several basic energy-type estimates. 
Then we utilize these estimates and the governing equations
to derive temporal translation estimates in the space of $L^2_tL^2_x$. 
When brought together, these estimate will facilitate the 
tightness of the laws of the Faedo-Galerkin approximations.

\medskip
The non-degenerate system \eqref{eq:nondegen-S1} consists 
of a Stokes-like component for $(\bu_\eps,p_\eps)$, 
a pair of SPDEs for $(v_{i,\eps}, v_{e,\eps})$ (and 
$v_\eps=v_{i,\eps}-v_{e,\eps}$), a SDE for 
$w_\eps$, and an ODE for $\gamma_\eps$. 
The derivation of energy balance equations for 
$\eps\norm{\bu_\eps(t)}_{L^2}^2$, $\eps\norm{p_\eps(t)}_{L^2}^2$, 
and $\norm{\gamma_\eps(t)}_{L^2}^2$ can be accomplished by 
standard deterministic arguments. Similarly, by stochastic calculus, 
\begin{equation}\label{eq:Itovpmu2}
	d\abs{w_\eps}^2=2 H(v_\eps,w_\eps)
	+2 w_\eps\, \beta_w(w_\eps) \dW^w,
\end{equation}
from which we get the balance equation 
for $\norm{w_\eps(t)}_{L^2}^2$.

It is less straightforward 
to deduce the stochastic balance relation for 
$$
\norm{v_\eps(t)}_{L^2}^2
+\eps\norm{v_{i,\eps}(t)}_{L^2}^2
+\eps\norm{v_{e,\eps}(t)}_{L^2}^2.
$$
Following \cite{Bendahmane:2018aa}, we will present 
a formal argument that will lead to the energy balance for these variables. 
From \eqref{eq:nondegen-S1}, we deduce that
\begin{align*}
	dv_{i,\eps}  
	& =\Biggl[ \frac{1+\eps}{\eps(2+\eps)}
	\Div\bigl(\bM_i(\bx,\nabla\bu_\eps) \Grad v_{i,\eps} \bigr)
	+\frac{1}{\eps(2+\eps)}\Div\bigl(\bM_e(\bx,\nabla\bu_\eps) 
	\Grad v_{e,\eps} \bigr)
	\\ 
	& \qquad  \qquad
	-\frac{1}{2+\eps} \Ion(v_\eps,w_\eps)
	+\frac{1}{2+\eps}\Iap \Biggl]\dt
	+ \frac{1}{2+\eps}\beta_v(v_\eps) \dW^v,
\end{align*}
and
\begin{align*}
	dv_{e,\eps}& =\Biggl[ \frac{1}{\eps(2+\eps)}
	\Div\bigl(\bM_i(\bx,\nabla\bu_\eps) \Grad v_{i,\eps} \bigr)
	+\frac{1+\eps}{\eps(2+\eps)}\Div\bigl(\bM_e(\bx,\nabla\bu_\eps)
	\Grad v_{e,\eps} \bigr)
	\\ 
	& \qquad \qquad 
	+\frac{1}{2+\eps} \Ion(v_\eps,w_\eps)
	- \frac{1}{2+\eps} \Iap \Biggl]\dt
	-\frac{1}{2+\eps}\beta_v(v_\eps) \dW^v.
\end{align*}
By considering the aforementioned pair of SPDEs along 
with the third and fourth SPDEs stated in \eqref{eq:nondegen-S1}, 
we can employ the It\^{o} product rule to deduce the following result:
\begin{equation}\label{eq:Itovpmu1}
	\begin{split}
	&d\left(\abs{v_\eps}^2 +\eps \abs{v_{i,\eps}}^2 
	+\eps \abs{v_{e,\eps}}^2 \right)
	= d\left(v_{i,\eps} \left(v_\eps+\eps v_{i,\eps}\right) \right)
	+d\left(-v_{e,\eps} \left(v_\eps-\eps v_{e,\eps}\right) \right)
	\\ & \qquad 
	=  \Bigl [ 2 v_i \, \Div\bigl(\bM_i(\bx,\nabla\bu_\eps) \Grad v_{i,\eps}\bigr)
	+2 v_{e,\eps}\, \Div\bigl(\bM_e(\bx,\nabla\bu_\eps) 
	\Grad v_{e,\eps} \bigr) 
	\\ & \qquad \qquad
	-2 v \Ion(v_\eps,w_\eps)+2 v\Iap
	+\frac{2}{2+\eps}\abs{\beta_v(v_\eps)}^2 \Bigr]\dt 
	+2 v_\eps\, \beta_v(v_\eps) \dW^v,
	\end{split}
\end{equation}
which is the sought-after energy balance.

Following this, we will now transition into providing rigourous estimates 
for all the variables at the discrete level for the 
Faedo-Galerkin approximations. 

\begin{lem}\label{lem:apriori-est}
Assuming that conditions {\bf (A.1)} to {\bf (A.6)} 
and \eqref{eq:vie-init-ass-q0} are satisfied, consider 
the following variables:
$$
\bu^n(t),p^n(t),v^n_i(t),v^n_e(t),
v^n(t),w^n(t),\gamma^n(t), \quad t\in [0,T],
$$
which fulfill the Faedo--Galerkin equations given by 
\eqref{syst:weakRegElast}, with initial data from \eqref{syst:ICreg}. 
There exists a constant $C>0$, independent of $n$ 
and $\eps$, such that
\begin{align}
	\label{Gal:est3}
	& \E \left[\norm{v^n(t)}_{L^\infty(0,T;L^2(\cO))}^2\right]
	+\E \left[\norm{w^n(t)}_{L^\infty(0,T;L^2(\cO))}^2\right]
	\\ \notag
	& \qquad 
	+\sum_{j=i,e}
	\E \left[ \norm{\sqrt{\eps}v^n_j(t)}_{L^\infty(0,T;L^2(\cO))}^2 \right]\le C; 
	\\  \label{Gal:est2-bis}
	& \sum_{j=i,e} \E \left[\int_0^T \int_{\cO}
	\abs{\Grad v^n_j}^2 \dx \dt \right]
	+\E \left[\int_0^T \int_{\cO} 
	\abs{v^n}^4 \dx\dt \right] \le C;
	\\
	\label{Gal:est2-new}
	& \sum_{j=i,e} 
	\E \left[ \int_0^T \int_{\cO} \abs{v^n_j}^2 \dx \dt \right]\le C.
\end{align}
Moreover,
\begin{align}
	& \label{Gal:est1-mechanic-gamma}
	\E \left[ \norm{\gamma^n}_{L^\infty(0,T;L^2(\cO))}^2
	\right]\leq C,
	\\ & \label{Gal:est1-mechanic-u-p} 
	\E \left[
	\norm{\sqrt{\eps}\bu^n}_{L^\infty(0,T;L^2(\cO))}^2
	\right]\leq C, 
	\quad 
	\E \left[\norm{\sqrt{\eps}p^n}_{L^\infty(0,T;L^2(\cO))}^2
	\right]\leq C.
\end{align}
Finally, the following gradient estimates hold: 
\begin{align}
	\label{Gal:est2-mechanic}
	& \E\left[\norm{\bu^n}^2_{L^2(0,T;H^1(\cO)^3)}\right] \leq C
	\\ & \label{Gal:est2-mechanic-new}
	\E \left[\norm{\nabla w^n}_{L^\infty(0,T;L^2(\cO))}^2\right]
	\leq C,
	\quad
	\E \left[\norm{\nabla \gamma^n}^2_{L^\infty(0,T;L^2(\cO))}\right]
	\leq C.
\end{align}	
\end{lem}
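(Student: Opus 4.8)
The plan is to test each of the Galerkin equations in \eqref{syst:weakRegElast} against a suitable element of the corresponding finite-dimensional space (equivalently, to apply the finite-dimensional It\^o formula of Lemma \ref{lem:fg-solutions} to an appropriate quadratic functional of the coefficient vector), then to combine the resulting balances and close them by Gr\"onwall's lemma, using the structural assumptions {\bf (A.1)}--{\bf (A.6)}, Young's inequality, and the Burkholder--Davis--Gundy (BDG) inequality for the martingale terms. The estimates will be derived in a fixed order, each relying on those already obtained: (i) the coupled bound for $v^n$, $w^n$ and $\sqrt{\eps}\,v^n_i$, $\sqrt{\eps}\,v^n_e$, i.e.\ \eqref{Gal:est3}--\eqref{Gal:est2-new}; (ii) the spatial-gradient bound for $w^n$; (iii) the bounds for $\gamma^n$ and $\nabla\gamma^n$; (iv) the bounds for $\bu^n$ and $\sqrt{\eps}\,p^n$. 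This ordering is dictated by the one-directional couplings: the $v$-balance feeds the $w$- and $\gamma$-balances, while $\nabla\gamma^n$ enters the forcing $\ff(t,\bx,\gamma^n)=\Div\sigma(\bx,\gamma^n)+\bg$ of the Stokes-like block through the chain rule together with the uniform $C^1$-bounds on $\sigma$ from {\bf (A.1)} (finite thanks to the saturation built into $\gamma_\kappa$). Uniformity of the constants in $n$ is ensured by $\norm{\Pi_n z_0}_{L^2}\le\norm{z_0}_{L^2}$ and, for gradients, by the $H^1$-orthogonality of the eigenfunction bases; uniformity in $\eps$ (for $\eps\le1$) is ensured since the $\eps$-weighted initial terms obey $\eps\norm{\Pi_n v_{i,0}}_{L^2}^2\le\norm{v_{i,0}}_{L^2}^2$, which is integrable by \eqref{eq:vie-init-ass-q0}, and every other $\eps$-dependent contribution lands on the favourable side of the estimate.

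For step (i), I would use the Galerkin analogue of the formal identity \eqref{eq:Itovpmu1} --- now rigorous, since $(v^n_i,v^n_e,w^n)$ solves a genuine finite-dimensional SDE --- obtained by testing the $v^n_i$- and $v^n_e$-equations against $v^n_i$, $v^n_e$ and the $w^n$-equation against $w^n$, then forming the combination $\mu\bigl(\norm{v^n}_{L^2}^2+\eps\norm{v^n_i}_{L^2}^2+\eps\norm{v^n_e}_{L^2}^2\bigr)+\norm{w^n}_{L^2}^2$ with $\mu$ the constant of {\bf (A.4)}. The diffusion terms yield the good dissipation $-c^{-1}\bigl(\norm{\nabla v^n_i}_{L^2}^2+\norm{\nabla v^n_e}_{L^2}^2\bigr)$ by the coercivity of $\bM_{i,e}$ in {\bf (A.2)}, which is uniform in the frozen coefficient $\nabla\bu^n$, so this step does \emph{not} require the mechanical estimates. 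The reaction terms are controlled by expanding $\Ion(v,w)=I_1(v)+I_2(v)w$ and $H(v,w)=h(v)+c_{H,1}w$ and using the coercivity $I_1(v)v\ge\underline{c}_I\abs v^4-c_{I,2}\abs v^2$ to retain a quartic term $\underline{c}_I\norm{v^n}_{L^4}^4$ on the good side; the genuinely cubic cross term $\int(v^n)^2w^n$ is absorbed into $\norm{v^n}_{L^4}^4$ via Young's inequality with a small constant, while the remaining reaction terms, the noise quadratic variations $\tfrac{2}{2+\eps}\norm{\beta_v(v^n)}_{L_2(\U,L^2)}^2$ and $\norm{\beta_w(v^n)}_{L_2(\U,L^2)}^2$, and the forcing $2\mu(v^n,\Iap)$ are all $\le C\bigl(1+\norm{v^n}_{L^2}^2+\norm{w^n}_{L^2}^2+\norm{\Iap}_{L^2(\cO)}^2\bigr)$ by {\bf (A.4)}, {\bf (A.6)} and \eqref{eq:noise-cond2}. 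After integrating in $t$, taking $\sup_{[0,T]}$ and then expectations, the stochastic integrals are estimated by BDG and a small multiple of $\E\sup_t\bigl(\mu\norm{v^n}_{L^2}^2+\norm{w^n}_{L^2}^2\bigr)$ is absorbed; Gr\"onwall then gives \eqref{Gal:est3}, and the retained dissipation and quartic terms give \eqref{Gal:est2-bis}. Estimate \eqref{Gal:est2-new} follows since $v^n_e$ has zero spatial mean by construction of the $\mu_l$, so $\norm{v^n_e}_{L^2}\le C\norm{\nabla v^n_e}_{L^2}$ by Poincar\'e--Wirtinger and $\norm{v^n_i}_{L^2}\le\norm{v^n}_{L^2}+\norm{v^n_e}_{L^2}$.

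Steps (ii)--(iv) are lighter. For (ii): since $w^n\in\Bbb{W}_n$ is spanned by Neumann--Laplace eigenfunctions, $\norm{\nabla w^n}_{L^2}^2=\sum_l\lambda_l\abs{w^n_l}^2$, and It\^o's formula applied to this quantity, together with $\nabla H(v^n,w^n)=h'(v^n)\nabla v^n+c_{H,1}\nabla w^n$ and the Lipschitz/summability regularity of $h$ and $\beta_{w,k}$ in {\bf (A.4)}--{\bf (A.6)}, bounds the drift by $C\bigl(\norm{\nabla v^n}_{L^2}^2+\norm{\nabla w^n}_{L^2}^2\bigr)$; BDG, absorption, Gr\"onwall, \eqref{Gal:est2-bis} (controlling $\E\int_0^T\norm{\nabla v^n}_{L^2}^2\dt$) and $\E\norm{w_0}_{H^1(\cO)}^2<\infty$ give the $w^n$-part of \eqref{Gal:est2-mechanic-new}. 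For (iii): the $\gamma^n$-equation is the pathwise linear ODE $\pt\gamma^n=\eta_1\beta w^n-\eta_1\eta_2\gamma^n$ in $\Bbb{W}_n$, so a pathwise Gr\"onwall on $\norm{\gamma^n(t)}_{L^2}$ plus \eqref{Gal:est3} gives \eqref{Gal:est1-mechanic-gamma}, and the same argument applied to $\pt\nabla\gamma^n=\eta_1\beta\nabla w^n-\eta_1\eta_2\nabla\gamma^n$, using the $w^n$-part of \eqref{Gal:est2-mechanic-new} and $\E\norm{\gamma_0}_{H^1(\cO)}^2<\infty$, gives the $\gamma^n$-part. For (iv): testing the first Galerkin equation against $\bu^n$ and the second against $p^n$ and adding, the pressure--divergence terms cancel and (no noise in this block) one gets, pathwise, $\tfrac{\eps}{2}\tfrac{d}{dt}\bigl(\norm{\bu^n}_{L^2}^2+\norm{p^n}_{L^2}^2\bigr)+\langle\nabla\bu^n\sigma(\bx,\gamma^n),\nabla\bu^n\rangle+\alpha\norm{\bu^n}_{L^2(\ptl\cO)}^2=(\ff(t,\bx,\gamma^n),\bu^n)$; by the coercivity of $\sigma$ in {\bf (A.1)}, a Poincar\'e--trace inequality $\norm{\bu^n}_{H^1(\cO)}^2\le C\bigl(\norm{\nabla\bu^n}_{L^2}^2+\norm{\bu^n}_{L^2(\ptl\cO)}^2\bigr)$, and Young's inequality with a small constant on the right-hand side, one absorbs and integrates (using $\bu^n(0)=0$) to obtain $\eps\norm{\bu^n(t)}_{L^2}^2+\eps\norm{p^n(t)}_{L^2}^2+\int_0^t\norm{\bu^n}_{H^1(\cO)}^2\ds\le\eps\norm{p_0}_{L^2}^2+C\norm{\ff(\cdot,\cdot,\gamma^n)}_{L^2(\cO_T)}^2$; taking $\sup_t$, then expectations, and using $\E\norm{\ff(\cdot,\cdot,\gamma^n)}_{L^2(\cO_T)}^2\le C\bigl(1+\E\int_0^T\norm{\nabla\gamma^n}_{L^2}^2\dt+\norm{\bg}_{L^2(\cO_T)}^2\bigr)\le C$ and \eqref{eq:vie-init-ass-q0} for $p_0$ yields \eqref{Gal:est1-mechanic-u-p} and \eqref{Gal:est2-mechanic}.

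The hard part is step (i): extracting, at the Galerkin level, a stochastic energy balance for the degenerate pair $(v^n_i,v^n_e)$ in the combined form $\norm{v^n}_{L^2}^2+\eps\norm{v^n_i}_{L^2}^2+\eps\norm{v^n_e}_{L^2}^2$ (only $v^n=v^n_i-v^n_e$, and not $v^n_i$ or $v^n_e$ separately, is controlled in $L^\infty(0,T;L^2(\cO))$), while simultaneously taming the super-linear ionic reaction so as to retain the $L^4((0,T)\times\cO)$-gain and closing a Gr\"onwall loop whose constants are uniform in both $n$ and $\eps$; once this balance is in place, the remaining estimates are comparatively routine.
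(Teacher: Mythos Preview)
Your proposal is correct and follows essentially the same approach as the paper's proof: the same energy balance for $\norm{v^n}_{L^2}^2+\eps\norm{v^n_i}_{L^2}^2+\eps\norm{v^n_e}_{L^2}^2+\norm{w^n}_{L^2}^2$ combined with the coercivity of $\bM_{i,e}$ and the quartic gain from $I_1(v)v$, then BDG and Gr\"onwall; the same Poincar\'e--Wirtinger argument for $v^n_e$; the same use of the Neumann--Laplace eigenstructure (testing against $\Delta w^n$, equivalently your $\sum_l\lambda_l\abs{w^n_l}^2$) for the gradient bounds on $w^n$ and $\gamma^n$; and the same cancellation of the pressure--divergence terms in the mechanical block. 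The only cosmetic differences are that the paper spells out the coercivity $\norm{\bu}_{H^1}^2\lesssim\norm{\nabla\bu}_{L^2}^2+\norm{\bu}_{L^2(\partial\cO)}^2$ via a contradiction argument rather than citing it, and it does not introduce the weight $\mu$ in the $v$--$w$ combination (your direct expansion of $\Ion$ and $H$ makes that weight unnecessary anyway).
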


\begin{proof}
The formal relationship previously indicated 
in equation \eqref{eq:Itovpmu1} 
can be rigorously derived for the Faedo-Galerkin approximations. 
This derivation is closely related to the details found in 
\cite{Bendahmane:2018aa}. Consequently, this implies that:
\begin{align}\label{eq:v-product}
	d & \int_{\cO} \abs{v^n}^2
	+\eps \abs{v^n_i}^2
	+\eps \abs{v^n_e}^2 \dx
	\\ 
	\notag
	& \quad
	= \Biggl [ 
	- 2\int_{\cO}  \bM_i(\bx,\nabla\bu^n)
	\Grad v^n_i \cdot \Grad v^n_i \dx 
	- 2\int_{\cO} \bM_e(\bx,\nabla\bu^n) 
	\Grad v^n_e\cdot \Grad v^n_e \dx
	\\
	\notag
	& \quad \quad\quad
	- 2\int_{\cO} v^n \Ion(v^n,w^n) \dx 
	+2 \int_{\cO}\Iap^n\, v^n \dx
	\\ \notag
	& \quad \quad \quad\quad  
	+ \frac{2}{2+\eps}\sum_{k=1}^n 
	\int_{\cO} \abs{\beta_{v,k}(v^n)}^2  \dx\Biggr] \dt
	+ 2\int_{\cO} v^n \beta_v(v^n) \dx \dW^{v,n}(t),
\end{align}
where $\Iap^n:=\Pi_n \left[ \Iap \right]$. 
In a similar manner, since \eqref{eq:Itovpmu2} 
can also be rigorously derived for the Faedo-Galerkin 
approximations, we obtain
\begin{equation}\label{eq:w-ito}
	\begin{split}
		d\int_{\cO} \abs{w^n}^2 \dx
		& = \left[\,  2\int_{\cO} w^n H(v^n,w^n) \dx 
		+2 \sum_{k=1}^n\int_{\cO}
		\abs{\beta_{w,k}(v^n)}^2\dx \right]\dt
		\\ & \qquad 
		+2\int_{\cO} w^n \beta_w(v^n) \dW^{w,n}.
      \end{split}
\end{equation}
By adding \eqref{eq:v-product} and \eqref{eq:w-ito}, performing
a time integration, and making use of the 
ellipticity assumption {\bf (A.2)}, we reach the following result:
\begin{equation}\label{eq:v+w-tmp1}
	\begin{split}
		& \frac12\norm{v^n(t)}_{L^2(\cO)}^2
		+\sum_{j=i,e}\frac12\norm{\sqrt{\eps}v^n_j(t)}_{L^2(\cO)}^2
		+\frac12\norm{w^n(t)}_{L^2(\cO)}^2
		\\ & \quad \qquad
		+ \frac 1c  \sum_{j=i,e} \int_0^t \int_{\cO}
		\abs{\Grad v^n_j}^2 \dx \ds 
		\\ & \quad
		= \frac12\norm{v^n(0)}_{L^2(\cO)}^2
		+\sum_{j=i,e}\frac12\norm{\sqrt{\eps}v_{j}^n(0)}_{L^2(\cO)}^2
		+\frac12\norm{w^n(0)}_{L^2(\cO)}^2
		\\ & \quad\qquad
		+\int_0^t\int_{\cO} \bigl( w^n H(v^n,w^n) 
		- v^n \Ion(v^n,w^n)+\Iap^n\,v^n\bigr) \dx \ds 
		\\ & \quad \qquad
		+\frac{1}{2+\eps}\sum_{k=1}^n\int_0^t\int_{\cO}
		\abs{\beta^n_{v,k}(v^n)}^2\dx \ds
		+\sum_{k=1}^n\int_0^t\int_{\cO}
		\abs{\beta_{w,k}(v^n)}^2\dx \ds\\
		&  \quad \qquad
		+\int_0^t\int_{\cO} v^n \beta_v(v^n) \dx \dW^{v,n}(s)
		+\int_0^t\int_{\cO} w^n \beta_w(v^n) \dx \dW^{w,n}(s).
      \end{split}
\end{equation}
By utilizing {\bf (A.4)} and repeatedly applying Cauchy's inequality,
\begin{equation}\label{eq:nonlinear-est}
	w H(v,w) - vI(v,w) \leq -C_1 \abs{v}^4 
	+C_2\left(\abs{v}^2 + \abs{w}^2\right) + C_3,
\end{equation}
for some constants $C_1>0$ and $C_2,C_3\ge 0$. 
Combining \eqref{eq:nonlinear-est} with the 
assumptions {\bf (A.4)} and {\bf (A.6)} 
in \eqref{eq:v+w-tmp1}, we obtain that
\begin{equation}\label{eq:v+w-tmp2}
	\begin{split}
		& \norm{v^n(t)}_{L^2(\cO)}^2
		+\sum_{j=i,e}\norm{\sqrt{\eps}v^n_j(t)}_{L^2(\cO)}^2
		+\norm{w^n(t)}_{L^2(\cO)}^2
		\\ 
		& \quad\qquad
		+ \sum_{j=i,e} \int_0^t \int_{\cO}
		\abs{\Grad v^n_j}^2 \dx \ds+\int_0^t \int_{\cO}
		\abs{ v^n}^4 \dx \ds
		\\  
		& \quad \leq \norm{v^n(0)}_{L^2(\cO)}^2
		+\sum_{j=i,e}\norm{\sqrt{\eps}v_j^n(0)}_{L^2(\cO)}^2
		+\norm{w^n(0)}_{L^2(\cO)}^2
		\\ 
		& \quad\qquad
		+K_1(T,\Omega) 
		+K_2\int_0^t \norm{v^n(s)}_{L^2(\cO)}^2 \ds 
		+ K_3\int_0^t \norm{w^n(s)}_{L^2(\cO)}^2 \ds
		\\ 
		& \quad\qquad
		+2\int_0^t\int_{\cO} v^n \beta_v(v^n) \dx \dW^{v,n}(s)
		+2\int_0^t\int_{\cO} w^n \beta_w(v^n) \dx \dW^{w,n}(s).
	\end{split}
\end{equation}

Note that the martingale property of stochastic integrals 
guarantees that the expected value of each of the last 
two terms in \eqref{eq:v+w-tmp2} is zero. Consequently, by taking 
the expectation in \eqref{eq:v+w-tmp2} and 
applying Gr{\"o}nwall's inequality, 
\begin{align}
	\label{Gal:est1}
	& \E \left[ \norm{v^n(t)}_{L^2(\cO)}^2\right]
	+\E \left[ \norm{w^n(t)}_{L^2(\cO)}^2\right]
	\\ \notag
	& \qquad 
	+\sum_{j=i,e}\E \left[ \norm{\sqrt{\eps}
	v^n_j(t)}_{L^2(\cO)}^2 \right]
	\le C, \quad \forall t\in [0,T].
\end{align}
and that the sought-after estimates \eqref{Gal:est2-bis} hold.

The following equation holds pointwise in 
$t$, for all test functions $\xi^n\in \Bbb{W}_n$:
$$
\int_{\cO}\pt \gamma^n\, \xi^n \dx 
=\int_{\cO}G(\gamma^n,w^n) \, \xi^n \dx.
$$
By utilizing this equation in conjunction with {\bf (A.3)}, we 
can infer that
\begin{equation}\label{est2-mechanic}
	\frac{\eps}{2}\dfrac{d}{dt}\int_{\cO}|\gamma^n|^2\dx 
	=\int_{\cO}G(\gamma^n,w^n)\,\gamma^n \dx
	\leq \bar{K}_3 \int_{\cO} \abs{\gamma^n}^2
	+\abs{w^n}^2\dx,
\end{equation}
for some constant $\bar{K}_3$ independent of $n,\eps$. 
We apply Gr{\"o}nwall's inequality in \eqref{est2-mechanic}, take the 
supremum over $t\in [0,T]$ and subsequently take the expectation. 
We combine this with the $L^2$ estimate \eqref{Gal:est1} for $w^n$, 
which allows to conclude \eqref{Gal:est1-mechanic-gamma}.

Let $E$ denote the left-hand side of \eqref{Gal:est3}.
To establish the estimate \eqref{Gal:est3}, we proceed 
by taking the supremum over the interval $[0, T]$ in \eqref{eq:v+w-tmp2} 
and then the expectation $\E[\cdot]$. Utilizing 
\eqref{Gal:est1}, the Burkholder-Davis-Gundy inequality, the 
Cauchy-Schwarz inequality, the assumption \eqref{eq:noise-cond2} 
regarding $\beta_v$, Cauchy's product 
inequality, and once again \eqref{Gal:est1}, we obtain 
\begin{equation}\label{Esup1}
	\begin{split}
		E  & \leq K_4\left(1 + \sum_{z=v,w}
		\E\left[\, \sup_{t\in [0,T]} \abs{\int_0^t\int_{\cO} 
		z^n \beta_z^n(z^n) \dx \dW^{z,n}(s)}\, \right]\right)
		\\ &   \leq K_5 \left(1+\sum_{z=v,w}\E \left[ \left(\int_0^T 
		\sum_{k=1}^n \abs{\int_{\cO} z^n 
		\beta^n_{z,k}(z^n) \dx}^2 \dt \right)^{\frac12}\right]\right)
		\\ &  \le  K_6 +\sum_{z=v,w} 
		\delta \E \left[\sup_{t\in [0,T]}\int_{\cO} \abs{z^n}^2\dx\right]
		\\ & \qquad\qquad
		+K_7\sum_{z=v,w} 
		\E\left[ \int_0^T\int_{\cO} \abs{z^n}^2 \dx\dt
		+T \abs{\cO}\right]
		\\ &  \leq K_8 +  \delta\sum_{z=v,w} \E\left[ \sup_{t\in [0,T]} 
		\norm{z^n(t)}_{L^2(\cO)}^2\right]
		\leq K_8 + \delta E,
	\end{split}
\end{equation}
for any (small) $\delta>0$, where the constants $K_4,\ldots,K_8$ 
are independent of $n,\eps$. This establishes \eqref{Gal:est3}.

Next, we shall establish the validity of \eqref{Gal:est2-new}. 
Using \eqref{compat1} and applying the Poincar\'e inequality, we 
can find a constant $\tilde C > 0$ that depends on $\cO$ but is 
independent of $n$, $\eps$, $\omega$, and $t$, such that 
the following inequality holds for 
$(\omega, t) \in \Omega \times (0, T)$:
$$
\norm{v^n_e(\omega,t,\cdot)}_{L^2(\cO)}^2
\le \tilde C \norm{\nabla v^n_e(\omega,t,\cdot)}_{L^2(\Om)}^2.
$$
Consequently, utilizing \eqref{Gal:est2-bis}, we deduce that
\begin{equation}\label{Gal:est2-new-tmp}
	\E \left[ \int_0^T 
	\norm{v^n_e(\omega,t,\cdot)}_{L^2(\cO)}^2\dt\right]
	\lesssim 1.
\end{equation}
Since $v^n$ complies with \eqref{Gal:est1}, it follows 
that also $v^n_i$ satisfies \eqref{Gal:est2-new-tmp}. 
Thus, \eqref{Gal:est2-new} holds.

Next, let us establish the gradient 
estimates \eqref{Gal:est2-mechanic-new} for $\gamma^n$ and $w^n$.  
We can make the assumption, without loss of generality, that the 
basis functions of $\Bbb{W}_n$, see \eqref{eq:Galerkin-spaces}, 
are $C^2$ regular and have a zero normal derivative 
on the boundary $\partial \cO$. This implies that 
$\nabla \gamma^n\cdot \bn$ is equal to zero on $\partial \cO$. 
Consider that $\gamma^n$ resides within the 
finite-dimensional space $\mathbb{W}_n$, which is 
spanned by the eigenfunctions $\Set{\zeta_l}$ of the 
Neumann-Laplace operator, with eigenvalues $\Set{\lambda_l}$. 
We can compute 
the Laplacian of $\gamma^n$: 
$\Delta \gamma^n = \sum_{l=1}^n \bigl(\gamma,\zeta_l\bigr) 
\Delta\zeta_l=-\sum_{l=1}^n 
\lambda_l \bigr(\gamma,\zeta_l\bigr) \zeta_l$. 
Hence, $\Delta \gamma^n \in \mathbb{W}_n$. 
So, by considering $\xi^n:=\Delta \gamma^n \in \Bbb{W}_n$ ($t$ is treated 
as a parameter) in the equation \eqref{syst:weakRegTimeElast} 
for the activation variable $\gamma^n$, 
we can perform spatial integration by parts and 
apply the temporal chain rule. This allows us 
to derive the following equation:
\begin{equation}\label{eq:gamma-gradient}
	\begin{split}
		&\frac{d}{dt}\int_{\cO}\abs{\nabla \gamma^n}^2 \dx 
		=\int_{\cO}\nabla G(\gamma^n,w^n) 
		\cdot \nabla \gamma^n \dx
		\\ &\qquad 
		\overset{{\bf (A.3)}}{=}
		\int_{\cO}\eta_1 \beta \nabla w^n
		\cdot \nabla \gamma^n
		-\eta_1\eta_2 \abs{\nabla \gamma^n}^2 \dx
		\leq  K_9\int_{\cO} \abs{\nabla w^n}^2 \dx,
	\end{split}
\end{equation}
by an appropriate use of Young's product inequality. 
Thus, to establish the proof of \eqref{Gal:est2-mechanic-new} 
for both $\gamma^n$ and $w^n$, we only need 
to establish a gradient bound specifically for $w^n$. 

Given the SDE \eqref{syst:weakRegElast} 
for $w^n$, we deduce that
\begin{equation*}
	\begin{split}
		d\int_{\cO} w^n \xi^n\dx
		=\int_{\cO} 
		H(v^n,w^n)\xi^n\dx\dt
		+\sum_{k=1}^n\int_\cO
		\beta_{w,k}(v^n)\xi^n\dx \dW_k^{w}(t).
	\end{split}
\end{equation*}
for all test functions $\xi^n\in \Bbb{W}_n$. Following a similar line 
of reasoning as presented above for $\nabla \gamma^n$, we 
can now proceed to derive
\begin{align*}
	d\int_{\cO}\abs{\nabla w^n}^2 \dx 
	&= \int_{\cO} 
	\nabla H(v^n,w^n)
	\cdot \nabla w^n\dx\dt
	\\ & \qquad
	+\sum_{k=1}^n\int_\cO
	\nabla \beta_{w,k}(v^n) \cdot 
 	\nabla w^n \dx \dW_k^{w}(t).
\end{align*}
Recalling {\bf (A.4)}, 
$\abs{\partial_v H(v,w)}=\abs{h'(v)}\lesssim 1$ and 
$\abs{\partial_w H(v,w)}=c_{H,1}$. 
Using this, it is not difficult to deduce that
\begin{align*}
	d\int_{\cO}\abs{\nabla w^n}^2 \dx 
	& \leq K_{10}\left(\int_{\cO} 
	\abs{\nabla v^n}^2\dx
	+\int_{\cO} \abs{\nabla w^n}^2 \dx
	\right)\dt
	\\ & \qquad
	+\sum_{k=1}^n\int_\cO
	\beta_{w,k}'(v^n) \nabla v^n
	\cdot \nabla w^n \dx \dW_k^{w}(t),
\end{align*}
where the first term on the right-hand side 
can be controlled by \eqref{Gal:est2-bis} (keeping in mind 
that $v^n=v^n_i-v^n_e$). We can now proceed with 
a similar approach as employed 
in establishing the $L^2$ estimates \eqref{Gal:est3}. This involves 
utilizing the BDG and Gronwall inequalities. Let us 
here only discuss the application of the BDG inequality, 
following \eqref{Esup1}. By \eqref{eq:noise-cond}, we can observe 
that $\sum_{k=1}^n\absb{\beta_{w,k}'(v)}^2\lesssim 1$. This, 
when combined with the BDG inequality, 
yields the following result:
\begin{align*}
	& \E \left [\, \sup_{t\in [0,T]} 
	\abs{\int_0^t\sum_{k=1}^n \int_\cO
	\beta_{w,k}'(v^n) \nabla v^n
	\cdot \nabla w^n \dx \dW_k^{w}(s)} \, \right]
	\\ & \qquad \leq \E \left[ \left(\int_0^T 
	\sum_{k=1}^n \abs{\int_{\cO} 
	\beta_{w,k}'(v^n) \nabla v^n
	\cdot \nabla w^n \dx}^2 \dt \right)^{\frac12}\right]
	\\ & \qquad \leq  
	\delta \E \left[\sup_{t\in [0,T]}
	\int_{\cO} \abs{\nabla w^n(t)}^2\dx\right]
	+K_{11} \E\left[ \int_0^T\int_{\cO} \abs{\nabla v^n}^2 \dx\dt\right]
	\\ & \qquad  \overset{\eqref{Gal:est2-bis}}{\leq}  
	\delta \E \left[\sup_{t\in [0,T]}
	\int_{\cO} \abs{\nabla w^n(t)}^2\dx\right]+K_{12}.
\end{align*}
By utilizing this inequality, we can proceed in a similar manner as before, 
see \eqref{Esup1}. This approach leads us to the first part 
of \eqref{Gal:est2-mechanic-new} for $\nabla w^n$, which, via 
\eqref{eq:gamma-gradient}, implies the 
second part for $\nabla \gamma^n$.

Finally, we proceed with the derivation of the 
estimates for the Stokes-like part of the system \eqref{syst:weakRegElast}. 
Consider the following system, which holds pointwise in 
time $t$, for all test functions $\boldsymbol{\psi}^{n}\in \mathbf{H}_n$ 
and $\rho^{n}\in \Bbb{W}_n$:
\begin{equation}\label{syst:weakRegTimeElast}
	\begin{split}
		&\eps\int_{\cO}\pt\bu^n \cdot \boldsymbol{\psi}^{n}\dx
		+ \int_{\cO} \nabla\bu^n\, 
		\bsigma(\bx,\gamma^n)
		:\nabla \boldsymbol{\psi}^n
		-p^n\Grad\cdot\boldsymbol{\psi}^n\dx
		\\ 
		& \qquad  \qquad 
		+\int_{\partial \cO}\alpha\bu^n\cdot 
		\boldsymbol{\psi}^n \, dS(\bx)
		=\int_{\cO}\ff^n\cdot \boldsymbol{\psi^n} \dx,
		\\
		& \eps\int_{\cO}\pt p^n{\rho}^n \dx
		+\int_{\cO}\rho^n\nabla\cdot\bu^n\dx=0,
\end{split}\end{equation}
where $\ff^n=\ff(\bx,\gamma^n)$, cf.~\eqref{def:ff}.

Now, we replace $\boldsymbol{\psi}^n$ with 
$\bu^n$ and $\rho^n$ with $p^n$ 
in \eqref{syst:weakRegTimeElast}. 
We then sum the resulting equations to yield
\begin{equation}\label{est1-mechanic}
	\begin{split}
		&\frac{\eps}{2}\dfrac{d}{dt}\int_{\cO}|\bu^n|^2\dx
		+\int_{\cO} \nabla\bu^n
		\, \bsigma(\bx,\gamma^n)
		:\nabla \bu^n \dx
		\\ 
		& \qquad
		+\frac{\eps}{2}\dfrac{d}{dt}\int_{\cO}|p^n|^2\dx 
		+\alpha\int_{\partial \cO} \abs{\bu^n}^2 \, dS(\bx) 
		=\int_{\cO}\ff^n\cdot \bu^n \dx.
	\end{split}
\end{equation}

Let us introduce the continuous bilinear form
\begin{equation}\label{bilinear-forma}
	a(\bu,\mathbf{v})=a(\bu,\mathbf{v};\gamma)=\int_{\cO}
	\nabla\bu\bsigma(\bx,\gamma):\nabla \mathbf{v} \dx
	+\int_{\partial \cO} \alpha\bu
	\cdot \mathbf{v} \, dS(\bx).
\end{equation}
We claim that $a(\cdot,\cdot)$ is coercive on $(H^1(\cO))^3$. 
First, by the uniform ellipticity of 
$\boldsymbol{\sigma}$, see assumption {\bf{(A.1)}},
\begin{align*}
	a(\bu,\bu) & =\int_{\cO} \nabla\bu
	\boldsymbol{\sigma}(\bx,\gamma):\nabla\bu \dx
	+\alpha\norm{\bu}_{L^2(\partial\cO)}^2
	\gtrsim \norm{\nabla\bu}^2
	+\alpha\norm{\bu}_{L^2(\partial\cO)}^2.
\end{align*}
Next, we want to show that there exists a constant 
$ \nu>0$ such that 
$$
\nu \left( \norm{\nabla{\bu}}^2_{(L^2(\cO))^{3\times3}}
+\alpha\norm{\bu}^2_{L^2(\partial\cO)}\right)
\geq \norm{\bu}_{(H^1(\cO))^3}, 
\quad \forall \bu\in (H^1(\cO))^3.
$$ 
We argue by contradiction. Suppose, for each $m\in \N$, 
$\exists\;\bu_m\in (H^1(\cO))^3$ such that
$$
\norm{\nabla\bu_m}_{(L^2(\cO))^{3\times3}}^2
+\alpha\norm{\bu_m}_{(L^2(\partial\cO))^3}^2
\leq \frac{1}{m} \norm{\bu}_{(H^1(\cO))^3}^2.
$$
Setting 
$\mathbf{v}_m=\dfrac{\bu_m}{\|\bu_m\|_{(H^1(\cO))^3}}$, we obtain  
\begin{equation}\label{eq:vm-prop}
	\norm{\mathbf{v}_m}_{(H^1(\cO))^3}=1, 
	\quad
	\norm{\nabla\mathbf{v}_m}^2_{(L^2(\cO))^{3\times3}}
	+\alpha\norm{\mathbf{v}_m}^2_{(L^2(\partial\cO))^3}\leq\frac{1}{m},
\end{equation}
which implies that
 \begin{equation}\label{proof:coercivity1-2}
 	 \textrm{(i)} \; \;  
	 \text{$\nabla\mathbf{v}_m \tom 0$ 
	 in $(L^2(\cO))^{3\times3}$},
	 \quad 
	 \textrm{(ii)} \; \; 
	 \text{$\mathbf{v}_m
	 \tom 0$ in $(L^2(\partial\cO))^3$}.
 \end{equation}
On the other hand, since $\mathbf{v}_m$ 
is bounded in $(H^1(\cO))^3$ and $\cO\subset \R^3$ is bounded and 
smooth, there exists $\mathbf{v}\in (H^1(\cO))^3$ 
and a subsequence $\Set{\mathbf{v}_{m_k}}\subset (H^1(\cO))^3$ 
such that 
$$
\text{$\mathbf{v}_{m_k}\tok \mathbf{v}$ in $(L^2(\cO))^3$}, 
\quad
\text{$\nabla\mathbf{v}_{m_k}\tok\nabla\mathbf{v}$ 
in $(\cDp(\cO))^3$}.
$$
Utilizing \eqref{proof:coercivity1-2}(i), we infer that $\nabla\mathbf{v} = 0$. 
Given that $\cO$ is connected, this leads us to conclude 
that $\mathbf{v} = C$. Reapplying \eqref{proof:coercivity1-2}(i) and 
considering the convergence of $\mathbf{v}_{m_k} \to C$ in 
$(L^2(\cO))^3$, it follows that $\mathbf{v}_{m_k} \to C$ 
in $(H^1(\cO))^3$. This, in view of the continuity of 
the trace map, suggests that
$$
\text{$\mathbf{v}_{m_k}\tok C$ in $(L^2(\partial\cO))^3$.}
$$
On the other hand, by \eqref{proof:coercivity1-2}(ii), 
$\mathbf{v}_{m_k}\to 0$ in $(L^2(\partial\cO))^3$, and so $C=0$. 
However, this is contradictory to the initial assertion 
given by the first part of \eqref{eq:vm-prop}.
This affirms the claimed coercivity of $a(\cdot,\cdot)$.

By the coercivity of $a$ and Young's product inequality, we obtain 
from \eqref{est1-mechanic} that
\begin{equation}\label{ineq:elast-m}
	\frac{1}{2}\dfrac{d}{dt}\left(\norm{\sqrt{\eps}\bu^n}^2_{L^2(\cO)^3}
	+\norm{\sqrt{\eps}p^n}^2_{L^2(\cO)}\right)
	+C \norm{\bu^n}^2_{H^1(\cO)^3}
	\leq C \norm{\ff^n}^2_{L^2(\cO)},
\end{equation}
for some constant $C>0$ independent of $n,\eps$. By 
\eqref{def:ff} and the assumption {\bf (A.1)} on $\boldsymbol{\sigma}$, 
we have $\norm{\ff^n}^2_{L^2((0,T)\times \cO)}
\lesssim 1+\norm{\nabla \gamma^n}_{L^2((0,T)\times\cO)}^2$. 
By integrating \eqref{ineq:elast-m} over the interval $(0,t)$, where 
$0<t\leq T$, and subsequently taking the supremum over 
$t$, we can then apply the expectation operator. 
Keeping in mind that $\bu^n(0)=0$, $\norm{p_{0}^n}_{L^2(\cO)}
\leq \norm{p_0}_{L^2(\cO)}$, and using the estimate 
\eqref{Gal:est2-mechanic-new}, we arrive at 
\eqref{Gal:est1-mechanic-u-p}. 

By integrating \eqref{ineq:elast-m}, we can also confirm 
that \eqref{Gal:est2-mechanic} is satisfied.
\end{proof}

The subsequent corollary, encompassing higher 
moments estimates, arises as a direct consequence 
of Lemma \ref{lem:apriori-est} and the BDG inequality. 

\begin{cor}\label{cor:Lq0-est}
Along with the assumptions stated in Lemma \ref{lem:apriori-est}, 
let us further assume that \eqref{eq:vie-init-ass-q0} holds, 
with $q_0$ defined as specified in \eqref{eq:moment-est}. 
Then there exists a constant $C > 0$, which remains 
independent of both $n$ and $\eps$, satisfying 
the following estimates:
\begin{equation}\label{eq:Lq0-est}
	\begin{split}
		&\E\left[ 
		\, \norm{v^n(t)}_{L^\infty(0,T;L^2(\cO))}^{q_0}
		\, \right] 
		+\sum_{j=i,e}\E\left[ \, 
		\norm{\sqrt{\eps}v^n_j(t)}_{L^\infty(0,T;L^2(\cO))}^{q_0}
		\, \right]
		\\ &\qquad 
		+\E\left[ \, 
		\norm{w^n(t)}_{L^\infty(0,T;L^2(\cO))}^{q_0}
		\, \right]
		+\E\left[ \, 
		\norm{\gamma^n(t)}_{L^\infty(0,T;L^2(\cO))}^{q_0}
		\, \right]
		\\ & \qquad 
		+\E\left[ \, 
		\norm{\sqrt{\eps}\bu^n(t)}_{L^\infty(0,T;L^2(\cO))}^{q_0}
		\, \right]
		+\E\left[ \, 
		\norm{\sqrt{\eps}p^n(t)}_{L^\infty(0,T;L^2(\cO))}^{q_0}
		\, \right]\le C.
	\end{split}
\end{equation}
Moreover,
\begin{equation}\label{eq:Lq0-est2}
	\sum_{j=i,e}
	\E\left[ \,
	\norm{\Grad v^n_j}_{L^2((0,T)\times \cO)}^{q_0}
	\right] 
	+\E\left[\, \norm{v^n}_{L^4((0,T)\times \cO)}^{2q_0} 
	\, \right]  \le C.
\end{equation}
\end{cor}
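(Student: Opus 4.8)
The plan is to rerun, with exponent $q_0/2\ge 1$ in place of $1$, the Burkholder--Davis--Gundy (BDG)/Gr\"onwall scheme already used in the proof of Lemma~\ref{lem:apriori-est}. First I would set
$$
X^n(t):=\norm{v^n(t)}_{L^2(\cO)}^2
+\sum_{j=i,e}\norm{\sqrt{\eps}\,v^n_j(t)}_{L^2(\cO)}^2
+\norm{w^n(t)}_{L^2(\cO)}^2
+\sum_{j=i,e}\int_0^t\!\!\int_\cO\abs{\Grad v^n_j}^2\dx\ds
+\int_0^t\!\!\int_\cO\abs{v^n}^4\dx\ds ,
$$
so that the (rigorous, pre-expectation) energy inequality \eqref{eq:v+w-tmp2} reads
$$
X^n(t)\le X^n(0)+K_1(T,\Omega)
+K_2\!\int_0^t\!\norm{v^n(s)}_{L^2(\cO)}^2\ds
+K_3\!\int_0^t\!\norm{w^n(s)}_{L^2(\cO)}^2\ds
+M^n_v(t)+M^n_w(t),
$$
with $M^n_v(t)=2\int_0^t\!\int_\cO v^n\beta_v(v^n)\dx\,dW^{v,n}$ and $M^n_w(t)=2\int_0^t\!\int_\cO w^n\beta_w(v^n)\dx\,dW^{w,n}$. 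Since $v^n(0)=\Pi_n v_0$, $\sqrt{\eps}\,v^n_j(0)=\sqrt{\eps}\,\Pi_n v_{j,0}$ and $w^n(0)=\Pi_n w_0$ (recall $\eps\le 1$), the moment hypotheses \eqref{eq:moment-est}--\eqref{eq:vie-init-ass-q0} give $\E\bigl[X^n(0)^{q_0/2}\bigr]\lesssim\E\bigl[\norm{v_0}_{L^2(\cO)}^{q_0}\bigr]+\E\bigl[\norm{w_0}_{L^2(\cO)}^{q_0}\bigr]<\infty$. Taking $\sup_{s\le t}$, raising to the power $q_0/2$ (via $(\sum_i a_i)^{q_0/2}\le C_{q_0}\sum_i a_i^{q_0/2}$), applying $\E[\cdot]$, and using H\"older in time on the two drift integrals, I arrive at
$$
E^n(t):=\E\Bigl[\sup_{s\le t}X^n(s)^{q_0/2}\Bigr]
\le C\Bigl(1+\int_0^t E^n(s)\ds
+\sum_{z=v,w}\E\bigl[\sup_{s\le t}\abs{M^n_z(s)}^{q_0/2}\bigr]\Bigr),
$$
with $C$ depending only on $T$, $q_0$, $\cO$, the constants in \textbf{(A.1)}--\textbf{(A.6)} and on the $q_0$-th moment of $\Iapp$ --- in particular not on $n$ or $\eps$.

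For the stochastic remainder I would use the BDG inequality with exponent $q_0/2$ together with the noise bound \eqref{eq:noise-cond2}; since the predictable quadratic variation of $M^n_z$ is controlled, via Cauchy--Schwarz, by $\int_0^t\norm{z^n(s)}_{L^2(\cO)}^2\bigl(1+\norm{v^n(s)}_{L^2(\cO)}^2\bigr)\ds\lesssim\int_0^t(1+X^n(s))^2\ds$, this gives
$$
\sum_{z=v,w}\E\bigl[\sup_{s\le t}\abs{M^n_z(s)}^{q_0/2}\bigr]
\le C\,\E\Bigl[\Bigl(\int_0^t(1+X^n(s))^2\ds\Bigr)^{q_0/4}\Bigr]
\le C\,\E\Bigl[\sup_{s\le t}(1+X^n(s))^{q_0/4}\Bigl(\int_0^t(1+X^n(s))\ds\Bigr)^{q_0/4}\Bigr].
$$
Young's product inequality splits the last expression into $\delta\,\E[\sup_{s\le t}(1+X^n(s))^{q_0/2}]+C_\delta\,\E[(\int_0^t(1+X^n(s))\ds)^{q_0/2}]\le\delta\,(1+E^n(t))+C_\delta\,t^{q_0/2-1}\!\int_0^t(1+E^n(s))\ds$. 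Choosing $\delta$ small to absorb $\delta E^n(t)$ into the left-hand side and invoking Gr\"onwall's inequality yields $\sup_{t\le T}E^n(t)\le C$ with $C$ independent of $n$ and $\eps$. This is exactly \eqref{eq:Lq0-est} for $v^n$, $w^n$, $\sqrt{\eps}\,v^n_j$, and simultaneously \eqref{eq:Lq0-est2}, because $\norm{\Grad v^n_j}_{L^2(\cO_T)}^{q_0}$ and $\norm{v^n}_{L^4(\cO_T)}^{2q_0}$ are both dominated by $X^n(T)^{q_0/2}$. The bound for $\gamma^n$ then follows by raising to the power $q_0/2$ and taking $\E[\cdot]$ in the Gr\"onwall consequence $\sup_t\norm{\gamma^n(t)}_{L^2(\cO)}^2\lesssim\norm{\gamma_0}_{L^2(\cO)}^2+\int_0^T\norm{w^n(s)}_{L^2(\cO)}^2\ds$ of \eqref{est2-mechanic}, using the $w^n$-estimate just obtained and \eqref{eq:vie-init-ass-q0}.

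For the Stokes block the evolution \eqref{ineq:elast-m} is deterministic in time, so I would integrate it and use $\bu^n(0)=0$, $\norm{p^n(0)}_{L^2(\cO)}\le\norm{p_0}_{L^2(\cO)}$, and $\norm{\ff^n}_{L^2(\cO_T)}^2\lesssim 1+\norm{\Grad\gamma^n}_{L^2(\cO_T)}^2$ (from \eqref{def:ff} and the $C^1$ assumption in \textbf{(A.1)}) to obtain
$$
\sup_t\bigl(\norm{\sqrt{\eps}\,\bu^n(t)}_{L^2(\cO)}^2+\norm{\sqrt{\eps}\,p^n(t)}_{L^2(\cO)}^2\bigr)
+C\norm{\bu^n}_{L^2(0,T;H^1(\cO)^3)}^2
\lesssim \norm{p_0}_{L^2(\cO)}^2+1+\norm{\Grad\gamma^n}_{L^2(\cO_T)}^2 .
$$
Raising this to the power $q_0/2$, taking $\E[\cdot]$ and using \eqref{eq:vie-init-ass-q0} for $p_0$, matters reduce to controlling $\E[\norm{\Grad\gamma^n}_{L^2(\cO_T)}^{q_0}]$; by \eqref{eq:gamma-gradient} this is bounded in terms of $\E[\norm{\Grad w^n}_{L^\infty(0,T;L^2(\cO))}^{q_0}]$, and that, in turn, is obtained by the very same BDG/Gr\"onwall scheme applied to the identity for $d\!\int_\cO\abs{\Grad w^n}^2\dx$ derived in the proof of \eqref{Gal:est2-mechanic-new}, using $\sum_k\abs{\beta_{w,k}'(v^n)}^2\lesssim 1$ and the $q_0$-moment of $\norm{\Grad v^n_j}_{L^2(\cO_T)}$ established above. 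Assembling these pieces gives \eqref{eq:Lq0-est} and \eqref{eq:Lq0-est2}.

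The one step I expect to require genuine care --- the main obstacle --- is the absorption of $\delta E^n(t)$ into the left-hand side, which presupposes $E^n(t)<\infty$ to begin with. This is legitimate for each fixed $n$ and $\eps$: the Faedo--Galerkin system \eqref{SDEsystem} is finite-dimensional with coefficients of at most polynomial growth, so with the stopping times $\tau_R:=\inf\{t:X^n(t)>R\}$ one has $\E[\sup_{s\le t\wedge\tau_R}X^n(s)^{q_0/2}]<\infty$; I would run the entire argument on $[0,t\wedge\tau_R]$, obtain a bound independent of $R$ (and of $n,\eps$), and let $R\to\infty$ by monotone convergence. Everything else is bookkeeping: all constants stay independent of $n$ and $\eps$ just as in Lemma~\ref{lem:apriori-est}, the only new inputs being the $q_0$-integrability of the initial laws in \eqref{eq:moment-est} and of $\Iapp$.
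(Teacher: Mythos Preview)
Your proposal is correct and follows essentially the same route as the paper: raise the pathwise energy inequality \eqref{eq:v+w-tmp2} to the power $q_0/2$, control the stochastic integrals via the BDG inequality with a Young-type splitting, close with Gr\"onwall, and justify the absorption step with the stopping times $\tau_R$ and monotone convergence; then handle $\gamma^n$ and the Stokes block separately using the deterministic inequalities \eqref{est2-mechanic} and \eqref{ineq:elast-m}. Your write-up is in fact slightly more explicit than the paper's on two points the paper glosses over: the need for a $q_0$-moment bound on $\norm{\Grad w^n}_{L^\infty_tL^2_\bx}$ (to feed through \eqref{eq:gamma-gradient} and then \eqref{ineq:elast-m}), and the implicit higher-moment requirement on $\Iapp$ hidden in $K_1(T,\Omega)$.
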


\begin{proof}
We exponentiate both sides of the inequality \eqref{eq:v+w-tmp2} by $q_0/2$, 
recalling that $q_0>9/2$. Subsequently, we employ 
a series of elementary inequalities to arrive at
\begin{equation}\label{eq:v+w-tmp2-new1}
	\begin{split}
		&\norm{v^n(t)}_{L^2(\cO)}^{q_0}
		+\sum_{j=i,e}\norm{\sqrt{\eps}v^n_j(t)}_{L^2(\cO)}^{q_0}
		+\norm{w^n(t)}_{L^2(\cO)}^{q_0}
		\\  
		& \quad \leq C_1\norm{v^n(0)}_{L^2(\cO)}^{q_0}
		+C_1\sum_{j=i,e}\norm{\sqrt{\eps}v_j^n(0)}_{L^2(\cO)}^{q_0}
		+C_1\norm{w^n(0)}_{L^2(\cO)}^{q_0}
		\\ 
		& \quad\qquad
		+C_1 +C_1\int_0^t \norm{v^n(s)}_{L^2(\cO)}^{q_0} \ds 
		+C_1\int_0^t \norm{w^n(s)}_{L^2(\cO)}^{q_0} \ds
		\\ 
		& \quad\qquad
		+C_1\abs{\int_0^{t}\int_{\cO} v^n 
		\beta_v(v^n) \dx \dW^{v,n}(s)}^{q_0/2}
		\\ 
		& \quad\qquad
		+C_1
		\abs{\int_0^{t}\int_{\cO} w^n \beta_w(v^n) 
		\dx \dW^{w,n}(s)}^{q_0/2},
	\end{split}
\end{equation}
for some constant $C_1$ that is independent of $n$ (but 
dependent on $T$). For any $R>0$, we define the stopping time
$$
\tau^n_R := \inf\bigl\{t \in [0, T] : \norm{Y^n(t)} > R\bigr\} 
\wedge T,
$$
where $\inf \emptyset := \infty$ and 
$$
\norm{Y^n(t)}=\|v^n(t)\|_{\Bbb{W}_n}
+\|v^n_i(t)\|_{\Bbb{W}_n}
+\|v^n_e(t)\|_{\Bbb{L}_n}
+\|w^n(t)\|_{\Bbb{W}_n}.
$$
Given Lemma \ref{lem:fg-solutions}, it is clear that
$$
\lim_{R \to \infty} \tau^n_R = 
T, \quad \text{$P$-a.s.}, \quad \text{for each fixed $n \in \N$}.
$$
Following \eqref{Esup1}, we apply the BDG 
inequality along with several elementary inequalities. This 
enables us to derive the following two bounds for any $\delta > 0$:
\begin{align*}
	& \E\left[\sup_{t'\in \left[0,\tau^n_R\wedge t\right]}
	\abs{\int_0^{t'}\int_{\cO} v^n 
	\beta_v(v^n) \dx \dW^{v,n}(s)}^{q_0/2}\right]
	\\ & \,  
	\le \delta \E\left[\sup_{t'\in \left[0,\tau^n_R\wedge t\right]}
	\norm{v^n(t')}_{L^2(\cO)}^{q_0}\right]
	+C_2\int_0^t\E\left[
	\sup_{t'\in \left[0,\tau^n_R\wedge s\right]}
	\norm{v^n(t')}_{L^2(\cO)}^{q_0}\right]\, ds+C_3,
	\\ & 
	\E\left[\sup_{t'\in \left[0,\tau^n_R\wedge t\right]}
	\abs{\int_0^{t'}\int_{\cO} w^n 
	\beta_w(v^n) \dx \dW^{w,n}(s)}^{q_0/2}\right]
	\\ & \,  
	\le \delta \E\left[\sup_{t'\in \left[0,\tau^n_R\wedge t\right]}
	\norm{w^n(t')}_{L^2(\cO)}^{q_0}\right]
	+C_2\int_0^t
	\E\left[\sup_{t'\in \left[0,\tau^n_R\wedge s\right]}
	\norm{v^n(t')}_{L^2(\cO)}^{q_0}\right]\, ds+C_3,
\end{align*}
for certain constants $C_2, C_3$ that are independent of $n$ 
(though they depend upon $T$). By combining this 
with \eqref{eq:v+w-tmp2-new1}, applying Gronwall's inequality, 
and subsequently taking the limit as $R \to \infty$ through the 
application of the monotone convergence theorem, we arrive 
at the initial three estimates presented in \eqref{eq:Lq0-est}. 

After establishing these estimates, we return 
to the inequality \eqref{eq:v+w-tmp2}, which also features the two 
terms $\sum_{j=i,e} \int_0^t \int_{\cO}
\abs{\Grad v^n_j}^2 \dx \ds$ and $\int_0^t \int_{\cO}\abs{ v^n}^4 \dx \ds$.  
We deduce \eqref{eq:Lq0-est2} by exponentiating this 
inequality  with $q_0/2$ and then 
applying the estimates we derived earlier.

The estimation \eqref{eq:Lq0-est} of $\gamma^n$ is derived 
in a similar manner from \eqref{est2-mechanic}. 
The latter involves no stochastic integral, rendering 
the process comparatively simpler.  Likewise, by 
employing the equations in \eqref{est1-mechanic}, which 
are also devoid of stochastic integrals, we can establish the 
prescribed bounds \eqref{eq:Lq0-est} 
for the velocity $\bu^n$ and pressure $p^n$.
\end{proof}

To ensure the tightness of the probability laws 
of the Faedo-Galerkin solutions, we rely on an Aubin-Lions-Simon 
compactness lemma. To apply this lemma effectively, we will require 
the following ($n,\eps$ uniform) temporal translation estimates.

\begin{lem}\label{Space-Time-translate}
Suppose conditions {\bf (A.1)}-{\bf (A.5)}, \eqref{eq:noise-cond} 
and \eqref{eq:vie-init-ass-q0} hold. Let 
$v^n_i(t)$, $v^n_e(t)$, $v^n(t)$, $\bu^n(t)$, $p^n(t)$, 
$w^n(t)$, $\gamma^n(t)$, $t\in [0,T]$, 
satisfy \eqref{syst:weakRegElast} , \eqref{syst:ICreg}. 
There exists a constant $C>0$, independent of 
$n$ and $\eps$, such that for any 
sufficiently small $\delta>0$,
\begin{equation}\label{Time-translate}
	\begin{split}
		&\E \left[\, \sup_{0\leq \tau\leq \delta} 
		\norm{z^n(\cdot+\tau,\cdot)
		-z^n}_{L^2(0,T-\tau;L^2(\cO))}^2
		\, \right] \leq C \delta^{\frac14},
	\end{split}
\end{equation}
for $z^n=v^n,\, w^n,\, \gamma^n$.
\end{lem}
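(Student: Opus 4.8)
The plan is to work from the integrated Faedo--Galerkin equations \eqref{eq:approx-eqn-integrated} and, for each $z^n\in\{v^n,w^n,\gamma^n\}$, write the increment
\[
z^n(t+\tau)-z^n(t)=\int_t^{t+\tau}\Pi_n\bigl[\,\text{drift}^n(s)\,\bigr]\ds+\int_t^{t+\tau}\Pi_n\bigl[\,\text{noise}^n(s)\,\bigr]\dW(s)
\]
(the stochastic term being absent when $z^n=\gamma^n$), and then estimate the drift and stochastic contributions to $\sup_{0\le\tau\le\delta}\norm{z^n(\cdot+\tau)-z^n}_{L^2(0,T-\tau;L^2(\cO))}^2$ separately, using only the $n,\eps$-uniform a priori bounds of Lemma~\ref{lem:apriori-est} and Corollary~\ref{cor:Lq0-est}. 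For $w^n$ and $\gamma^n$ the relevant equations are those in \eqref{eq:approx-eqn-integrated} as they stand; for $v^n=v^n_i-v^n_e$ I would add the third and fourth equations of \eqref{eq:approx-eqn-integrated}, whereupon $\Iap$ cancels and one is left with the closed identity
\begin{equation*}
	\begin{split}
		(2+\eps)v^n(t)&=(2+\eps)v^n_0+2\int_0^t\Pi_n\bigl[\beta_v(v^n)\bigr]\dW^{v,n}(s)\\
		&\quad+\int_0^t\Pi_n\bigl[\Div\bigl(\bM_i(\bx,\Grad\bu^n)\Grad v^n_i\bigr)-\Div\bigl(\bM_e(\bx,\Grad\bu^n)\Grad v^n_e\bigr)-2\Ion(v^n,w^n)\bigr]\ds,
	\end{split}
\end{equation*}
all $\eps$-dependent prefactors being bounded uniformly for $\eps>0$.

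For $\gamma^n$ and $w^n$ the argument is elementary. By {\bf (A.3)}--{\bf (A.4)} the drifts $G(\gamma^n,w^n)$ and $H(v^n,w^n)$ have at most linear growth in their arguments, so by Lemma~\ref{lem:apriori-est} they are bounded in $L^2(\Omega;L^\infty(0,T;L^2(\cO)))$; Cauchy--Schwarz in $s$ followed by Fubini in $(s,t)$ then bounds the drift contribution to \eqref{Time-translate} by $C\delta^2$. For $w^n$ there is additionally $\int_t^{t+\tau}\Pi_n[\beta_w(v^n)]\dW^{w,n}$; writing it as $M^n(t+\tau)-M^n(t)$, applying for each fixed $t$ Doob's maximal inequality to the $L^2(\cO)$-valued martingale $\tau\mapsto M^n(t+\tau)-M^n(t)$, the It\^o isometry and the noise bound \eqref{eq:noise-cond2} (with $\norm{\Pi_n}\le1$) gives a contribution $\le C\delta$, uniformly in $t$; integrating in $t$ preserves this. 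Since $\delta\le\delta^{1/4}$ for $\delta\le1$, \eqref{Time-translate} follows for $z^n=w^n,\gamma^n$ (with the stronger rate $C\delta$).

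The variable $z^n=v^n$ is the main obstacle: the diffusion terms $\Pi_n[\Div(\bM_j\Grad v^n_j)]$ lie only in $(H^1(\cO))^*$, and $\Ion(v^n,w^n)$, being cubic in $v^n$, is controlled only in $L^{6/5}(\cO)\hookrightarrow(H^1(\cO))^*$ (via $H^1(\cO)\hookrightarrow L^6(\cO)$). I would therefore first prove a translation estimate in the weaker space $(H^1(\cO))^*$: using $\norm{\Div(\bM_j\Grad v^n_j)}_{(H^1)^*}\lesssim\norm{\Grad v^n_j}_{L^2}$ and $\norm{\Ion(v^n,w^n)}_{(H^1)^*}\lesssim 1+\norm{v^n}_{L^{18/5}}^3+\dots\lesssim 1+\norm{v^n}_{L^2}^{1/3}\norm{v^n}_{L^4}^{8/3}+\dots$ (a Gagliardo--Nirenberg interpolation), the estimate \eqref{Gal:est2-bis} together with Corollary~\ref{cor:Lq0-est} (this is where $q_0>9/2$ is needed, to close a H\"older inequality in $\omega$) shows that the drift of the $v^n$-equation is bounded in $L^2\bigl(\Omega;L^1(0,T;(H^1(\cO))^*)\bigr)$ uniformly in $n,\eps$. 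Young's convolution inequality $\bigl\|\int_\cdot^{\cdot+\tau}g\,ds\bigr\|_{L^2_t}\le\tau^{1/2}\norm{g}_{L^1_t}$, combined with the Doob/It\^o-isometry bound now applied in $(H^1(\cO))^*$ (the chosen eigenbasis of Section~\ref{sec:approx-sol} makes $\Pi_n$ a contraction there as well), yields
\[
\E\left[\,\sup_{0\le\tau\le\delta}\norm{v^n(\cdot+\tau)-v^n}_{L^2(0,T-\tau;(H^1(\cO))^*)}^2\,\right]\le C\delta^{1/2}.
\]

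To finish, I would convert this into the claimed $L^2$-rate by interpolation: from $\norm{f}_{L^2(\cO)}^2\le\norm{f}_{(H^1(\cO))^*}\norm{f}_{H^1(\cO)}$, Cauchy--Schwarz in $t$ and then in $\omega$, and the uniform bound $\E\norm{v^n}_{L^2(0,T;H^1(\cO))}^2\le C$ (which follows from \eqref{Gal:est3}, \eqref{Gal:est2-bis} since $v^n=v^n_i-v^n_e$),
\begin{align*}
	\E\left[\,\sup_{0\le\tau\le\delta}\norm{v^n(\cdot+\tau)-v^n}_{L^2(0,T-\tau;L^2(\cO))}^2\,\right]
	&\le \left(\E\left[\,\sup_{0\le\tau\le\delta}\norm{v^n(\cdot+\tau)-v^n}^2_{L^2(0,T-\tau;(H^1(\cO))^*)}\,\right]\right)^{1/2}\\
	&\quad\times\left(4\,\E\norm{v^n}_{L^2(0,T;H^1(\cO))}^2\right)^{1/2}\;\le\;C\delta^{1/4}.
\end{align*}
The delicate bookkeeping is entirely in this $v^n$-chain --- establishing the $L^1_t(H^1)^*$ drift bound with a square-integrable (in $\omega$) majorant, which is precisely what forces the use of the higher-moment Corollary~\ref{cor:Lq0-est}, and carrying the $\sup_{0\le\tau\le\delta}$ through Young's inequality and the interpolation --- whereas for $w^n$ and $\gamma^n$ everything is routine. (Tracing constants would in fact give a power better than $\tfrac14$, but only a positive power is needed for the subsequent Aubin--Lions--Simon compactness argument.)
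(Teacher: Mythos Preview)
Your proposal is correct and follows essentially the same route as the paper, which simply defers to \cite[Lemma~6.3]{Bendahmane:2018aa} and remarks that the only new feature here---the dependence of $\bM_{i,e}$ on $\nabla\bu^n$---is harmless because of the uniform $L^\infty$ bound in {\bf (A.2)}; your sketch makes this explicit and otherwise reproduces the standard argument (weak-norm increment estimate for $v^n$ via the equation, then interpolation against the $L^2_tH^1_\bx$ bound, with $w^n,\gamma^n$ handled directly in $L^2$). One harmless slip: your intermediate $(H^1)^*$ rate should be $C\delta$ rather than $C\delta^{1/2}$ (both the Young-convolution drift bound, once squared, and the Doob/It\^o noise bound give a factor $\delta$), which after interpolation yields $C\delta^{1/2}$---as you note, any positive power suffices for the subsequent tightness argument, so the stated $\delta^{1/4}$ is in either case recovered.
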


\begin{proof}
We refer to Lemma 6.3 in \cite{Bendahmane:2018aa} 
for further details. The primary distinction from the 
study \cite{Bendahmane:2018aa} is that the 
operators $\Pi_n\left[\Div\bigl(\bM_j(\bx,\nabla\bu^n)
\Grad v^n_j \bigr)\right]$, where $j=i,e$, 
nonlinearly depend on the displacement 
gradient $\nabla \bu$. Nevertheless, given the $L^\infty$-boundedness 
of $\bM_{i,e}$, as stipulated in assumption {\bf (A.2)}, the 
reasoning proposed by \cite{Bendahmane:2018aa} remains 
applicable to $v^n$. The argument for the $w^n$ 
variable does not necessitate any alterations, and the 
added variable $\gamma^n$ can be addressed 
in a similar (simpler) manner to $w^n$.
\end{proof}

\section{Tightness and a.s.~representations}\label{sec:tightness}
The objective is to establish the existence of a  
solution, drawing inspiration from some techniques 
previously applied to the deterministic system \cite{Bendahmane:2006au}. 
Utilizing the Faedo-Galerkin approach, we have constructed approximate 
solutions originating from a nondegenerate auxiliary system 
and we wish to deduce their convergence by employing 
the stochastic compactness method. 
In contrast to the deterministic case, where the a priori estimates 
facilitate strong compactness across the time and space 
variables ($t,x$)—which is essential 
for addressing the nonlinear terms—the stochastic framework introduces 
the complexity of the probability variable $\omega$, rendering strong 
compactness unattainable. The standard resolution involves 
deducing the weak compactness of the probability laws 
of the Faedo-Galerkin solutions through tightness 
and Prokhorov’s theorem, followed by constructing  
almost surely convergent sequences via 
Skorokhod's representation theorem. 
This enables us to demonstrate that the limit variables 
form a weak martingale solution.  Due to the degenerate 
nature of our model, we observe only weak ($H^1$) convergence 
for the intra- and extracellular potentials. As a result, the probability 
measures of these potentials live on a Banach space 
equipped with the weak topology, which is 
notably non-Polish. To handle this issue, we utilize 
Jakubowski's version \cite{Jakubowski:1997aa} of the 
Skorokhod representation theorem, tailored for non-metric spaces.

Let us establish the required tightness as $n\to \infty$ (with 
$\eps$ kept fixed) of the probability laws generated by the 
Faedo--Galerkin solutions $\Set{\bigl(V^n,W^n,V_0^n\bigr)}_{n\in \N}$, where
\begin{equation}\label{eq:def-U-W-U0}
	\begin{split}
		& V^n=v^n_i,
		\, v^n_e,
		\, v^n,
		\, w^n,
		\, \gamma^n, 
		\,p^n, 
		\, \bu^n, 
		\\ & 
		W^n = W^{v,n},\, W^{w,n},
		\\ & 
		V_0^n =v_{i,0}^n,
		\, v_{e,0}^n, 
		\, v_0^n, 
		\, w_0^n,
		\, \gamma_0^n,
		\, p^n_0, 
		\, \bu_0^n \, (\equiv 0).
	\end{split}
\end{equation}
Let us introduce the path space
$$
\cX=\cX_V\times \cX_W\times \cX_{V_0},
$$
where
\begin{align*}
	& \cX_V =\Bigl[ \bigl(L^2_tH^1_x\bigr)_w\Bigr]^2\times 
	\Bigl [ L^2_{t,x}\Bigr]^3 
	\times  \bigl(L^2_tL^2_\bx\bigr)_w
	\times \bigl(L^2_t[H^1_\bx]^3\bigr)_w,
	\\ & \qquad\qquad
	\text{for $\left(v^n_i,v^n_e\right)$, 
	\, $\left(v^n,w^n,\gamma^n\right)$, 
	\, $p^n$, \, $\bu^n$},
	\\ & 
	\cX_W=\Bigl[C_t \U_0\Bigr]^2,
	\quad \text{for $\bigl(W^{v,n},W^{w,n}\bigr)$},
	\qquad 
	\cX_{V_0} =\Bigl[ L^2_\bx\Bigr]^7,
	\quad \text{for $V_0^n$}.
\end{align*}
The subscript ``$w$" signifies that the topology is weak.

Let $\cB(\cX)$ symbolize the $\sigma$-algebra of 
Borel subsets of $\cX$. We define the $\left(\cX,\cB(\cX)\right)$-valued 
measurable function $\Phi_n$ on the 
measure space $\left(\Omega,\cF,P\right)$ as follows:
$$
\Phi_n(\omega)
= \bigl(V^n(\omega),W^n, V_0^n(\omega)\bigr), 
\quad \omega\in \Omega.
$$
We utilize $\cL_n$ to represent the joint probability law 
of $\Phi_n$ on $\left(\cX,\cB(\cX)\right)$. 
Furthermore, the notation for the marginals of $\cL_n$ substitutes 
the subscript $n$ with the variable being considered. 
For example, $\cL_{v^n}$ stands for the law 
of the variable $v^n$ in the $L^2_{t,x}$ space. 
This convention applies uniformly across all other variables.

The objective is to affirm the tightness of the 
joint laws $\Set{\cL_n}$. This involves 
producing, for every $\kappa>0$, a compact 
set $\cK_\kappa$ within $\cX$ such that 
$\cL_n\left(\cK_\kappa\right)>1-\kappa$. 
This requirement is equivalent  to
\begin{equation}\label{eq:tight-def}
	\cL_n\left(\cK_\kappa^c\right)
	=P\bigl(\Set{\omega\in \Omega: \Phi_n(\omega)
	\in \cK_\kappa^c}\bigr)\leq \kappa, 
	\quad \text{uniformly in $n$}.
\end{equation}
By invoking Tychonoff’s theorem, this 
tightness is a direct consequence of the 
tightness of the product measures $\cL_{v^n_i} 
\otimes \cL_{v^n_e} \otimes \cdots 
\otimes \cL_{\bu_0^n}$ on $\cX$, under 
the product $\sigma$-algebra. Thus, to substantiate 
\eqref{eq:tight-def}, it suffices to identify a compact set 
$\cK_{v^n_i,\kappa}$ in $\bigl(L^2_tH^1_x\bigr)_w$ such 
that $\cL_{v^n_i}\bigl(\cK_{v^n_i,\kappa}^c\bigr)
\leq \kappa$, for each $\kappa>0$,  and 
analogously for the remaining variables 
$v^n_e,\ldots,\bu_0^n$. Indeed, we have the following lemma:

\begin{lem}\label{lem:tight}
Equipped with the estimates in Lemmas \ref{lem:apriori-est} 
and \ref{Space-Time-translate}, the sequence of laws 
$\Set{\cL_n}_{n\ge1}$ is tight on $\left(\cX,\cB(\cX)\right)$, 
in the sense that \eqref{eq:tight-def} holds.
\end{lem}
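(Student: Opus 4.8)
The plan is to establish tightness variable-by-variable, reducing everything to the classical tightness criteria in the relevant topologies. As noted in the text before the lemma statement, by Tychonoff's theorem it suffices to show that each marginal law $\cL_{z^n}$ (for $z^n$ ranging over $v^n_i, v^n_e, v^n, w^n, \gamma^n, p^n, \bu^n$, the two Wiener processes, and the seven initial-data components) is tight on the corresponding factor space. The initial-data laws $\cL_{V_0^n}$ on $[L^2_\bx]^7$ are tight because each component converges (or is constant, for $\bu_0^n\equiv0$) in $L^2(\cO)$: indeed $\Pi_n z_0\to z_0$ in $L^2(\cO)$ a.s.\ and in expectation by \eqref{eq:vie-init-ass-q0}, so each law is supported (up to $\kappa$) on a small $L^2$-ball around a fixed convergent sequence, hence tight; alternatively, since the laws converge weakly they form a tight family by Prokhorov. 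The Wiener-process laws $\cL_{W^{v,n}}, \cL_{W^{w,n}}$ on $C([0,T];\U_0)$ are tight since, as recorded just after \eqref{syst:ICreg}, $W^{\kappa,n}\to W^\kappa$ in $L^2(\Omega;C([0,T];\U_0))$, so again weak convergence of laws gives tightness.

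The substantive work is the tightness of the laws of the solution components. First I would treat $\bu^n$ in $\bigl(L^2_t[H^1_\bx]^3\bigr)_w$ and $p^n$ in $\bigl(L^2_tL^2_\bx\bigr)_w$: on a reflexive Banach space equipped with its weak topology, balls are (weakly) compact and metrizable-on-balls, so for any $\kappa>0$ the Chebyshev bound $P\bigl(\norm{\bu^n}_{L^2_tH^1_x}>R\bigr)\le R^{-2}\E\norm{\bu^n}^2_{L^2_tH^1_x}\le R^{-2}C$, coming from \eqref{Gal:est2-mechanic}, makes the closed ball of radius $R=\sqrt{C/\kappa}$ a compact set carrying mass $\ge1-\kappa$; the same argument with \eqref{eq:Lq0-est} handles $p^n$. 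Next, for $v^n_i$ and $v^n_e$ in $\bigl(L^2_tH^1_x\bigr)_w$ I would do exactly the same using \eqref{Gal:est2-bis} (and \eqref{Gal:est2-new} for the full $H^1$ norm of $v^n_e$ via the Poincar\'e inequality, likewise $v^n_i$). These "weak-topology" tightness arguments are essentially automatic once the uniform moment bounds are in hand.

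The genuinely nontrivial cases are $v^n, w^n, \gamma^n$ viewed in the strong space $L^2((0,T)\times\cO)=L^2_{t,x}$, since there a uniform norm bound alone is not enough — one needs compactness. Here I would invoke an Aubin--Lions--Simon-type lemma, exactly as flagged in the remark preceding Lemma \ref{Space-Time-translate}: fix a large parameter and define, for each variable $z^n$, the set
\begin{equation*}
	\cK_{z,\kappa}
	=\Set{z\in L^2_{t,x}:
	\norm{z}_{L^2(0,T;H^1(\cO))}\le R_\kappa,\,\,
	\sup_{0\le\tau\le\delta}\norm{z(\cdot+\tau)-z}_{L^2(0,T-\tau;L^2(\cO))}\le \omega(\delta)\,\,\forall\delta>0},
\end{equation*}
where $\omega(\delta)\downarrow0$ is chosen so that $R_\kappa$ and the modulus jointly absorb the constants in \eqref{Gal:est2-bis}, \eqref{Gal:est2-mechanic-new} and the temporal-translation estimate \eqref{Time-translate}. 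By the compactness criterion in $L^2_{t,x}$ (boundedness in $L^2_tH^1_x$ gives spatial compactness via Rellich, plus the uniform vanishing of time-translations gives temporal equicontinuity in $L^2_{t,x}$), the set $\cK_{z,\kappa}$ is relatively compact in $L^2((0,T)\times\cO)$; taking its closure makes it compact. The Chebyshev/Markov bound then gives $P\bigl(z^n\notin\cK_{z,\kappa}\bigr)\le R_\kappa^{-2}\E\norm{z^n}^2_{L^2_tH^1_x}+\omega(\delta)^{-1}\E\bigl[\sup_{\tau\le\delta}\norm{z^n(\cdot+\tau)-z^n}^2_{L^2_tL^2_x}\bigr]$, and since \eqref{Time-translate} yields $\le C\delta^{1/4}$ uniformly in $n,\eps$, we may first fix $\delta$ small, then $R_\kappa$ large, to make the total probability $\le\kappa$. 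For $v^n$ one must remember $v^n=v^n_i-v^n_e$, so its $L^2_tH^1_x$ bound follows from \eqref{Gal:est2-bis}; for $w^n,\gamma^n$ the $\nabla$-bounds are \eqref{Gal:est2-mechanic-new}. The main obstacle — and the only place real care is needed — is making sure the Aubin--Lions--Simon criterion is applied with the correct triple of spaces and that the time-translation modulus from Lemma \ref{Space-Time-translate} (which controls only $L^2_tL^2_x$-translations, not $H^{-1}$-translations) is exactly what the chosen compactness lemma requires; this is the standard subtlety in such stochastic compactness arguments, and it is handled precisely as in \cite{Bendahmane:2018aa}. Assembling the seven-plus-nine individual tightness statements and applying Tychonoff gives \eqref{eq:tight-def}, completing the proof.
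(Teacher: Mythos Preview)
Your overall strategy matches the paper's: weak-compactness-plus-Chebyshev for the variables living in weak topologies, Prokhorov-via-convergence for the Wiener processes and initial data, and an Aubin--Lions--Simon argument for $v^n,w^n,\gamma^n$ in the strong space $L^2_{t,x}$. The first two blocks are fine.

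The gap is in your quantification of the time-translation compactness. Your compact set $\cK_{z,\kappa}$ requires the modulus condition $\sup_{0\le\tau\le\delta}\norm{z(\cdot+\tau)-z}\le\omega(\delta)$ to hold for \emph{every} $\delta>0$, yet your Chebyshev bound
\[
P\bigl(z^n\notin\cK_{z,\kappa}\bigr)\le R_\kappa^{-2}\E\norm{z^n}^2_{L^2_tH^1_x}+\omega(\delta)^{-1}\E\bigl[\sup_{\tau\le\delta}\norm{z^n(\cdot+\tau)-z^n}^2_{L^2_tL^2_x}\bigr]
\]
controls failure at a \emph{single} $\delta$. The sentence ``first fix $\delta$ small, then $R_\kappa$ large'' confirms you are treating one scale at a time, and that does not bound the probability of the complement of $\cK_{z,\kappa}$, which is a union over all $\delta$. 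A single-scale translation bound together with an $L^2_tH^1_x$ bound is not a compactness criterion in $L^2_{t,x}$.

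The paper (following Bensoussan and \cite{Bendahmane:2018aa}) repairs this by discretizing: choose sequences $r_m\downarrow0$, $\nu_m\downarrow0$ with $\sum_m r_m^{1/4}/\nu_m^2<\infty$, define
\[
\cZ_{(r_m,\nu_m)}=\Bigl\{z:\ \norm{z}_{L^\infty_tL^2_x\cap L^2_tH^1_x}<\infty,\ \sup_{m}\tfrac{1}{\nu_m}\sup_{0\le\tau\le r_m}\norm{z(\cdot+\tau)-z}_{L^2_{t,x}}<\infty\Bigr\},
\]
and then apply a union bound over $m$: by \eqref{Time-translate} each term contributes $\lesssim R^{-2}r_m^{1/4}/\nu_m^2$, and the summability condition makes the series converge uniformly in $n$. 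This is the piece your argument is missing; invoking \cite{Bendahmane:2018aa} at the end does not substitute for it, since the bound you actually wrote down is the wrong one.
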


\begin{proof}
By weak compactness of bounded sets in 
$L^2_tH^1_\bx$,
$$
K_R=\Set{v: \norm{v}_{L^2_tH^1_\bx}\leq R}
$$
is compact in $\bigl(L^2_tH^1_\bx\bigr)_w$, for any $R>0$. 
By Chebyshev's inequality, 
\begin{align*}
	& P\left(\Set{\omega\in \Omega: v^n_i(\omega)\in K_R^c}\right)
	= P\left(\Set{\omega\in \Omega: 
	\norm{v^n_i(\omega)}_{L^2_tH^1_\bx}>R}\right)
	\\ & \qquad \qquad \leq \frac{1}{R^2}
	\E\left[\norm{v^n_i(\omega)}_{L^2_tH^1_\bx}^2\right]
	\overset{\eqref{Gal:est2-bis}, \eqref{Gal:est2-new}}{\lesssim} 
	\frac{1}{R^2}\toR 0.
\end{align*}
Hence, we can specify the required compact 
$\cK_{v^n_i,\kappa}\subset \bigl(L^2_tH^1_\bx\bigr)_w$ 
as $K_R$ for some suitable $R=R(\kappa)$, 
such that $\cL_{v^n_i}
\bigl(\cK_{v^n_i,\kappa}^c\bigr)\leq \kappa$. 
We can argue similarly (via weak compactness) 
for the variables $v^n_e$, $p^n$, and $\bu^n$.

Given that $W^{v,n}$ a.s.~converges in $C([0,T];\U_0)$ 
as $n\to \infty$, it follows that the laws $\Set{\cL_{W^{v,n}}}_{n\in \N}$ 
undergo weak convergence. This implies 
the tightness of $\Set{\cL_{W^{v,n}}}_{n\in \N}$. 
In other words, for any specified $\kappa>0$, we can find 
a compact set $\cK_{W^{v,n},\kappa}\subset C([0,T];\U_0)$ 
such that the measure of the complement of this set with 
respect to $\cL_{W^{v,n}}$ is $\leq \kappa$. 
This reasoning can be analogously applied to $W^{w,n}$.

Similar reasoning can be applied to the initial data variables
 $v_{i,0}^n$, $v_{e,0}^n$, $v_0^n$, $w_0^n$, $\gamma_0^n$, $p_0^n$, 
 $\bu_0^n$. By assumption, these variables are a.s.~convergent 
 in $L^2_\bx$ as $n\to \infty$. Consequently, their corresponding 
 laws also undergo weak convergence. This leads to the 
 conclusion that the laws of these variables are tight on $L^2_\bx$.

Our remaining task is to demonstrate the tightness of 
the laws of $v^n$, $w^n$, and 
$\gamma^n$ on $L^2_{t,\bx}$. This will be a direct 
consequence of the $L^2$ estimates given by 
\eqref{Gal:est3} and \eqref{Gal:est1-mechanic-gamma}, the $L^2$ 
gradient estimates specified in \eqref{Gal:est1-mechanic-u-p} 
and \eqref{Gal:est2-mechanic-new}, and the temporal 
continuity estimates provided by Lemma \ref{Space-Time-translate}, 
drawing upon the insights from \cite{Bensoussan:1995aa} 
and \cite{Bendahmane:2018aa}. Focusing on $v^n$ (although the 
variables $w^n$ and $\gamma^n$ follow suit in an 
analogous manner), consider any two sequences of 
positive numbers $r_m$ and $\nu_m$, which converge 
to zero as $m\to \infty$. 
We will proceed by defining the following set:
\begin{align*}
	\cZ_{(r_m,\nu_m)}
	:=\Biggl\{ & v \in L^\infty(0,T;L^2(\cO))
	\cap L^2(0,T;H^1(\cO)):
	\\ & \qquad
	\sup_{m\ge 1} \frac{1}{\nu_m}\sup_{0\le \tau\le r_m} 
	\norm{v(\cdot+\tau)-v}_{L^2\left(0,T-\tau;L^2(\cO)\right)}
	<\infty \Biggr\}.
\end{align*}
Note that $\cZ_{(r_m,\nu_m)}$ is compactly 
embedded in $L^2_{t,\bx}=L^2((0,T)\times \cO)$, which follows
from \cite[Theorem 3]{Simon:1987vn}.  Fix 
two sequences $r_m$, $\nu_m$ of positive numbers 
numbers tending to zero as $m\to \infty$ (independently of $n$), such that
\begin{equation}\label{eq:r-nu}
	\sum_{m=1}^\infty r_m^{\frac14}/\nu_m^2<\infty,
\end{equation}
and define the $L^2_{t,\bx}$-compact set
$$
K_R = \Set{v: \norm{v}_{\cZ^v_{(r_m,\nu_m)}}
\leq R},
$$
for a number $R>0$ that will be determined later. 
We have
\begin{align*}
	&P\bigl(\Set{\omega\in \Omega: v^n(\omega)\in K_R^c} \bigr)
	\\ & \quad
	\leq P\left(\Set{\omega\in \Omega: 
	\norm{v^n(\omega)}_{L^\infty\left(0,T;L^2(\cO)\right)}
	>R}\right)
	\\ & \quad\quad 
	+P\left(\Set{\omega\in \Omega: 
	\norm{v^n(\omega)}_{L^2\left(0,T;H^1(\cO)\right)}
	>R}\right)
	\\ & \quad\quad 
	+P\left(\Set{\omega\in \Omega: 
	\sup_{0\le \tau \le r_m} 
	\norm{v^n(\cdot+\tau)
	-v^n}_{L^2\left(0,T-\tau;L^2(\cO)\right)}>R \, \nu_m}\right)
	\\ & \quad
	=:P_1+P_2+P_3
	\quad \text{(for any $m\ge 1$)}.
\end{align*}
By Chebyshev's inequality, 
\begin{align*}
	& P_1 \leq \frac{1}{R^2}
	\E \left[\norm{v^n(\omega)}_{L^\infty\left(0,T;L^2(\cO)\right)}^2\right]
	\overset{\eqref{Gal:est1}}{\lesssim} \frac{1}{R^2}\toR 0, 
	\\ & P_2 \leq \frac{1}{R^2}
	\E\left[\norm{v^n(\omega)}_{L^2\left(0,T;H^1(\cO)\right)}^2\right]
	\overset{\eqref{Gal:est2-bis}}{\lesssim} \frac{1}{R^2}
	\toR 0,
	\\ & P_3 
	\leq \sum_{m=1}^\infty \frac{1}{R^2 \, \nu_m^2}
	\E\left[\, \sup_{0\le \tau \le r_m}
	\norm{v^n(\cdot+\tau)
	-v^n}_{L^2\left(0,T-\tau;L^2(\cO)\right)}^2 \right]
	\\ & \quad
	\overset{\eqref{Time-translate}}{\lesssim} 
	\frac{1}{R^2}\sum_{m=1}^\infty 
	\frac{r_m^{\frac14}}{\nu_m^2}
	\overset{\eqref{eq:r-nu}}{\lesssim} 
	\frac{1}{R^2} \toR 0.
\end{align*}
As a result, we can specify the required compact 
$\cK_{v^n,\kappa}\subset L^2_{t,\bx}$ 
as $K_R$ for some suitable $R=R(\kappa)$, 
such that $\cL_{v^n}
\bigl(\cK_{v^n,\kappa}^c\bigr)\leq \kappa$. 
We can argue similarly for $w^n$ 
and $\gamma^n$.
\end{proof}

The primary component of the stochastic compactness method is 
the tightness of the joint laws of the random 
variables under consideration, as demonstrated 
in Lemma \ref{lem:tight}. The Skorokhod 
a.s.~representation theorem enables the replacement of 
these random variables with versions that converge a.s., 
albeit defined on a new probability space. 
However, in our context, the direct application of the Skorokhod theorem 
is unfeasible due to the necessity to operate within non-metric spaces, 
specifically $\bigl(L^2_{t,\bx}\bigr)_w$ and $\bigl(L^2_tH^1_\bx\bigr)_w$. 
As a result, we employ a more recent version of the Skorokhod theorem 
that is applicable to quasi-Polish spaces, due to 
Jakubowski \cite{Jakubowski:1997aa}. A quasi-Polish space is 
defined as a Hausdorff space $\cX$ that 
enables a continuous injection into a Polish space. Notably, 
separable Banach spaces with weak topology are 
quasi-Polish. The Skorokhod-Jakubowski theorem and 
its application to SPDEs is further discussed in \cite{Brzezniak:2013aa,Brzezniak:2011aa,Ondrejat:2010aa}, showcasing 
some of the earliest uses of the theorem in this context. 
This theorem was also used in \cite{Bendahmane:2018aa} for the 
stochastic bidomain model. For further references, this paper 
may be consulted.

By the Skorokhod-Jakubowski representation theorem, 
we infer the existence of a new probability space, denoted 
as $\bigl(\tilde{\Omega},\tilde{\cF}, \tilde{P}\bigr)$, and new sets 
of random variables
\begin{equation}\label{eq:def-tvar0}
	\tilde \Phi_n=\bigl(\tilde V^n,\tilde W^n,\tilde V_0^n\bigr),
	\quad
	\tilde \Phi_\eps=\bigl(\tilde V_\eps,\tilde W_\eps,\tilde V_{0,\eps}\bigr),
\end{equation}
where $\Set{n=n_j}_{j\in \N}$ represents a subsequence that 
converges to infinity as $j\to \infty$. For ease of notation, 
we use $n$ instead of $n_j$ to avoid relabelling the subsequence.
The new random variables hold respective joint laws $\cL_n$ and $\cL$. 
Specifically, the law of $\tilde \Phi_n$ coincides with that of $\Phi_n$. 
Crucially, the following almost sure convergence is valid:
\begin{equation}\label{eq:def-tvar0-conv}
	\tilde \Phi_n \ton \tilde \Phi_\eps
	 \quad\text{a.s.~in $\cX$}.
\end{equation}

Regarding the tilde notation, the components of 
$\tilde V^n,\tilde W^n,\tilde V_0^n$ bear the same symbolic 
representations as $V^n,W^n,V_0^n$, except with a tilde symbol 
overhead. For instance, $v^n_i$ would transform 
into $\tilde v^n_i$, and a similar rule applies to the other variables.  
The components of $\tilde V_\eps,\tilde W_\eps,\tilde V_{0,\eps}$ are denoted 
in the same way. For example, the limit of $\tilde v^n_i$ 
as $n\to \infty$ would be symbolized as $\tilde v_{i,\eps}$ (the 
parameter $\eps$ will be sent to zero in the next section).

For the sake of clarity and to facilitate future reference, the 
almost sure convergence \eqref{eq:def-tvar0-conv} can be recast into 
the following \textit{strong} convergences:
\begin{equation}\label{eq:strong-conv1}
	\begin{split}
		& \tilde v^n \ton \tilde v_\eps, 
		\,\, 
		\tilde w^n \ton \tilde w_\eps,
		\,\, 
		\tilde \gamma^n \ton \tilde \gamma_\eps
		\quad \text{a.s.~in $L^2_{t,\bx}$,}
		\\ &
		\tilde W^{v,n} \ton \tilde W^v_\eps, 
		\,\,
		\tilde W^{w,n} \ton \tilde W^w_\eps 
		\quad \text{a.s.~in $C_t\U_0$,}
		\\ & 
		\tilde V_0^n\ton \tilde V_{0,\eps}				
		\quad  \text{a.s.~in $L^2_\bx$, 
		see \eqref{eq:def-U-W-U0}}.
	\end{split}
\end{equation}
Moreover, the following \textit{weak} convergences hold: 
\begin{equation}\label{eq:weak-conv1}
	\begin{split}
		& \tilde v^n_i \weakn \tilde v_{i,\eps},
		\,\,
		\tilde v^n_e \weakn \tilde v_{e,\eps}
		\quad \text{a.s.~in $L^2_tH^1_\bx$},
		\\ & 
		\tilde \bu^n \weakn \tilde \bu_\eps 
		\quad \text{a.s.~in $L^2_t[H^1_\bx]^3$}, 
		\quad 
		\tilde p^n \weakn \tilde p_\eps
		\quad \text{a.s.~in $L^2_{t,\bx}$.}
	\end{split}
\end{equation}

Leveraging the equality of laws, the a priori 
estimates in Lemma \ref{lem:apriori-est}
and Corollary \ref{cor:Lq0-est} continue to hold 
for the new random variables 
$\tilde v^n_i$, $\tilde v^n_e$, 
$\tilde v^n$, 
$\tilde w^n$, 
$\tilde p^n$, 
$\bu^n$, defined on $\bigl(\tilde{\Omega},\tilde{\cF}, \tilde{P}\bigr)$. 
Besides, we have $\tilde \bu_0^n\equiv 0$ 
and $\tilde v^n=\tilde v^n_i-\tilde v^n_e$ a.e. 
In subsequent discussions, even when referencing estimates 
for variables with a tilde atop, we will consistently use the 
equation numbers originally associated 
with their non-tilde counterparts.

Moving forward, we will use the following stochastic 
basis associated with $\tilde \Phi_n$, $n\in \N$:
\begin{equation}\label{eq:stoch-basis-n}
	\begin{split}
		& \tilde \cS_n=\left(\tilde \Omega,\tilde \cF, 
		\Setb{\tilde \cF_t^n}_t,\tilde P,
		\tilde W^{v,n},\tilde W^{w,n}\right),
		\\ & 
		\tcFt^n =\sigma\left(\sigma \bigl(\tilde \Phi_n\big |_{[0,t]}\bigr)
		\bigcup \bigl\{N \in \tilde \cF:\tilde P(N)
		=0\bigr\}\right), \qquad t\in [0,T],
	\end{split}
\end{equation} 
where $\Setb{\tcFt^n}_t$ is the smallest filtration making the 
processes \eqref{eq:def-tvar0} adapted.  Note that, by the equality of laws 
and \cite{DaPrato:2014aa}, $\tilde W^{v,n}$ and $\tilde W^{w,n}$ are 
cylindrical Brownian motions. We also need the $n$-truncated sums 
\begin{equation*}
	\tilde W^{v,(n)} = \sum_{k=1}^n\tilde W_k^{v,n} \psi_k,
	\qquad
	\tilde W^{w,(n)} = \sum_{k=1}^n\tilde W_k^{w,n} \psi_k,
\end{equation*}
which a.s.~converge respectively to $\tilde W^v_\eps$, $\tilde W^w_\eps$ 
in $C([0,T];\U_0)$, cf.~\eqref{eq:strong-conv1}.

We still need to show that the tilde variables, $\tilde V^n, \tilde W^n, 
\tilde V_0^n$ satisfy the Faedo-Galerkin equations on 
the newly defined stochastic basis \eqref{eq:stoch-basis-n}. 
This is achieved by leveraging the equivalence of laws. 
Multiple strategies can be employed to flesh out the details, as 
seen in various sources like
\cite{Bensoussan:1995aa,Brzezniak:2011aa,Ondrejat:2010aa}.
In particular, \cite{Bendahmane:2018aa} utilized the method presented in \cite{Bensoussan:1995aa} to tackle the stochastic bidomain model. 
The same argument can be adopted here for 
\eqref{eq:approx-eqn-integrated}. However, we will not delve into the 
nitty-gritty of the proof; instead, we direct the interested reader 
to \cite{Bendahmane:2018aa} or any of the other cited 
references for a more in-depth exploration. 

As a result, we can, for the subsequent discussions, make 
the assumption that the tilde variables $\tilde V^n,\tilde W^n,\tilde V_0^n$ 
conform to the Faedo-Galerkin equations. This, symbolically, 
means that equations \eqref{eq:approx-eqn-integrated} 
are upheld when a tilde is placed over each 
variable in the equations. Moving forward, whenever we refer 
to the Faedo-Galerkin equations in the context of the tilde variables, 
we are indeed referencing the equations \eqref{eq:approx-eqn-integrated} 
pertaining to the original variables. This approach avoids 
the unnecessary repetition of equations, thus conserving space.

\section{Solution to non-degenerate model}
\label{sec:conv-nondegen}

This section aims to prove that the limiting functions 
in \eqref{eq:strong-conv1} and \eqref{eq:weak-conv1} form 
a weak martingale solution for the non-degenerate 
system \eqref{eq:nondegen-S1}. This solution also incorporates 
the boundary conditions given by \eqref{S-bc} and the auxiliary initial 
data for the variables $v_{i,\eps}$, $v_{e,\eps}$, 
$\bu_\eps$, and $p_\eps$, which were discussed earlier.

In addition to \eqref{eq:stoch-basis-n}, we need 
a stochastic basis for the a.s.~limit $\tilde \Phi_\eps$ 
in \eqref{eq:def-tvar0}, \eqref{eq:def-tvar0-conv}: 
\begin{equation}\label{eq:stoch-basis-eps}
	\tilde \cS_\eps=\left(\tilde \Omega,\tilde \cF, 
	\Setb{\tilde \cF_t^\eps}_t,\tilde P,
	\tilde W^v_\eps,\tilde W^w_\eps\right),
\end{equation}
where $\Setb{\tilde \cF_t^\eps}_t$ is the 
smallest filtration that makes all the 
relevant processes in $\tilde \Phi_\eps$ adapted. 
Then $\tilde W^v_\eps$, $\tilde W^w_\eps$ 
are (independent) cylindrical Wiener processes 
with respect to sequences $\Setb{\tilde W_{\eps,k}^v}_{k\in \N}$, 
$\Setb{\tilde W_{\eps,k}^w}_{k\in \N}$ of mutually 
independent real-valued Wiener processes adapted 
to the filtration $\Setb{\tilde \cF_t^\eps}_{t}$, such that 
$$
\tilde W^v_\eps=\sum_{k\ge 1}\tilde W_{k,\eps}^v \psi_k, 
\quad 
\tilde W^v_\eps=\sum_{k\ge 1}\tilde W_{\eps,k}^w \psi_k.
$$ 
See \cite{Bendahmane:2018aa} for further details and references. 

At this point, we have established 
the almost sure convergences in \eqref{eq:strong-conv1} 
and \eqref{eq:weak-conv1} for the tilde variables. 
In addition, the tilde variables adhere to the statistical estimates 
provided by Lemma \ref{lem:apriori-est} and Corollary \ref{cor:Lq0-est}.
Considering the implications of the Banach-Alaoglu theorem 
along with Vitali's convergence theorem, 
after possibly thinning the sequence, we are justified in 
assuming that the following convergences occur as $n\to \infty$:
\begin{equation}\label{eq:tilde-weakconv1}
	\begin{split}
		& \tilde v^n \to \tilde v_\eps, 
		\,\, 
		\tilde w^n \to \tilde w_\eps,
		\,\,
		\tilde \gamma^n \to \tilde \gamma_\eps
		\quad \text{a.e.~and in $L^2\bigl(\tilde \Omega;
		L^2((0,T)\times \cO)\bigr)$},
		\\ &
		\tilde v^n_i \weak \tilde v_{i,\eps},
		\,\,
		\tilde v^n_e \weak \tilde v_{e,\eps}
		\quad \text{in $L^2\bigl(\tilde \Omega;
		L^2(0,T;H^1(\cO))\bigr)$},
		\\ &
		\tilde \bu^n \weak \tilde \bu_\eps
		\quad \text{in $L^2\bigl(\Omega;L^2(0,T;
		[H^1(\cO)]^3)\bigr)$},
		\\ &
		\tilde p^n \weak \tilde p_\eps \quad
		\text{in $L^2\bigl(\tilde\Omega;
		L^2((0,T)\times \cO)\bigr)$},
		\\ &
		\tilde W^{v,n} \to \tilde W^v_\eps,
		\,\, \tilde W^{w,n} \to \tilde W^w_\eps
		\quad  \text{in $L^2\bigl(\tilde \Omega;
		C([0,T]; \U_0)\bigr)$},
		\\ &
		\tilde v^n_0 \to \tilde v_{0,\eps},
		\,\,
		\tilde v^n_{j,0} \to \tilde v_{j,0,\eps},\,\, j=i,e,
		\quad  \text{in $L^2\bigl(\tilde \Omega;L^2(\cO)\bigr)$},
		\\ &
		\tilde w^n_0 \to \tilde w_{0,\eps},
		\,\,
		\tilde \gamma^{n}_{0} \to \tilde \gamma_{0,\eps}
		\quad  \text{in $L^2\bigl(\tilde \Omega;L^2(\cO)\bigr)$},
		\\ &
		\tilde p^{n}_{0} \to \tilde p_{0,\eps}
		\quad \text{in $L^2\bigl(\tilde \Omega;L^2(\cO)\bigr)$},
	\end{split}
\end{equation}
where the limit functions are bounded
in the spaces suggested by the uniform 
estimates supplied by Lemma \ref{lem:apriori-est} 
and Corollary \ref{cor:Lq0-est}. Besides, 
$$
\tilde v_\eps = \tilde v_{i,\eps}- \tilde v_{e,\eps}
\quad 
\text{a.e.~in $\tilde \Omega\times [0,T]\times \cO$}.
$$

With the convergences established 
above \eqref{eq:tilde-weakconv1}, the 
concluding step involves taking the limit in the 
Faedo-Galerkin equations \eqref{eq:approx-eqn-integrated} as $n\to \infty$ 
(in this section, $\eps$ is kept fixed). The details of this step will 
be provided in the subsequent lemma.

\begin{lem}\label{eq:limit-eqs-tilde}
The following equations hold for a.e.~$t\in [0,T]$, 
$\tilde P$-almost surely: First,
\begin{equation}\label{eq:weakform-tilde-Stokes}
	\begin{split}
		& \eps\int_{\cO}\tilde\bu_{\eps}(t)\cdot \bv \dx
		+\int_0^t\int_{\cO} \nabla\tilde\bu_{\eps}
		\bsigma(\bx,\tilde\gamma_{\eps}):
		\nabla \bv-\tilde p_{\eps}\Grad\cdot \bv\dx \ds
		\\ & \qquad
		+\int_0^t\int_{\partial \cO}
		\alpha\tilde\bu_{\eps}\cdot \bv\, dS(\bx)\ds
		=\int_0^t\int_{\cO} \ffte \cdot \bv \dx\ds, 
		\\ & 
		\eps\int_{\cO}\tilde p_{\eps}(t) \vphi_p\dx
		+\int_0^t\int_{\cO} (\nabla \cdot \tilde \bu_{\eps})
		\vphi_p\dx\ds=\eps\int_{\cO}\tilde p_{0,\eps} \vphi_p\dx,
	\end{split}
\end{equation}
for all $\bv\in [H^1(\cO)]^3$ and $\vphi_p \in H^1(\cO)$.
Second, for all $\vphi_i\in H^1(\cO)$ and $\vphi_e\in H^1(\cO)$,
\begin{equation}\label{eq:weakform-tilde-bidomain}
	\begin{split}
		& \int_{\cO}\tilde v_{\eps}(t)\vphi_i\dx
		+\eps\int_{\cO}\tilde v_{i,{\eps}}(t) \vphi_i\dx
		+\int_0^t\int_{\cO} \bM_i(\bx,\Grad\tilde\bu_{\eps})
		\nabla \tilde v_{i,\eps}\cdot\nabla\vphi_i\dx\ds
		\\ & \qquad 
		+\int_0^t\int_\cO
		\Ion(\tilde v_{\eps}, \tilde w_{\eps})\vphi_i
		=\int_{\cO}\tilde v_{0,\eps}\vphi_i\dx
		+\eps\int_{\cO}\tilde v_{i,0,\eps}\vphi_i\dx
		\\ & \qquad \qquad
		+ \int_0^t\int_{\cO}I_{app}\vphi_i
		+\int_0^t \int_\cO\beta_v(\tilde v_\eps)
		\vphi_i \dx \,d\tilde W^{v}(s),
		\\ &
		\int_{\cO}\tilde v_{\eps}(t)\vphi_e
		-\eps \int_{\cO}\tilde v_{e,\eps}(t)\vphi_e
		-\int_0^t\int_{\cO} \bM_e(\bx,\Grad\tilde\bu_{\eps})
		\nabla \tilde v_{e,\eps} \cdot \nabla\vphi_e\dx\ds
		\\ & \qquad 
		+\int_0^t\int_\cO\Ion(\tilde v_{\eps},\tilde w_{\eps})
		\vphi_e\dx\dt
		=\int_{\cO}\tilde v_{0,\eps}\vphi_e\dx
		-\eps \int_{\cO}\tilde v_{e,0,\eps}\vphi_e\dx
		\\ & \qquad \qquad 
		-\int_0^t\int_{\cO}I_{app}\vphi_e\dx\ds
		+\int_0^t\int_\cO\beta_v(\tilde v_\eps)
		\vphi_e \dx\,d\tilde W^{v}(s).
	\end{split}
\end{equation}
Finally, for all $\vphi_w,\vphi_\gamma \in H^1(\cO)$,
\begin{equation}\label{eq:weakform-tilde-ODE-SDE}
	\begin{split}
		& \int_{\cO}\tilde {w}_{{\eps}}(t)\vphi_w\dx
		= \int_{\cO}\tilde {w}_{0,\eps}\vphi_w\dx
		\\ & \qquad 
		+\int_0^t\int_{\cO}H(\tilde v_{\eps},\tilde w_{\eps})\vphi_w\dx\ds
		+\int_0^t\int_\cO\beta_w(\tilde v_\eps)\vphi_w 
		\dx \,d\tilde W^{w}(s),
		\\ & 
		\int_{\cO} \tilde\gamma_{\eps}(t)\vphi_\gamma\dx 
		=\int_{\cO}\tilde\gamma_{0,\eps}\vphi_\gamma\dx
		+\int_0^t\int_{\cO} 
		G(\tilde \gamma_{\eps},\tilde w_{\eps})
		\vphi_\gamma \dx\ds,
	\end{split}
\end{equation}
The laws of $\tilde v_\eps(0)=\tilde v_{0,\eps}$, 
$\tilde w_\eps(0)=\tilde w_{0,\eps}$, 
$\tilde \gamma_\eps(0)=\tilde \gamma_{0,\eps}$
are $\mu_{v_{0}}$, $\mu_{w_{0}}$, 
$\mu_{\gamma_{0}}$, respectively.
\end{lem}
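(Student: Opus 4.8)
The plan is to pass to the limit $n\to\infty$ (with $\eps>0$ fixed) in the integrated Faedo--Galerkin identities \eqref{eq:approx-eqn-integrated}, which the tilde variables satisfy on the stochastic basis $\tilde\cS_n$. I would pair each equation with a fixed basis function $e_l\in\{\boldsymbol{\psi}_l,\mu_l,\zeta_l\}$ and a smooth scalar weight in $t$, take $\tilde{\E}[\,\cdot\,]$, and argue term by term. The projections $\Pi_n$ disappear in the usual way: $\Pi_n$ is self-adjoint on $L^2(\cO)$, converges strongly to the identity on $H^1(\cO)$, and fixes $e_l$ once $n\ge l$, so for large $n$ the projection moves onto the (fixed) test function; density of the Galerkin bases in $[H^1(\cO)]^3$ resp.\ $H^1(\cO)$ then extends the identities to all $\bv$ and $\vphi_i,\vphi_e,\vphi_w,\vphi_\gamma$. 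Every linear term passes using the weak $L^2(\tilde\Omega;\cdot)$ convergences in \eqref{eq:tilde-weakconv1}. The reaction terms $\Ion(\tilde v^n,\tilde w^n)$, $H(\tilde v^n,\tilde w^n)$, $G(\tilde\gamma^n,\tilde w^n)$ converge by the $\tilde P\otimes dt\otimes d\bx$-a.e.\ convergence of $(\tilde v^n,\tilde w^n,\tilde\gamma^n)$ in \eqref{eq:tilde-weakconv1}, the growth bounds in {\bf (A.3)}--{\bf (A.4)}, and the $q_0$-th moment estimates of Corollary \ref{cor:Lq0-est} ($q_0>9/2$), via Vitali's theorem. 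The It\^o terms pass using the Lipschitz bound \eqref{eq:noise-cond2} on $\beta_v,\beta_w$ together with $\tilde W^{v,(n)}\to\tilde W^v_\eps$, $\tilde W^{w,(n)}\to\tilde W^w_\eps$ in $C([0,T];\U_0)$ and the standard convergence lemma for stochastic integrals used, e.g., in \cite{Bendahmane:2018aa}. The statement on the initial laws follows from the a.s.\ $L^2(\cO)$-convergence of $\tilde v^n_0,\tilde w^n_0,\tilde\gamma^n_0$ together with the equality of the laws of $\tilde\Phi_n$ and $\Phi_n$.

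The delicate terms are the diffusive fluxes $\nabla\tilde\bu^n\,\bsigma(\bx,\tilde\gamma^n)$ in the elasticity equation and $\bM_j(\bx,\Grad\tilde\bu^n)\Grad\tilde v^n_j$ ($j=i,e$) in the bidomain equations, since both depend \emph{nonlinearly} on $\Grad\tilde\bu^n$, for which only weak convergence in $L^2(\tilde\Omega;L^2(0,T;[H^1(\cO)]^3))$ is available. For the elasticity flux this is harmless: the Stokes operator is linear in $\bu$, and $\tilde\gamma^n\to\tilde\gamma_\eps$ a.e.\ with $\bsigma\in C^1(\bar\cO\times\R)$ and {\bf (A.1)} yield $\nabla\bv\,\bsigma(\bx,\tilde\gamma^n)\to\nabla\bv\,\bsigma(\bx,\tilde\gamma_\eps)$ strongly for the fixed $\bv$, which paired with $\Grad\tilde\bu^n$ identifies the first identity in \eqref{eq:weakform-tilde-Stokes} (the source satisfies $\ff(t,\bx,\tilde\gamma^n)\rightharpoonup\ffte$ weakly, by \eqref{def:ff}, the $L^2$-bound on $\bg$ and the gradient bound on $\tilde\gamma^n$ from Lemma \ref{lem:apriori-est}; note only $\tilde\gamma^n\to\tilde\gamma_\eps$ strongly and $\Grad\tilde\gamma^n\rightharpoonup\Grad\tilde\gamma_\eps$ weakly are used). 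The bidomain diffusion terms, however, require \emph{a.e.}\ convergence of $\Grad\tilde\bu^n$, and establishing that is, I expect, the main obstacle.

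To get $\Grad\tilde\bu^n\to\Grad\tilde\bu_\eps$ strongly in $L^2(\tilde\Omega\times(0,T)\times\cO)$ I would proceed as follows. Since $\eps>0$ is fixed, $\eps\partial_t\tilde\bu^n$ is bounded in $L^2(\tilde\Omega;L^2(0,T;(H^1(\cO))^*))$; with the $L^2_tH^1_\bx$-bound and (via the $q_0$-moments of Corollary \ref{cor:Lq0-est}) Vitali, this gives $\tilde\bu^n\to\tilde\bu_\eps$ strongly in $L^2(\tilde\Omega;L^2((0,T)\times\cO))$ and $\tilde\bu^n(t)\rightharpoonup\tilde\bu_\eps(t)$, $\tilde p^n(t)\rightharpoonup\tilde p_\eps(t)$ weakly in $L^2(\tilde\Omega\times\cO)$ for a.e.\ $t$. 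With $a(\cdot,\cdot;\gamma)$ the coercive form \eqref{bilinear-forma}, coercivity and {\bf (A.1)} give
\[ \tfrac1c\,\tilde{\E}\int_0^T\norm{\Grad(\tilde\bu^n-\tilde\bu_\eps)}_{L^2(\cO)}^2\ds\le\tilde{\E}\int_0^T a\bigl(\tilde\bu^n-\tilde\bu_\eps,\tilde\bu^n-\tilde\bu_\eps;\tilde\gamma^n\bigr)\ds. \]
Expanding the right-hand side, the non-diagonal contributions combine to $-\tilde{\E}\int_0^T a(\tilde\bu_\eps,\tilde\bu_\eps;\tilde\gamma_\eps)$ (by weak convergence of $\Grad\tilde\bu^n$ against the strongly convergent coefficient $\bsigma(\bx,\tilde\gamma^n)$, plus trace-continuity for the boundary part), while the diagonal term is read off from the integrated energy identity \eqref{est1-mechanic} with $\tilde\bu^n(0)\equiv0$:
\[ \tilde{\E}\int_0^T a(\tilde\bu^n,\tilde\bu^n;\tilde\gamma^n)=\tilde{\E}\int_0^T\!\!\int_\cO\ff(t,\bx,\tilde\gamma^n)\cdot\tilde\bu^n-\tfrac\eps2\tilde{\E}\bigl(\norm{\tilde\bu^n(T)}_{L^2}^2+\norm{\tilde p^n(T)}_{L^2}^2-\norm{\tilde p^n_0}_{L^2}^2\bigr). \]
Its $\limsup$ is at most $\tilde{\E}\int_0^T\!\int_\cO\ffte\cdot\tilde\bu_\eps-\tfrac\eps2\tilde{\E}(\norm{\tilde\bu_\eps(T)}_{L^2}^2+\norm{\tilde p_\eps(T)}_{L^2}^2-\norm{\tilde p_{0,\eps}}_{L^2}^2)$, using strong $L^2_{t,x}$ convergence of $\tilde\bu^n$ against the weak limit $\ffte$, weak lower semicontinuity of the $L^2$-norm for the $\tilde\bu^n(T)$ and $\tilde p^n(T)$ terms, and $\tilde p^n_0\to\tilde p_{0,\eps}$; and by running the energy identity at the limit level (i.e., testing the already-identified equation \eqref{eq:weakform-tilde-Stokes} against $\tilde\bu_\eps$ and $\tilde p_\eps$), this bound equals $\tilde{\E}\int_0^T a(\tilde\bu_\eps,\tilde\bu_\eps;\tilde\gamma_\eps)$. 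Hence the right-hand side of the displayed inequality has nonpositive $\limsup$, forcing $\tilde{\E}\int_0^T\norm{\Grad(\tilde\bu^n-\tilde\bu_\eps)}_{L^2(\cO)}^2\to0$; along a subsequence $\Grad\tilde\bu^n\to\Grad\tilde\bu_\eps$ a.e. Continuity and boundedness ({\bf (A.2)}) of $\bM_j(\bx,\cdot)$ then give $\bM_j(\bx,\Grad\tilde\bu^n)\to\bM_j(\bx,\Grad\tilde\bu_\eps)$ a.e.\ and, by dominated convergence, $\bM_j(\bx,\Grad\tilde\bu^n)^{T}\Grad\vphi_j\to\bM_j(\bx,\Grad\tilde\bu_\eps)^{T}\Grad\vphi_j$ strongly; pairing this with the weakly convergent $\Grad\tilde v^n_j$ passes to the limit in \eqref{eq:weakform-tilde-bidomain}, and \eqref{eq:weakform-tilde-ODE-SDE} is already covered by the first paragraph. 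Collecting everything and invoking density gives the lemma. The hard point, as flagged, is this energy/Minty-type bookkeeping with a \emph{simultaneously} varying coefficient $\bsigma(\bx,\tilde\gamma^n)$, which is why I would identify the limit Stokes equation first (using only weak convergence of $\Grad\tilde\bu^n$) and feed it back only afterwards to close the estimate.
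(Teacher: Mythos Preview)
Your overall plan matches the paper's: pass to the limit term by term against fixed test functions, move $\Pi_n$ onto the test side, and flag the nonlinear conductivities $\bM_j(\bx,\Grad\tilde\bu^n)$ as the crux, to be resolved by upgrading $\Grad\tilde\bu^n\rightharpoonup\Grad\tilde\bu_\eps$ to strong $L^2_{\tilde\omega,t,\bx}$ convergence \emph{after} first identifying the limit Stokes equation \eqref{eq:weakform-tilde-Stokes} via weak convergence only. The paper follows this template and even names Minty--Browder as one of two possible routes for the strong-convergence step; it then opts for the other.

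Where you diverge is in the mechanics of that step. The paper subtracts the Galerkin and the (already identified) limit Stokes systems and tests the difference with $(\tilde\bu^n-\tilde\bu_\eps,\,\tilde p^n-\tilde p_\eps)$; the pressure/divergence cross terms cancel, the contributions at $t=T$ carry a good sign and are simply dropped, and one obtains
\begin{align*}
\tfrac1c\,\tilde\E\int_0^T\norm{\tilde\bu^n-\tilde\bu_\eps}^2_{[H^1]^3}\dt
&\le \eps\,\tilde\E\norm{\tilde p^n_0-\tilde p_{0,\eps}}^2_{L^2}
-\tilde\E\int_0^T\!\!\int_\cO\nabla\tilde\bu_\eps\bigl[\bsigma(\bx,\tilde\gamma^n)-\bsigma(\bx,\tilde\gamma_\eps)\bigr]:\nabla(\tilde\bu^n-\tilde\bu_\eps)\\
&\quad-\tilde\E\int_0^T\!\!\int_\cO\bigl[\ff(\bx,\tilde\gamma^n)-\ff(\bx,\tilde\gamma_\eps)\bigr]\cdot(\tilde\bu^n-\tilde\bu_\eps).
\end{align*}
The first right-hand term vanishes by \eqref{eq:tilde-weakconv1}; the second is a (strong $L^2$)$\times$(weak $L^2$) product tending to zero; the third, after integration by parts via \eqref{def:ff}, reduces to $[\bsigma(\bx,\tilde\gamma^n)-\bsigma(\bx,\tilde\gamma_\eps)]:\nabla(\tilde\bu^n-\tilde\bu_\eps)$, again strong$\times$weak.

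Your energy-limsup variant is legitimate in spirit but has two gaps as written. First, the Aubin--Lions step ``$\tilde\bu^n\to\tilde\bu_\eps$ strongly in $L^2(\tilde\Omega;L^2_{t,\bx})$'' does not follow from $L^2_{\tilde\omega}$ bounds on $\tilde\bu^n$ and $\eps\partial_t\tilde\bu^n$: there is no compactness in $\tilde\omega$, and the Skorokhod path space chosen for $\tilde\bu^n$ is only $\bigl(L^2_t[H^1_\bx]^3\bigr)_w$, so a.s.\ strong $L^2_{t,\bx}$ convergence is not supplied by the construction (nor can it be argued pathwise without $n$-uniform a.s.\ bounds on $\eps\partial_t\tilde\bu^n$, which in turn would need a.s.\ control of $\nabla\tilde\gamma^n$ that the setup does not provide). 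You invoked this convergence solely to pair the weakly convergent $\ff(\bx,\tilde\gamma^n)$ with $\tilde\bu^n$; the fix---which the paper uses---is integration by parts, so that only the \emph{strongly} convergent $\bsigma(\bx,\tilde\gamma^n)$ is paired with $\nabla\tilde\bu^n$. Second, your limsup comparison needs $\tilde\bu^n(T)\rightharpoonup\tilde\bu_\eps(T)$ and $\tilde p^n(T)\rightharpoonup\tilde p_\eps(T)$ in $L^2(\tilde\Omega\times\cO)$ at the specific endpoint $T$, which is not immediate from a.e.-$t$ weak convergence; the paper avoids this by discarding those nonpositive terms outright. With both fixes your argument collapses to a rearrangement of the paper's.
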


\begin{rem}
Note that the (regularized) Stokes-type 
equations \eqref{eq:weakform-tilde-Stokes} hold a.e.~in 
the time variable  $t$. On the other hand, 
Definition \ref{def:martingale-sol} 
stipulates them to be valid only in the sense of 
distributions on $(0,T)$. Nevertheless, by weakly 
differentiating the a.e.~formulation 
\eqref{eq:weakform-tilde-Stokes} with 
respect to time, we obtain
\begin{align*}
	& \eps\int_{\cO} \partial_t \tilde\bu_{\eps}\cdot \bv \dx
	+\int_{\cO} \nabla\tilde\bu_{\eps}
	\bsigma(\bx,\tilde\gamma_{\eps}):
	\nabla \bv-\tilde p_{\eps}\Grad\cdot \bv\dx
	\\ & \qquad
	+\int_{\partial \cO}
	\alpha\tilde\bu_{\eps}\cdot \bv\, dS(\bx)
	=\int_{\cO} \ffte \cdot \bv \dx 
	\quad \text{in $\Dp(0,T)$}, 
	\\ & 
	\eps\int_{\cO} \partial_t \tilde p_{\eps} \vphi_p\dx
	+\int_{\cO} (\nabla \cdot \tilde \bu_{\eps})
	\vphi_p\dx=0
	\quad \text{in $\Dp(0,T)$}.
\end{align*}
\end{rem}

\begin{proof}
The proof follows \cite{Bendahmane:2018aa}, while focusing on 
the specific modifications introduced by our extended model. 
These modifications primarily pertain to the analysis of 
the Stokes-like system \eqref{eq:weakform-tilde-Stokes}, the necessity 
of taking the limit in the nonlinear conductivities within the 
bidomain component \eqref{eq:weakform-tilde-bidomain}, 
and finally the treatment of the ODE for $\gamma_\eps$ 
in \eqref{eq:weakform-tilde-ODE-SDE}.

We start by establishing \eqref{eq:weakform-tilde-Stokes}.
Fix $\bv\in [H^1(\cO)]^3$ and $\vphi_p\in H^1(\cO)$, and note 
that the Faedo--Galerkin equations 
\eqref{eq:approx-eqn-integrated} imply that
\begin{align}
	& \eps \int_{\cO}\tilde\bu^n(t) \cdot \Pi_n \bv\dx
	+\int_0^t\int_{\cO} \nabla\tilde\bu^n
	\bsigma(\bx,\tilde\gamma^n):
	\nabla (\Pi_n\bv) \dx\ds
	-\tilde p^n\Grad\cdot (\Pi_n\bv)\dx\ds
	\notag \\  & \qquad \qquad
	+\int_0^t\int_{\partial \cO}\alpha 
	\tilde\bu^n\cdot (\Pi_n \bv)\, dS(\bx)\ds
	=\int_0^t\int_{\cO} \fftn\cdot \Pi_n\bv \dx \ds,
	\label{eq:weakform-tilde-tmp1-meca1} 
	\\ & \eps\int_{\cO}\tilde p^n(t) \vphi_p\dx
	+\int_0^t\int_{\cO} (\nabla\cdot\tilde\bu^n)\vphi_p\dx\ds
	=\eps\int_{\cO}\tilde p^n_{0}\vphi_p\dx.
	\notag
\end{align}

To state the desired equations \eqref{eq:weakform-tilde-Stokes} 
in a more concise manner, we can use the notations 
$I_{\bu}^{\bv}(\tilde \omega,t)=0$ 
and $I_p^{\vphi_p}(\tilde \omega,t)=0$, 
where $I_{\bu}^{\bv}$ and $I_p^{\vphi_p}$ are 
appropriate (integrable) functions on $\tilde \Omega \times [0,T]$ 
defined by \eqref{eq:weakform-tilde-Stokes}. 
Let $Z\subset \tilde \Omega\times [0,T]$ be a 
measurable set, and denote by 
\begin{equation*}
	\En_Z(\tilde\omega,t)\in 
	L^\infty\bigl(\tilde \Omega\times [0,T];
	\tilde P\times dt \bigr)
\end{equation*}
the characteristic function of $Z$. We will show that 
\begin{equation*}
	\tilde \E \left[\int_0^T \En_Z I_{\bu}^{\bv}\dt\right] =0,
	\quad
	\tilde \E \left[\int_0^T \En_Z I_p^{\vphi_p}\dt\right] =0,
\end{equation*}
for any measurable set $Z\subset \tilde \Omega\times [0,T]$, 
and for a countable collections of test functions 
$\bv\in \Setb{\bv^{(\ell)}}_{\ell\in \N}\subset [H^1_\bx]^3$, 
$\vphi_p\in \Setb{\vphi_p^{(\ell)}}_{\ell\in \N}
\subset H^1_\bx$. This is enough to conclude that $I_{\bu}^{\bv}=0$ 
and $I_p^{\vphi_p}=0$ for ($\tilde P\times dt$) a.e.~$(\tilde \omega,t)$, 
for all $H^1$ test functions $\bv$, $\vphi_p$, i.e., 
\eqref{eq:weakform-tilde-Stokes} holds.

To demonstrate this, we will employ the following approach: first, we 
multiply the Faedo-Galerkin equations \eqref{eq:weakform-tilde-tmp1-meca1} 
by $\En_Z$ and integrate them over $\tilde \omega$ and $t$. 
Subsequently, we will take the limit as $n\to \infty$.  
A similar line of reasoning will be applied to the 
components \eqref{eq:weakform-tilde-bidomain} 
and \eqref{eq:weakform-tilde-ODE-SDE} of the system.

By the weak $L^2_{\tilde \omega,t,x}$ convergence 
of $\tilde \bu^n$ to $\tilde \bu_\eps$, 
see \eqref{eq:tilde-weakconv1} and the strong 
$L^2$ convergence of $\Pi_n \bv$ to $\bv$. we obtain
$$
\tilde \E \left[\int_0^T \En_Z 
\left(\eps\int_{\cO}\tilde \bu^n(t) 
\cdot \Pi_n \bv \dx\right) \dt \right]
\ton \tilde \E \left[\int_0^T \En_Z
\left(\eps \int_{\cO}  \tilde \bu_\eps(t)
\cdot\bv \dx\right) \dt \right],
$$
Moreover, by \eqref{eq:tilde-weakconv1},
\begin{align*}
	&\tilde \E \left[\int_0^T \En_Z
	\left(\, \int_{\cO} \tilde p^n(t) 
	\vphi_p \dx\right) \dt \right]
	\ton \E \left[\int_0^T \En_Z 
	\left(\, \int_{\cO} \tilde p_\eps 
	\vphi_p \dx\right) \dt \right].
\end{align*}
Similarly, we can pass to the limit in the 
terms involving $-\tilde p^n\Grad\cdot (\Pi_n\bv)$, 
and  $(\nabla\cdot\tilde\bu^n)\vphi_p$. 
For the term associated with $\alpha \tilde{\bu}^n \cdot (\Pi_n \bv)$, 
the weak convergence of $\alpha \bu^n$ on the boundary is 
attributed to the continuity of the trace map from 
$[H^1(\cO)]^3$ to $[L^2(\partial \cO)]^3$. 
Moreover, we will soon establish the strong convergence 
$\tilde{\bu}^n \to \tilde{\bu}_\varepsilon$ in 
$L^2_{\tilde{\omega},t}[H^1_{\bx}]^3$ as $n \to \infty$. 
By using \eqref{eq:tilde-weakconv1} once more, we 
can easily handle the initial data terms 
in \eqref{eq:weakform-tilde-tmp1-meca1}.

Recall the well-known fact 
that if $a_n\to a$ a.e.~with $\norm{a_n}_{L^\infty}\lesssim 1$ 
and $b_n\weak b$ in $L^2$, then 
$a_nb_n \weak a b$ in $L^2$. By the almost everywhere 
convergence $\tilde \gamma^n\to \tilde \gamma_\eps$, 
see \eqref{eq:tilde-weakconv1}, $\bsigma(\bx,\tilde \gamma^n)
\to \bsigma(\bx,\tilde \gamma_\eps)$ a.e. Besides,
the $L^\infty$ bound on the tensor $\bsigma$, which follows from 
the assumption {\bf (A.1)}, implies that
$\norm{\bsigma(\cdot,\tilde \gamma^n)}_{L^\infty_{\tilde \omega,t,\bx}}
\lesssim 1$. According to \eqref{eq:tilde-weakconv1}, 
$\Grad \tilde \bu^n\to\Grad \tilde \bu_\eps$ 
weakly in $L^2_{\tilde \omega,t,\bx}$. Thus, the product 
$\Grad \tilde \bu^n\bsigma(\bx,\tilde \gamma^n)$ 
converges weakly in $L^2_{\tilde \omega,t,\bx}$. 
Combining this and the strong $L^2$ convergence 
$\Grad (\Pi_n \bv)\to \Grad \bv$, we conclude that 
the weak-strong product $\Grad \tilde \bu^n
\bsigma(\bx,\tilde \gamma^n):\Grad (\Pi_n \bv)$ 
converges weakly in $L^1_{\tilde \omega,t,\bx}$, and therefore
\begin{align*}
	&\tilde \E \left[\int_0^T \En_Z
	\left(\, \int_0^t \int_{\cO} \bsigma(\bx,\tilde \gamma^n) 
	\Grad \tilde \bu^n:\Grad (\Pi_n \bv) \dx\ds \right)\dt \right]
	\\ & \qquad \ton 
	\tilde \E \left[\int_0^T \En_Z
	\left(\, \int_0^t \int_{\cO} \bsigma(\bx,\tilde \gamma_\eps) 
	\Grad \tilde \bu:\Grad \bv \dx\ds\right)\dt \right].
\end{align*}

In relation to the remaining term in the elasticity equation 
involving $\fftn\cdot \bv$, where 
$\fftn=\ff(t,\bx,\tilde \gamma^n)$, 
we can refer back to \eqref{def:ff} which, through 
\eqref{eq:tilde-weakconv1}, shows that $\fftn$ weakly 
converges to $\ffte=\ff(t,\bx,\tilde \gamma_\eps)$ 
in $L^2_{\tilde \omega,t,\bx}$. Simultaneously, we know 
that $\Pi_n\bv\to \bv$ strongly in $L^2$. Therefore,
\begin{align*}
	&\tilde \E \left[\int_0^T \En_Z 
	\left(\, \int_0^t \int_{\cO} \ff(t,\bx,\tilde \gamma^n)
	\cdot  \Pi_n\bv  \dx\ds \right) \dt \right]
	\\ & \qquad
	\ton \tilde \E \left[\int_0^T \En_Z 
	\left(\, \int_0^t \int_{\cO} \ff(t,\bx,\tilde \gamma_\eps)
	\cdot \bv \dx\ds \right)\dt \right].
\end{align*}

Let us now shift our attention to a new aspect of the analysis for 
the electrical (bidomain) component of the model. 
The focus will be on elucidating the process of taking the limit in 
the nonlinear conductivities, the remaining terms can be treated 
as in \cite{Bendahmane:2018aa}. This includes passing to the limit in 
the ODE \eqref{eq:approx-eqn-integrated} for the 
new variable $\tilde \gamma^n$. 
Notably, this step is simpler than handling the SDE for 
the variable $\tilde w^n$, which was already adeptly 
addressed in \cite{Bendahmane:2018aa}.

To illustrate the new difficulty, we consider the following 
terms derived from the Faedo--Galerkin equations 
\eqref{eq:approx-eqn-integrated}:
\begin{align*}
	J_j^n := \tilde \E \left[\int_0^T \En_Z \left(\, \int_0^t\int_{\cO}
	\bM_i(\bx,\nabla\tilde \bu^n)\Grad \tilde v^n_i
	\cdot \nabla (\Pi_n \vphi_i) \dx \ds \right)\dt\right],
	\quad j=i,e,
\end{align*}
where $\nabla (\Pi_n \vphi_j)\to \nabla \vphi_j$ strongly 
in $L^2$ and $\Grad \tilde v^n_j\to \Grad \tilde v_{j,\eps}$ 
weakly in $L^2_{\tilde \omega,t,\bx}$, 
see \eqref{eq:tilde-weakconv1}. 

To facilitate the computation of the limit of $J_j^n$, it is 
crucial to establish the strong convergence, rather than 
weak convergence, of $\nabla\tilde \bu^n$ in $L^2_{\tilde \omega,t,\bx}$. 
This strong convergence is essential as $\nabla\tilde \bu^n$ is 
a component of the nonlinear functions $\bM_j$. 
Although the weak convergence is provided 
by \eqref{eq:tilde-weakconv1}, to make progress, we must 
prove strong convergence.  To achieve this, we have 
two potential approaches. Firstly, we can employ the well-known 
Minty-Browder argument. Alternatively, we can leverage the 
structural characteristics of the purely 
mechanical equation along with the coercivity 
of the bilinear form $a(\cdot,\cdot)$---introduced 
in \eqref{bilinear-forma}---to derive the following estimate:
\begin{align*}
	& \dfrac{1}{c} \tilde \E \left[\norm{\tilde \bu^n
	-\tilde \bu_\eps}^2_{L^2(0,T;[H^1(\cO)]^3)}\right]
	\leq \tilde \E \left[\int_0^T a\bigl(\bu^n-\tilde\bu_\eps,
	\tilde \bu^n-\tilde \bu_\eps\bigr) \dt \right]
	\\ & \qquad 
	\leq 
	\eps \tilde \E\left[\norm{\tilde p^n(0)
	-\tilde p_\eps(0)}^2_{L^2((0,T)\times \cO)}\right]
	\\ & \qquad \qquad
	-\tilde \E\left[\int_0^T\int_{\cO} \nabla \tilde \bu_\eps
	\bigl[ \bsigma(\bx,\tilde \gamma^n)
	-\bsigma(\bx,\tilde \gamma_\eps)\bigr]:
	\nabla \left(\tilde \bu^n-\tilde \bu_\eps\right)\dx\dt\right]
	\\ & \qquad\qquad
	-\tilde \E\left[\int_0^T\int_{\cO} 
	\bigl[\ff(\bx,\tilde \gamma^n)
	-\ff(\bx,\tilde \gamma_\eps)\bigr]
	\cdot \left(\tilde \bu^n-\tilde \bu\right)\dx\dt\right],
\end{align*}
for some constant $c>0$ independent of $n$ (and $\eps$).  
We claim that the three terms on the right-hand side converge to zero 
as $n\to \infty$. The initial data term converges to zero as 
stated in \eqref{eq:tilde-weakconv1}. 
By recalling the definition of $\ff$ given in \eqref{def:ff}, we 
can employ integration by parts. This approach 
effectively brings us to the consideration of the following 
strong-weak product, disregarding the straightforward 
term that involves $\bg$:
$$
\bigl[\bsigma(\bx,\tilde \gamma^n)
-\bsigma(\bx,\tilde \gamma_\eps)\bigr]:
\left(\nabla\tilde \bu^n-\nabla \tilde \bu\right).
$$
This product term converges to zero---weakly in 
$L^1_{\tilde \omega,t,\bx}$---by the weak $L^2_{\tilde \omega,t,\bx}$
convergence of $\nabla \tilde u_\eps^n$ and the strong 
$L^2_{\tilde \omega,t,\bx}$ convergence of 
$\bsigma(\bx,\tilde \gamma^n)$. 
To establish the latter, we have utilized the fact that 
$\bsigma(\bx,\tilde \gamma^n) \to \bsigma(\bx,\tilde \gamma_\eps)$ 
almost everywhere. This, combined with the assumption 
$\sigma\in L^\infty$, establishes the strong convergence 
in $L^2_{\tilde \omega,t,\bx}$. Thus, as we let $n\to \infty$, the 
third term on the right-hand side converges to zero. 

For the second term, referring to \eqref{eq:tilde-weakconv1}, 
$\abs{\nabla \tilde \bu_\eps
\bigl[ \bsigma(\bx,\tilde \gamma^n)
-\bsigma(\bx,\tilde \gamma_\eps)\bigr]}^2$ converges to zero 
a.e.~in $\tilde \Omega \times [0,T]\times \cO$ as $n\to \infty$. 
Furthermore, the assumption ${\bf (A.1)}$ ensures that $\sigma$ is bounded, 
thereby implying that the above-mentioned term is capped 
by a constant times $\abs{\nabla \tilde \bu_\eps}^2\in 
L^1_{\tilde \omega,t,\bx}$. Consequently, by applying Lebesgue's dominated 
convergence theorem, the term $\nabla \tilde \bu_\eps
\bigl[ \bsigma(\bx,\tilde \gamma^n)
-\bsigma(\bx,\tilde \gamma_\eps)\bigr]\to 0$ 
strongly in $L^2_{\tilde \omega,t,\bx}$ as $n\to \infty$. 
Simultaneously, \eqref{eq:tilde-weakconv1} tells us that the 
gradient of $\tilde \bu^n-\tilde \bu_\eps$ 
tends to zero weakly in $L^2_{\tilde \omega,t,\bx}$.
Hence, the second term on the right-hand side 
of the above inequality converges to zero as $n\to \infty$. 

To sum up, $\tilde \bu^n\to \tilde \bu_\eps$ 
in $L^2_{\tilde\omega,t}[H^1_{\bx}]^3$ as $n\to \infty$. 
Considering the a.s.~convergences \eqref{eq:strong-conv1}, 
\eqref{eq:weak-conv1} supplied by the Skorokhod--Jakubowski procedure, 
we can also deduce that $\tilde \bu^n\to \tilde \bu_\eps$ 
a.s.~in $L^2_t[H^1_{\bx}]^3$. By possibly thinning the 
sequence, we may assume that $\nabla\tilde \bu^n\to \nabla \tilde \bu_\eps$ 
almost everywhere. Due to the $L^\infty$ bound on $\bM_i$---as per 
assumption {\bf (A.2)}---we are led to the conclusion that
$$
\bM_i(\bx,\nabla\tilde \bu^n)\Grad \tilde v^n_i
\ton \bM_i(\bx,\nabla \tilde \bu_\eps)\Grad \tilde v_{i,\eps} 
\quad 
\text{weakly in $L^2_{\tilde \omega,t,\bx}$}.
$$
This weak convergence employs once again the fact that 
products of functions, which exhibit bounded convergence 
a.e.~and weak convergence in $L^2$, 
converge weakly in $L^2$ to the product of the limits. 
Given that $\nabla (\Pi_n \vphi_j)\to \nabla \vphi_j$ strongly 
in $L^2$, we can infer that the sequence $\Set{J_j^n}_{n\in \N}$ 
will indeed converge to the correct limit as $n\to \infty$. This 
concludes the proof of the lemma. For additional particulars, we 
refer to \cite{Bendahmane:2018aa}.
\end{proof}

Recall that a stochastic process $z$ taking values 
in a Banach space $X$---defined on 
$\cS_\eps$, see \eqref{eq:stoch-basis-eps}---is termed 
\textit{predictable} if $z:\tilde \Omega \times [0,T] \to X$ 
is measurable with respect to the product $\sigma$-algebra 
$\cP_T \times \mathcal{B}([0, T])$, 
where $\cP_T$ is the predictable 
$\sigma$-algebra associated with $\Setb{\tilde \cF_t^\eps}$ 
(the $\sigma$-algebra generated by all 
left-continuous adapted processes). 
A predictable process is both adapted and jointly measurable. 
Although the converse implication does not hold, adapted 
processes with regular paths (such as continuous paths) 
are predictable. 

More generally, given an equivalence class 
$z \in L^1\bigl(\tilde \Omega;L^1(0,T; X)\bigr)$, 
we say that $z$ is predictable 
if there exists a predictable 
stochastic process $\hat{z}$ with values in $X$ 
such that, ($\tilde P \times dt$) almost everywhere, $\hat{z}=z$.  
Given the equations \eqref{eq:weakform-tilde-bidomain} 
and \eqref{eq:weakform-tilde-ODE-SDE}, as well as 
$\bM_{i,e}\in L^\infty$ and the higher moment estimates 
in Corollary \ref{cor:Lq0-est}, we can adapt the time-continuity proof 
in \cite{Bendahmane:2018aa}. This allows us to assert that there exist 
representatives (versions) of $\tilde v_\eps$, $\tilde w_\eps$, 
and $\tilde \gamma_\eps$ that belong a.s.~to 
$C([0,T];(H^1(\cO))^*)$. As a specific inference, both 
$\tilde v_\eps$ and $\tilde w_\eps$ are predictable in the space 
$(H^1(\cO))^*$.

\section{Solution to original model}\label{sec:conv-degen}

The analysis of tightness and compactness, 
in the context of the ``$n \to \infty$" limit, as 
elaborated in Lemma \ref{lem:tight},  
\eqref{eq:strong-conv1}, \eqref{eq:weak-conv1}, 
\eqref{eq:stoch-basis-eps}, \eqref{eq:tilde-weakconv1}, 
and Lemma \ref{eq:limit-eqs-tilde}, can be reiterated 
for the ensuing limit $\eps \to 0$, assuming that 
$\eps$ takes values in a sequence 
$\Set{\eps_k}_{k\in \N}$ that tends to zero as $k \to \infty$. 
Thus, by possibly thinning the sequence, we may assume that as $\eps \to 0$, 
\begin{equation}\label{eq:tilde-weakconv1-eps}
	\begin{split}
		& \tilde v_\eps \to \tilde v, 
		\,\, 
		\tilde w_\eps \to \tilde w,
		\,\, 
		\tilde \gamma_\eps \to \tilde \gamma
		\quad \text{a.e.~and in $L^2_{\tilde \omega}L^2_{t,\bx}$},
		\\ & 
		\tilde v_{i,\eps} \weak \tilde v_i, 
                 \,\,
		\tilde v^n_{e,\eps} \weak \tilde v_e 
		\quad 
		\text{in $L^2_tH^1_\bx$, a.s., 
		and in $L^2_{\tilde \omega}L^2_tH^1_\bx$},
		\\ & 
		\tilde \bu_\eps \weak \tilde \bu
		\quad \text{in $L^2_t[H^1_\bx]^3$, a.s.,
		and in $L^2_{\tilde \omega}
		L^2_t[H^1_\bx]^3$},
                 \\ & 
                 \tilde W^v_\eps \to \tilde W^v, 
                 \,\, \tilde W^w_\eps \to \tilde W^w
                 \quad  \text{in $C_t\U_0$, a.s., 
                 and in $L^2_{\tilde \omega}C_t\U_0$},
                 \\ & 
                 \tilde v_{0,\eps} \to \tilde v_0, 
                 \,\, 
                 \tilde w_{0,\eps} \to \tilde w_0,
                 \,\,
                 \tilde \gamma_{0,\eps} \to \tilde \gamma_0 
                 \quad  \text{in $L^2_\bx$, a.s.,
                 and in $L^2_{\tilde \omega}L^2_\bx$},
	\end{split}
\end{equation}
where the limit functions are confined to 
the spaces hinted at by the uniform estimates found in 
Lemma \ref{lem:apriori-est} and Corollary \ref{cor:Lq0-est}.  
Furthermore, $\tilde v = \tilde v_i- \tilde v_e$ 
a.e.~in $\tilde \Omega\times [0,T]\times \cO$. 
Considering \eqref{Gal:est1-mechanic-u-p} and the
equality of laws, we may assume that 
\begin{equation}\label{eq:tilde-weakconv2-u-eps}
	\eps \tilde \bu_\eps\weakeps 0 
	\quad  \text{in $L^2_{t,\bx}$, a.s.,
	and in $L^2_{\tilde\omega}L^2_{t,\bx}$}.
\end{equation}

Without loss of generality, we may assume 
that the probability space remains the same as 
previously defined, that is, $\bigl(\tilde{\Omega},\tilde{\cF}, \tilde{P}\bigr)$. 
Indeed, as stated in \cite[Theorem 2]{Jakubowski:1997aa}, it is 
possible to select the underlying probability space as 
$\bigl ([0,1],\mathcal{B}([0,1]),\operatorname{Leb}\bigr)$, 
where $\mathcal{B}([0,1])$ refers to the Borel 
subsets of $[0,1]$, and $\operatorname{Leb}$ 
represents the Lebesgue measure.

A new stochastic basis is required for the limiting functions:
\begin{equation*}
	\tilde \cS=\left(\tilde \Omega,\tilde \cF, 
	\Setb{\tilde \cF_t}_t,\tilde P,
	\tilde W^v,\tilde W^w\right),
\end{equation*}
where $\Setb{\tilde \cF_t}_t$ is the 
smallest filtration that makes all the 
relevant processes adapted. 
The limits $\tilde W^v$ and $\tilde W^w$ 
are (independent) cylindrical Wiener processes 
on $\tilde \cS$.

The laws of $\tilde v(0)=\tilde v_0$, 
$\tilde w(0)=\tilde w_0$, and 
$\tilde \gamma(0)=\tilde \gamma_0$
are $\mu_{v_{0}}$, $\mu_{w_{0}}$, and 
$\mu_{\gamma_{0}}$, respectively, see 
\eqref{eq:moment-est}. Regarding the 
artificial initial data, we have 
\begin{equation}\label{eq:artificial-init-conv-eps}
	\eps \tilde p_{0,\eps},\,
	\eps \tilde v_{i,0,\eps},\,
	\eps \tilde v_{i,0,\eps} \toeps 0
	\quad  
	\text{in $L^2\bigl(\tilde \Omega;L^2(\cO)\bigr)$}.
\end{equation}

In the course of deriving tightness and a.s.~representations, 
which culminate in \eqref{eq:tilde-weakconv1-eps}, we may 
introduce a new variable, namely $\eps p_\eps$. 
According to the a priori estimate 
\eqref{Gal:est1-mechanic-u-p}, $\eps p_\eps$ can be interpreted 
as a random variable with values in $\bigl(L^2_{t,\bx})_w$. 
The tightness of $\eps p_\eps$ comes from the same estimate. 
Thus, we can identify a new variable with the same law, symbolized by 
$\eps \tilde p_\eps$, and 
a limiting variable which, in light of \eqref{Gal:est1-mechanic-u-p}, 
can be presumed to be the zero function, such that
\begin{equation}\label{eq:tilde-weakconv2-p-eps}
	\eps \tilde p_\eps \weakeps 0 
	\quad  \text{in $L^2_{t,\bx}$, a.s.,
	and in $L^2_{\tilde\omega}L^2_{t,\bx}$}.
\end{equation}

In the subsequent analysis, we aim to establish the 
existence of a certain limit pressure, denoted by $\tilde p$, such 
that $\tilde p_\eps \to \tilde p$ a.s.~in $\Dp\bigl(0,T;(L^2_{\bx})_w\bigr)$, 
see the upcoming Proposition \ref{prop:pressure-conv}. 
Assuming the validity of this assertion and the convergences 
expressed by \eqref{eq:tilde-weakconv1-eps}, \eqref{eq:artificial-init-conv-eps} 
and \eqref{eq:tilde-weakconv2-u-eps}, \eqref{eq:tilde-weakconv2-p-eps}, we 
can modify the proof of Lemma \ref{eq:limit-eqs-tilde} to infer 
that $\tilde \bu,\tilde p,\tilde v_i,\tilde v_e,
\tilde v=\tilde v_i-\tilde v_e, \tilde w, \tilde \gamma$ collectively 
represent a weak martingale solution of the stochastic 
electromechanical bidomain model, thereby concluding the proof 
of Theorem \ref{thm:martingale} (see also the time-continuity 
discussion at the end of this section).

\medskip

Despite the weak convergences \eqref{eq:tilde-weakconv2-p-eps}, 
there exist no $\eps$-independent a priori 
estimates which can guarantee the weak 
convergence of the pressure $\tilde p_\eps$ 
as $\eps\to 0$. This factor constitutes 
a key distinction between the ``$n \to \infty$" limit 
and the ``$\eps \to 0$" limit. More specifically, this discrepancy 
is attributable to the influence of the artificial compressibility 
present in \eqref{eq:weakform-tilde-Stokes}, which causes 
the estimates on $p_{\eps}$ to become $\eps$-dependent. 
Therefore, to tackle the term $-\tilde p_{\eps}\Grad\cdot \bv$ in 
\eqref{eq:weakform-tilde-Stokes} when passing to the limit, we will 
adopt a roundabout strategy which capitalizes on the structure of the equation 
and incorporates a stochastic version \cite{Langa:2003aa} of de Rham's theorem.  
The aim is to prove that the pressure $\tilde p_\eps$ 
converges weakly in $\bx$, in the sense of distributions 
in $t$, and a.e.~in $\tilde \omega$. 
To facilitate this aim, we first recall a 
well-known result \cite[Theorem IV.3.1]{Boyer:2012aa} 
about the existence of a continuous right-inverse 
of the divergence operator.

\begin{thm}\label{thm:pression-rec}
Let $D$ be a bounded, connected and Lipschitz 
open subset of $\R^d$. For all functions $q\in L^2_0(D)
:=\Setb{q\in L^2(D):\int_{D}q\dx=0}$, 
there exists a vector $\bv\in [H^1_0(D)]^d$ 
such that $\nabla\cdot \bv = q$.
\end{thm}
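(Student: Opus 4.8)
The plan is to deduce the statement from the single fact that the divergence operator $\Div:[H^1_0(D)]^d\to L^2_0(D)$ is a bounded, linear, \emph{surjective} map; in the Hilbert-space setting surjectivity immediately yields a bounded linear right inverse. Indeed, set $X:=[H^1_0(D)]^d$, let $N:=\ker\Div\subset X$ and let $N^\perp$ be its orthogonal complement in $X$. If $\Div$ is onto $L^2_0(D)$, then $\Div|_{N^\perp}:N^\perp\to L^2_0(D)$ is a bounded linear bijection, hence by the open mapping theorem has a bounded linear inverse $S$, and $\bv:=Sq\in[H^1_0(D)]^d$ satisfies $\Div\bv=q$ with $\norm{\bv}_{[H^1_0(D)]^d}\le C_D\norm{q}_{L^2(D)}$. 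So the entire problem reduces to surjectivity of $\Div$.

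For surjectivity I would argue by duality and the closed range theorem. First, $\Div$ is bounded, and its range lies in $L^2_0(D)$ since $\int_D\Div\bv\dx=0$ for $\bv\in[H^1_0(D)]^d$. Up to sign, the Hilbert adjoint of $\Div:X\to L^2(D)$ is the distributional gradient $\nabla:L^2(D)\to[H^{-1}(D)]^d$, which is bounded and, $D$ being connected, has kernel equal to the constants. By the closed range theorem, $\mathrm{Range}(\Div)$ is closed and equals ${}^{\perp}\!\bigl(\ker\nabla\bigr)=L^2_0(D)$ as soon as $\nabla$ has closed range, and a sufficient condition for the latter is the \emph{Ne\v{c}as inequality}
\begin{equation*}
	\norm{q}_{L^2(D)}\le C_D\,\norm{\nabla q}_{[H^{-1}(D)]^d},\qquad\forall\, q\in L^2_0(D),
\end{equation*}
which also makes $\nabla|_{L^2_0(D)}$ injective. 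Thus everything hinges on this inequality.

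The Ne\v{c}as inequality is the crux, and the only point where the Lipschitz character of $\partial D$ enters. I would first prove the \emph{Lions lemma}, namely $\norm{q}_{L^2(D)}\le C\bigl(\norm{q}_{H^{-1}(D)}+\norm{\nabla q}_{[H^{-1}(D)]^d}\bigr)$ for all $q\in L^2(D)$. This is established by localization: a finite open cover of $\overline{D}$ with bi-Lipschitz charts flattens $\partial D$ near each boundary point so that $D$ locally looks like a cube or a half-space; on such model domains the estimate follows from a direct mollification/Fourier argument; one then patches the local estimates with a subordinate partition of unity, the commutators produced by differentiating the cutoffs being of order $H^{-1}$ in $q$ and hence absorbable. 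One then upgrades Lions' lemma to the Ne\v{c}as inequality on $L^2_0(D)$ by a compactness--contradiction argument: if it failed, there would be $q_m\in L^2_0(D)$ with $\norm{q_m}_{L^2(D)}=1$ and $\norm{\nabla q_m}_{[H^{-1}(D)]^d}\to0$; since $L^2(D)\hookrightarrow H^{-1}(D)$ compactly (Rellich), a subsequence converges in $H^{-1}(D)$, whereupon Lions' lemma shows $\{q_m\}$ is Cauchy in $L^2(D)$, with limit $q$ satisfying $\norm{q}_{L^2(D)}=1$, $\int_D q\dx=0$, and $\nabla q=0$; connectedness of $D$ forces $q$ to be constant, hence $q=0$, a contradiction. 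The genuinely delicate part is the boundary flattening in Lions' lemma --- keeping all changes of variables within $H^1$/$H^{-1}$ scales under merely Lipschitz maps; an alternative route that avoids the $H^{-1}$ duality altogether is Bogovskii's explicit integral-operator construction on domains star-shaped with respect to a ball (with the $L^2$-boundedness of its gradient coming from Calder\'on--Zygmund theory), glued via a partition of unity for general Lipschitz $D$, correcting the zero-mean divergence mismatch on overlaps inductively. Since the result is classical and quoted from \cite{Boyer:2012aa}, in the paper I would simply invoke that reference; the above is the argument one reconstructs for a self-contained treatment.
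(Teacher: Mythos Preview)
Your outline is correct and is precisely the standard argument found in the cited reference \cite[Theorem IV.3.1]{Boyer:2012aa}. Note, however, that the paper does not prove this theorem at all: it is stated as a recalled classical result with a citation and no proof, exactly as you anticipate in your final sentence. So there is nothing to compare --- the paper's ``proof'' is the citation, and your reconstruction (surjectivity of $\Div$ via the closed range theorem, reduced to the Ne\v{c}as inequality, itself obtained from Lions' lemma plus a compactness--contradiction argument, or alternatively via Bogovskii's operator) is the content one finds upon following that citation.
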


Our analysis also requires  the following stochastic 
version of de Rham's theorem, as proved in the work 
of Langa, Real, and Simon \cite[Theorem 4.1]{Langa:2003aa}.
\begin{thm}\label{lem:Simon}
Let $D$ be a bounded, connected and Lipschitz 
open subset of $\R^d$, let $\bigl(\Omega,\cG,P\bigr)$ 
be a complete probability space and, given $r_0 \in [1, \infty]$, 
$r_1 \in [1, \infty]$ and $s_1 \in \mathbb{Z}$, let
$$
\bh \in L^{r_0} \bigl(\Omega,\mathcal{G},P;
W^{s_1,r_1}(0,T;[H^{-1}(D)]^d)\bigr)
$$
be such that, for all $\bv \in (\D(D))^d$ such 
that $\nabla \cdot \bv = 0$, $P$-a.s.,
$$
\bigl \langle \bh,\bv \bigr\rangle_{[\Dp(D)]^d \times [\D(D)]^d} = 0 
\quad \text{in $\Dp(0,T)$}.
$$
Then there exists a unique
$p \in L^{r_0} \bigl(\Omega,\mathcal{G},P;
W^{s_1,r_1}(0,T; L^2(D))\bigr)$
such that, $P$-a.s., 
$$
\nabla p = \bh 
\,\,\, \text{in $[\Dp((0, T) \times D)]^d$}, 
\qquad 
\int_D p \dx = 0 
\,\,\, \text{in $\Dp(0, T)$}.
$$
Moreover, there exists a positive number $c(D)$, 
independent of $\bh$, such that, $P$-a.s.,
$$
\norm{p}_{W^{s_1,r_1}(0,T;L^2(D))} 
\leq c(D) \norm{\bh}_{W^{s_1,r_1}(0,T;[H^{-1}(D)]^d)}.
$$
\end{thm}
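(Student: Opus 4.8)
The plan is to reduce Theorem~\ref{lem:Simon} to the deterministic de~Rham lemma applied pointwise in $(\omega,t)$, but to carry out the whole construction \emph{linearly} so that measurability in $\omega$, the time regularity, and the quantitative bound all come for free. The device that makes this possible is Theorem~\ref{thm:pression-rec}: I would fix once and for all a bounded linear operator $B\colon L^2_0(D)\to [H^1_0(D)]^d$ with $\nabla\cdot(Bq)=q$ and $\norm{Bq}_{[H^1_0(D)]^d}\le c(D)\norm{q}_{L^2(D)}$, and for $q\in L^2(D)$ write $\bar q:=q-\frac{1}{\abs{D}}\int_D q\dx\in L^2_0(D)$, so that $q\mapsto B\bar q$ is bounded linear from $L^2(D)$ into $[H^1_0(D)]^d$.

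First I would \emph{define} the pressure. For $P$-a.e.~$\omega$ and a.e.~$t\in(0,T)$, set for every $q\in L^2(D)$
\[
  \bigl(p(\omega,t),q\bigr)_{L^2(D)}
  := -\,\bigl\langle \bh(\omega,t),\,B\bar q\bigr\rangle_{[H^{-1}(D)]^d,\,[H^1_0(D)]^d}.
\]
Since $q\mapsto B\bar q$ is bounded linear, the right-hand side is a bounded linear functional of $q$, so $p(\omega,t)$ is a well-defined element of $L^2(D)$ with $\norm{p(\omega,t)}_{L^2(D)}\le c(D)\,\norm{\bh(\omega,t)}_{[H^{-1}(D)]^d}$, and taking $q\equiv 1$ gives $\int_D p(\omega,t)\dx=0$. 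Because $p$ is obtained from $\bh$ by composing with a \emph{fixed} bounded operator (the adjoint of $q\mapsto B\bar q$), it inherits from $\bh$: joint measurability in $(\omega,t)$; membership in $W^{s_1,r_1}(0,T;L^2(D))$ for $P$-a.e.~$\omega$ (distributional $t$-derivatives commute with the spatial operator, which covers negative $s_1\in\mathbb Z$); and the bound $\norm{p}_{L^{r_0}(\Omega;W^{s_1,r_1}(0,T;L^2(D)))}\le c(D)\,\norm{\bh}_{L^{r_0}(\Omega;W^{s_1,r_1}(0,T;[H^{-1}(D)]^d))}$.

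Next I would verify $\nabla p=\bh$ in $[\Dp((0,T)\times D)]^d$. Fix $\bv\in[\D(D)]^d$; then $\int_D\nabla\cdot\bv\dx=0$, so $\overline{\nabla\cdot\bv}=\nabla\cdot\bv$, and $\bw:=\bv-B(\nabla\cdot\bv)\in[H^1_0(D)]^d$ is divergence free. Since the hypothesis $\langle\bh,\bv\rangle=0$ in $\Dp(0,T)$ for divergence-free $\bv\in[\D(D)]^d$ extends, by density of divergence-free test fields in divergence-free $[H^1_0(D)]^d$ fields (here the bounded connected Lipschitz geometry of $D$ enters), to $\langle\bh,\bw\rangle=0$, we get $\langle\bh,B(\nabla\cdot\bv)\rangle=\langle\bh,\bv\rangle$ in $\Dp(0,T)$. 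Combined with the definition of $p$,
\[
  \bigl\langle \nabla p,\bv\bigr\rangle
  = -\bigl(p,\nabla\cdot\bv\bigr)_{L^2(D)}
  = \bigl\langle \bh,\,B(\overline{\nabla\cdot\bv})\bigr\rangle
  = \bigl\langle \bh,\bv\bigr\rangle
  \quad\text{in }\Dp(0,T)
\]
for all such $\bv$, which is exactly $\nabla p=\bh$ distributionally on $(0,T)\times D$. Uniqueness is immediate: if $\nabla p_1=\nabla p_2$ and $\int_D p_i\dx=0$, then for a.e.~$(\omega,t)$ the difference $p_1-p_2$ is a constant with zero mean on the connected set $D$, hence $0$; the norm bound has already been recorded.

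The step I expect to be the main obstacle is the functional-analytic bookkeeping that turns the pointwise-in-$(\omega,t)$ construction into a genuine element of the Bochner--Sobolev space with the asserted regularity: one must check carefully that the distributional $t$-derivatives of $\bh$ (a $W^{s_1,r_1}$-valued, possibly negative-order object) pass through the fixed spatial operator $q\mapsto B\bar q$, that the joint measurability survives, and that the density statement ``$\{\bw\in[H^1_0(D)]^d:\nabla\cdot\bw=0\}=\overline{\{\bv\in[\D(D)]^d:\nabla\cdot\bv=0\}}$'' genuinely holds on the bounded connected Lipschitz domain $D$ (this is where the geometric hypotheses, and ultimately Theorem~\ref{thm:pression-rec}, are used). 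Everything else is a routine consequence of the linearity and boundedness of $B$.
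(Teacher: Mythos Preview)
The paper does not prove this theorem; it quotes it verbatim from Langa, Real, and Simon \cite[Theorem~4.1]{Langa:2003aa} and uses it as a black box in the pressure-convergence argument (Proposition~\ref{prop:pressure-conv}). So there is no ``paper's proof'' to compare against.

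Your proposal is, in fact, a clean and essentially correct proof of the cited result. The idea of fixing once a bounded linear right inverse $B$ of the divergence (which Theorem~\ref{thm:pression-rec}, i.e.\ \cite[Theorem~IV.3.1]{Boyer:2012aa}, does provide as a bounded operator, not merely as an existence statement) and then defining $p$ as the image of $\bh$ under the adjoint of $q\mapsto B\bar q$ is exactly the right device: it makes the construction linear, so measurability in $\omega$ and the $W^{s_1,r_1}$ time regularity (including negative $s_1$) are inherited automatically, and the norm bound is immediate. Your verification that $\nabla p=\bh$ via the decomposition $\bv=\bw+B(\nabla\cdot\bv)$ and the density of divergence-free test fields in divergence-free $[H^1_0(D)]^d$ is standard and valid on bounded connected Lipschitz domains.

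Two small points deserve explicit mention when you write this up. First, for $s_1<0$ the pointwise-in-$t$ definition ``for a.e.\ $t$'' is formal; you should instead say that the bounded operator $T\colon [H^{-1}(D)]^d\to L^2(D)$, $T\bh:=-(B\,\overline{\cdot}\,)^*\bh$, extends to $W^{s_1,r_1}\bigl(0,T;[H^{-1}(D)]^d\bigr)\to W^{s_1,r_1}\bigl(0,T;L^2(D)\bigr)$ and define $p(\omega):=T\bigl(\bh(\omega)\bigr)$ directly---which is what you indicate in your final paragraph. Second, the hypothesis ``$\langle\bh,\bv\rangle=0$, $P$-a.s.'' is stated for each fixed $\bv$, so the null set may depend on $\bv$; you should pass to a countable dense family of divergence-free test fields to obtain a single full-measure set on which the identity holds for all such $\bv$. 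Both are routine, and you have already flagged the first.
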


We can now demonstrate  weak convergence of the 
pressure $\tilde p_\eps$.

\begin{prop}\label{prop:pressure-conv}
There exists $\tilde p\in L^2\bigl(\tilde \Omega;
L^2((0,T)\times \cO)\bigr)$ such that
\begin{equation}\label{eq:weak-conv-peps}
	\tilde p_\eps \weakeps \tilde p \quad 
	\text{in $L^2(\cO)$, in $\Dp(0,T)$, a.s.},
\end{equation}
that is, for any $q\in L^2(\cO)$ and $\phi\in \D(0,T)$,
$$
\int_0^T \int_{\cO} 
\tilde p_\eps q\phi \dx\dt
\toeps \int_0^T \int_{\cO} \tilde p q\phi \dx\dt, 
\quad \text{almost surely}.
$$
\end{prop}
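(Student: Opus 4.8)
The plan is to recover the limit pressure $\tilde p$ via the stochastic de~Rham theorem (Theorem~\ref{lem:Simon}) applied to the time-integrated momentum equation, exploiting that the $\eps$-dependent terms in \eqref{eq:weakform-tilde-Stokes} vanish weakly. First I would rewrite the first equation of \eqref{eq:weakform-tilde-Stokes}, for a fixed test function $\bv\in[H^1_0(\cO)]^3$, as $\Grad$ applied (in the distributional sense in $\bx$) to a potential: introduce the primitive process
\begin{equation*}
	\tilde P_\eps(t):=\int_0^t \tilde p_\eps(s)\ds,
	\qquad
	\tilde{\mathbf U}_\eps(t):=\int_0^t \Div\bigl(\nabla\tilde\bu_\eps\,\bsigma(\bx,\tilde\gamma_\eps)\bigr)\ds,
\end{equation*}
so that, $\tilde P$-a.s., $\Grad \tilde P_\eps = \eps\tilde\bu_\eps + \tilde{\mathbf U}_\eps - \int_0^t\ffte\ds$ in $[\Dp((0,T)\times\cO)]^3$ (the boundary term drops out against $H^1_0$ test functions). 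Set $\tilde{\bh}_\eps$ equal to the right-hand side; by the a priori estimates of Lemma~\ref{lem:apriori-est} (in particular \eqref{Gal:est2-mechanic}, \eqref{Gal:est1-mechanic-u-p}, and the bound on $\ff$ via \eqref{def:ff}) the family $\Set{\tilde{\bh}_\eps}$ is bounded in $L^2\bigl(\tilde\Omega;W^{1,2}(0,T;[H^{-1}(\cO)]^3)\bigr)$ uniformly in $\eps$. Along a subsequence $\tilde{\bh}_\eps\weak \tilde{\bh}$ in that space, and because $\tilde{\bh}_\eps=\Grad\tilde P_\eps$ annihilates divergence-free test fields, so does $\tilde{\bh}$; Theorem~\ref{lem:Simon} (with $r_0=r_1=2$, $s_1=1$) then produces a unique $\tilde P\in L^2\bigl(\tilde\Omega;W^{1,2}(0,T;L^2_0(\cO))\bigr)$ with $\Grad\tilde P=\tilde{\bh}$, $\tilde P$-a.s.

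Next I would identify $\tilde p:=\partial_t\tilde P$ as the sought limit and upgrade the convergence. From the construction $\tilde P_\eps \weak \tilde P$ (up to adjusting by the spatial mean, which is legitimate since $\int_\cO q\dx=0$ is not imposed here but can be handled by replacing $\tilde p_\eps$ with $\tilde p_\eps - \frac{1}{|\cO|}\int_\cO \tilde p_\eps$, the mean part being recoverable separately or absorbed since $\Div\cdot\bv$ integrates to zero against constants when $\bv\in H^1_0$). Differentiating in $t$ and using the boundedness of $\Set{\tilde p_\eps}$ — which itself follows because $\Grad\tilde p_\eps = \partial_t\tilde{\bh}_\eps = \ptl_t(\eps\tilde\bu_\eps)+\Div(\nabla\tilde\bu_\eps\bsigma)-\ffte$ is controlled in a negative Sobolev space while Theorem~\ref{thm:pression-rec} gives the reverse Poincaré-type bound $\norm{\tilde p_\eps}_{L^2(\cO)}\lesssim \norm{\Grad\tilde p_\eps}_{H^{-1}(\cO)}$ after subtracting the mean — I obtain a uniform $L^2\bigl(\tilde\Omega;L^2((0,T)\times\cO)\bigr)$ bound on $\tilde p_\eps$. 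Hence $\tilde p_\eps\weak\tilde p$ weakly in $L^2\bigl(\tilde\Omega;L^2((0,T)\times\cO)\bigr)$ along a subsequence, and testing against products $q(\bx)\phi(t)$ with $q\in L^2(\cO)$, $\phi\in\D(0,T)$ yields precisely \eqref{eq:weak-conv-peps}; consistency of $\partial_t\tilde P$ with this weak limit follows from integrating the momentum identity against $\bv\phi'(t)$ and passing to the limit term by term.

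The technical heart — and the main obstacle — is obtaining the $\eps$-\emph{independent} $L^2$ bound on $\tilde p_\eps$ itself. Unlike in the $n\to\infty$ passage, equation \eqref{eq:weakform-tilde-Stokes} gives only the $\eps$-weighted bound \eqref{Gal:est1-mechanic-u-p} on $\eps p_\eps$, which degenerates. The resolution is exactly the circuitous route flagged in the text: one never estimates $\tilde p_\eps$ directly in $L^2$ but instead estimates $\Grad\tilde p_\eps$ in $W^{-1,2}(0,T;H^{-1}(\cO))$ using the momentum equation (all remaining terms there — $\ptl_t(\eps\tilde\bu_\eps)$, the elliptic term, the boundary term, and $\ffte$ — being uniformly bounded by Lemma~\ref{lem:apriori-est}), and then inverts the gradient via Theorem~\ref{thm:pression-rec}/Theorem~\ref{lem:Simon} to transfer this into an $L^2$ bound on the mean-zero part of $\tilde p_\eps$, with the mean of $\tilde p_\eps$ pinned down by testing the momentum equation against a fixed $\bv_0$ with $\Div\bv_0\equiv 1$ (which exists on the connected Lipschitz domain $\cO$). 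Care must also be taken that the Skorokhod space for $\eps p_\eps$ includes $(L^2_{t,\bx})_w$ so that $\eps\tilde p_\eps\weakeps 0$ as recorded in \eqref{eq:tilde-weakconv2-p-eps}, which is what makes the $\eps$-weighted term in \eqref{eq:weakform-tilde-Stokes} disappear in the limit while the genuine limit pressure $\tilde p$ is produced separately by the de~Rham argument above.
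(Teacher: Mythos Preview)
Your proposal has a genuine gap at its core: the claimed uniform bound $\tilde p_\eps\in L^2\bigl(\tilde\Omega;L^2((0,T)\times\cO)\bigr)$ does not follow from the available estimates, and the paper explicitly states that no $\eps$-independent a~priori estimate on $\tilde p_\eps$ is available. Your justification relies on $\partial_t(\eps\tilde\bu_\eps)$ being ``uniformly bounded by Lemma~\ref{lem:apriori-est}'' in $L^2(0,T;H^{-1})$, but that lemma gives no time-derivative control on $\tilde\bu_\eps$; the only equation relating $\eps\partial_t\tilde\bu_\eps$ to known quantities is the momentum equation itself, where it appears \emph{coupled} with $\nabla\tilde p_\eps$, so your argument is circular. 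If instead you work in $W^{-1,2}(0,T;H^{-1})$ (where $\partial_t(\eps\tilde\bu_\eps)$ is indeed controlled via $\eps\tilde\bu_\eps\in L^2_tL^2_\bx$), the Ne\v{c}as/de~Rham step only yields $\tilde p_\eps$ bounded in $W^{-1,2}(0,T;L^2(\cO))$, not in $L^2((0,T)\times\cO)$---so you cannot conclude weak $L^2_{\tilde\omega,t,\bx}$ convergence. Moreover, weak convergence in $L^2(\tilde\Omega;\ldots)$ is not the almost-sure convergence asserted in \eqref{eq:weak-conv-peps}.

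The paper's route is different in spirit: it never bounds $\tilde p_\eps$ at all. First it passes to the limit $\eps\to 0$ in \eqref{eq:weakform-tilde-Stokes} using only \emph{divergence-free} test functions, for which the pressure drops out entirely; this identifies the limit $\bh=\Div(\nabla\tilde\bu\,\bsigma(\bx,\tilde\gamma))+\fft$ directly. Theorem~\ref{lem:Simon} is then applied to this \emph{limit} $\bh$ (not to $\tilde\bh_\eps$) to construct $\tilde p\in L^2_{\tilde\omega,t,\bx}$. Convergence $\int_\cO(\tilde p_\eps-\tilde p)\nabla\cdot\bv\to 0$ in $\Dp(0,T)$ is shown by subtracting the $\eps$-equation from the limit equation and throwing the time derivative onto the test function $\phi\in\D(0,T)$, so that $\eps\partial_t\tilde\bu_\eps$ becomes $-\eps\tilde\bu_\eps\,\phi'$, which vanishes by \eqref{eq:tilde-weakconv2-u-eps}---this is exactly why the convergence is asserted only in $\Dp(0,T)$ and not in $L^2_t$. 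Finally, the spatial mean (which your proposal treats vaguely) is pinned down by testing with $\bv=(x_1,0,0)\in[H^1(\cO)]^3$, whose divergence is identically $1$; the resulting limit defines the constant $\bar C(\tilde\omega,t)$ that is added to the mean-zero de~Rham pressure. All of this is carried out pathwise, for $\tilde P$-a.e.~$\tilde\omega$, using the almost-sure convergences \eqref{eq:tilde-weakconv1-eps}, which is what delivers the a.s.~statement.
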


\begin{proof}
Given any test function $\bv\in  [\D(\cO)]^3$ such 
that $\nabla\cdot\bv=0$, the first equation 
in \eqref{eq:weakform-tilde-Stokes} becomes, 
for a.e.~$(\tilde \omega,t)$ (w.r.t. $\tilde P\times dt$),
\begin{equation*}
	\eps\int_{\cO}\tilde\bu_{\eps}(t)\cdot \bv \dx
	+\int_0^t\int_{\cO} \nabla\tilde\bu_{\eps}
	\bsigma(\bx,\tilde\gamma_{\eps}):\nabla \bv\dx \ds	
	=\int_0^t\int_{\cO} \ffte \cdot \bv \dx\ds.
\end{equation*}
By echoing the arguments used in the proof of 
Lemma \ref{eq:limit-eqs-tilde} and employing the convergences 
outlined in \eqref{eq:tilde-weakconv1-eps}, \eqref{eq:artificial-init-conv-eps} 
and \eqref{eq:tilde-weakconv2-u-eps}, we can 
send $\eps\to 0$ to ultimately reach the equation
\begin{equation}\label{eq:zero-div-I}
	\int_0^t\int_{\cO} \nabla\tilde\bu
	\bsigma(\bx,\tilde\gamma):\nabla \bv\dx \ds
	= \int_0^t\int_{\cO} \fft \cdot \bv \dx\ds, 
	\quad \text{a.s., $t\in [0,T]$},
\end{equation}
where $\fft=\ff(t,\bx,\tilde \gamma)$, cf.~\eqref{def:ff}.
By defining 
$$
\bh:=\Div \bigl( \nabla\tilde \bu 
\sigma(\bx,\tilde \gamma)\bigr)+\fft(t,\bx,\tilde \gamma)
\in L^2\bigl( \tilde \Omega; L^2(0,T;[H^{-1}(\cO)]^3)\bigr),
$$
and differentiating \eqref{eq:zero-div-I} with respect to $t$, 
it follows in a straightforward manner that
$$
\bigl \langle \bh, \bv \bigr\rangle_{[\Dp(\cO)]^3 \times [\D(\cO)]^3}=0 
\quad \text{in $\Dp(0,T)$, a.s.},
$$
$\forall \bv\in  [\D(\cO)]^3$ with $\nabla\cdot\bv=0$. 
Therefore, by Theorem \ref{lem:Simon} (with 
$r_0=r_1=2$ and $s_1=0$), there exists 
$\tilde P\in L^2\bigl(\tilde \Omega;L^2((0,T)\times \cO)\bigr)$ 
such that $\nabla \tilde P = \bh$ in $[\Dp((0, T) \times D)]^3$ 
with $\int_{\cO} \tilde P \dx=0$ in $\Dp(0,T)$. Given $\tilde P$, let 
us introduce the modified pressure 
\begin{equation}\label{eq:tP-def}
	\tilde p=\tilde P
	+\frac{1}{\abs{\cO}}\bar{C}(\tilde \omega,t),
\end{equation}
where the process $\bar{C}$, which is 
constant in $\bx$, will be specified later. Clearly, we have
\begin{equation}\label{eq:tP-average}
	\int_{\cO} \tilde p\dx = \bar{C} 
	\quad \text{in $\Dp(0,T)$}
\end{equation} 
and
$$
\Div \bigl( \nabla \tilde \bu 
\sigma(\bx,\tilde \gamma)\bigr)+\fft=\nabla \tilde p
\quad \text{in $[\Dp((0,T)\times \cO)]^3$, \, a.s.} 
$$

It takes a standard argument to turn the 
the first equation in \eqref{eq:weakform-tilde-Stokes}, 
which is weak in $\bx$, into the following formulation that 
is weak in $(t,\bx)$:
$$
\eps \pt \tilde \bu_\eps
-\Div \bigl(\nabla \tilde \bu_\eps 
\sigma(\bx,\tilde \gamma_\eps)\bigr)
+\Grad \tilde p_\eps-\tilde\ff_\eps=0
\quad \text{in $[\Dp((0,T)\times \cO)]^3$, \, a.s.},
$$
so that the equation
\begin{equation}\label{eq:weak-conv-peps-tmp0}
	\nabla \bigl(\tilde p_\eps-\tilde p\bigr)
	=-\eps \pt \tilde \bu_\eps
	+\Div \bigl(\nabla \tilde \bu_\eps 
	\sigma(\bx,\tilde \gamma_\eps)
	-\nabla \tilde \bu 
	\sigma(\bx,\tilde \gamma)\bigr)+(\tilde\ff_\eps-\tilde\ff)
\end{equation}
holds a.s.~in $[\Dp((0,T)\times \cO)]^3$.  
Consider a test function $\bv\in \D(\cO)$. 
Then \eqref{eq:weak-conv-peps-tmp0} implies
\begin{align*}
	&\int_{\cO}\bigl(\tilde p_\eps-\tilde p\bigr)
	\nabla\cdot \bv\dx
	=-\int_{\cO} \eps \partial_t \tilde \bu_\eps\cdot \bv\dx
	\\ & \qquad\quad
	+\int_{\cO} 
	\bigl(\nabla \tilde \bu_\eps 
	\sigma(\bx,\tilde \gamma_\eps)
	-\nabla \tilde \bu \sigma(\bx,\tilde \gamma)\bigr)
	: \nabla \bv \dx
	\\ & \qquad\quad
	+\int_{\cO} \bigl(\tilde\ff_\eps
	-\tilde\ff \bigr)\cdot\bv \dx 
	\quad \text{in $\Dp(0,T)$}.
\end{align*}
From \eqref{eq:tilde-weakconv2-u-eps}, as $\eps \to 0$, the first 
term on the right-hand side converges 
to zero in $\Dp(0,T)$. Following a similar reasoning 
as in the proof of 
Lemma \ref{eq:limit-eqs-tilde}, we can 
deduce that 
$$
\nabla \tilde \bu_\eps 
\sigma(\bx,\tilde \gamma_\eps)
-\nabla \tilde \bu \sigma(\bx,\tilde \gamma)\weakeps 0, 
\quad 
\tilde\ff_\eps-\tilde\ff\weakeps 0
\qquad \text{in $L^2_{t,\bx}$, a.s.}
$$
As a result, the second and third terms also tend to zero 
in $\Dp(0,T)$, so that $\forall \bv\in \cD(\cO)$,
\begin{equation}\label{eq:weak-conv-peps-tmp1}
	\lim_{\eps\to 0}\int_{\cO}
	\bigl( \tilde p_\eps-\tilde p\bigr) \nabla\cdot \bv \dx=0 
	\quad \text{in $\Dp(0,T)$, a.s.}
\end{equation}
Through a density argument, this 
statement holds true for all $\bv \in [H^1_0(\cO)]^3$.

To complete the proof, we must show that the statement 
is true for all $\bv\in [H^1(\cO)]^3$. For this sake,
consider an arbitrary $q\in L^2(\cO)$ and set $\bar{q}=q-C_q$, 
where $C_q:=\frac{1}{\abs{\cO}}\int_{\cO} q\dx$; thus, $\bar{q}$ 
belongs to the space $L^2_0(\cO)$. 
By Theorem \ref{thm:pression-rec}, there exists 
a vector $\bar \bv\in [H^1_0(\cO)]^3$ such 
that $\nabla\cdot \bar \bv=\bar q$. 
Subsequently, it can be inferred from 
\eqref{eq:weak-conv-peps-tmp1} that
\begin{equation}\label{eq:t-peps-conv-tmp1}
	\lim_{\eps\to 0} \int_{\cO}
	\bigl( \tilde p_\eps-\tilde p\bigr) q \dx
	=C_q\lim_{\eps\to 0} R_\eps 
	\quad \text{in $\Dp(0,T)$, a.s.},
\end{equation}
where (no spatial test function)
\begin{equation}\label{eq:Reps-def}
	R_\eps=\int_{\cO}
	\bigl( \tilde p_\eps-\tilde p\bigr)\dx
	=\int_{\cO} \tilde p_\eps\dx-\bar{C},
\end{equation}
recalling \eqref{eq:tP-average}. 
To complete the proof of the proposition, we only need 
to demonstrate that
\begin{equation}\label{eq:Reps-limit}
	\lim_{\eps \to 0} R_{\eps} = 0
	\quad \text{a.s.~in $\Dp(0,T)$.}
\end{equation}

Considering the first equation in \eqref{eq:weakform-tilde-Stokes}, we 
differentiate it weakly with respect to $t$. 
As $\eps\to 0$ and by adapting the arguments used in the proof 
of Lemma \ref{eq:limit-eqs-tilde}, combined with the convergences 
described in \eqref{eq:tilde-weakconv1-eps}, \eqref{eq:artificial-init-conv-eps}, 
and \eqref{eq:tilde-weakconv2-u-eps}, we deduce that
\begin{equation}\label{eq:weak-conv-peps-temp2}
	\begin{split}
		&\lim_{\eps \to 0} \int_{\cO}\tilde p_{\eps} 
		\Grad\cdot \bv\dx
		=\int_{\cO} \nabla\tilde\bu
		\bsigma(\bx,\tilde\gamma):\nabla \bv \dx
		\\ & \qquad
		+\int_{\partial \cO}
		\alpha\tilde\bu\cdot \bv \, dS(\bx)
		-\int_{\cO} \tilde\ff \cdot \bv\dx 
		\quad \text{in $\Dp(0,T)$, a.s.}, 		
	\end{split}
\end{equation}
for all $\bv\in [H^1(\cO)]^3$. Let us specify the test function $\bv$ 
in \eqref{eq:weak-conv-peps-temp2} more precisely as 
$\mathbf{v}=(x_1,0,0)$, which belongs to the space $[H^1(\cO)]^3$ 
and satisfies the condition $\nabla \cdot \bv=1$. 
By substituting this vector $\bv$ into 
\eqref{eq:weak-conv-peps-temp2}, we obtain
\begin{equation}\label{eq:weak-conv-peps-temp3}
	\begin{split}
		\lim_{\eps \to 0} \int_{\cO} \tilde p_{\eps}\dx
		& = \int_{\cO} \nabla\tilde\bu
		\bsigma(\bx,\tilde\gamma):\nabla \bv\dx 
		\\ & \quad
		+\int_{\partial \cO}\alpha\tilde\bu\cdot \bv \, dS(\bx)
		-\int_{\cO} \tilde\ff \cdot \bv\dx
		\quad \text{in $\Dp(0,T)$, a.s.}
	\end{split}
\end{equation}
Observe that the right-hand side represents 
a real-valued process, depending only on 
$(\tilde \omega,t)$. Let us identify the $\bar{C}$ in \eqref{eq:tP-def} 
and \eqref{eq:Reps-def} with the right-hand side 
of \eqref{eq:weak-conv-peps-temp3}. With this specification, 
\eqref{eq:Reps-limit} naturally ensues.

In summary, combining \eqref{eq:t-peps-conv-tmp1} 
with \eqref{eq:Reps-limit}, we get
$$
\lim_{\eps\to 0} \int_{\cO}
\bigl( \tilde p_\eps-\tilde p\bigr) q \dx
=0 \quad \text{in $\Dp(0,T)$, a.s.}
$$
Since the choice of $q\in L^2(\cO)$ was arbitrary, we can conclude 
that \eqref{eq:weak-conv-peps} holds true.
\end{proof}

The proof of Theorem \ref{thm:martingale} is nearly 
complete, pending the time-continuity requirements 
of $\tilde v, \tilde w, \tilde \gamma$. 
Revisiting the discussion at the end of 
Section \ref{sec:conv-nondegen}, we may assume that 
$\tilde v$, $\tilde w$, and $\tilde \gamma$ possess 
versions that belong a.s.~to $C([0,T];(H^1(\cO))^*)$. 
Specifically, this implies that $\tilde v$ and $\tilde w$ 
are predictable in $(H^1(\cO))^*$. This establishes 
part \eqref{eq:mart-adapt} of Definition \ref{def:martingale-sol}. 

It is possible to establish even stronger assertions 
regarding the continuity in time for 
$\tilde v$, $\tilde w$ and $\tilde \gamma$. 
Indeed, define $X$ and $Y$ by 
$$
X(t):= \tilde v(t)-\int_0^t \beta_v(\tilde v) \,d \tilde W^v(s), 
\quad 
Y(t) := \tilde w(t) - \int_0^t \beta_w(\tilde w)\, d \tilde W^w(s).
$$ 
Note that a.s., 
$$
X, Y \in L^2(0,T;H^1(\cO)), 
\quad 
\partial_t X, \partial_t Y \in L^2(0,T;(H^1(\cO))^*),
$$
where the second part comes from the equations satisfied by 
$X,Y$, see \eqref{eq:weakform}.  
As a result, according to \cite{Temam:1977aa}, 
$X$ and $Y$ are a.s.~in $C([0, T]; L^2(\cO))$. 
By standard theory, we know that 
$t \mapsto \int_0^t \beta(\tilde v) \dW$ is a.s.~in 
$C([0, T]; L^2(\Omega))$, where $(\beta, W)$ corresponds to 
$(\beta_v, \tilde W^v)$ and $(\beta_w, \tilde W^w)$. 
Consequently, we can deduce that $\tilde v$ and $\tilde w$ 
are a.s.~in $C([0, T]; L^2(\Omega))$. Similarly, it can be 
shown (in a simpler manner) that $\tilde \gamma$ is also a.s.~in 
$C([0, T]; L^2(\Omega))$.

\section{Numerical examples}\label{sec:numeric}
In this section, we numerically solve the proposed 
stochastic electromechanical model \eqref{coupbid}. 
The examples presented are primarily for academic purposes and 
incorporate only additive noise. More comprehensive 3D simulations 
with nonlinear noise functions are slated for future work. 

We consider a simple square domain, $\cO= (0,1) \times (0,1)$, 
employing a finite element discretization to address both 
the stochastic problem \eqref{coupbid} and its deterministic counterpart. 
For the electrical part of the model, we utilize a semi-implicit 
time-stepping scheme. Nonlinear terms are solved explicitly, while linear 
differential operators are handled implicitly. Regarding the mechanical 
components of the system, Newton's method is employed to 
solve the nonlinear equations.

The domain $\cO$  is discretized using an irregular mesh consisting 
of $952$ triangles and $517$ vertices. The finite element method 
is implemented in FreeFem++, utilizing $P_2$ elements for 
the following variables: the displacement $\bu$, 
the transmembrane potential $v$, 
the extracellular potential $v_e$, the gating variable $w$, and the 
activation variable $\gamma$. 
For the pressure $p$, $P_1$ elements are employed. 

The noise added to the simulations takes the form 
$\beta_v\, dW^v$ and $\beta_w\, dW^w$, where $W^v$ and $W^w$ 
are real-valued Brownian motions. The noise coefficients 
$\beta_v$ and $\beta_w$ are both set to a constant 
value $\beta$ equal to $0.5$ or $1.0$. An explicit discretization 
is applied to the noise terms, adding $\beta \sqrt{\Delta t} N(0,1)$ to 
the relevant equations at each time level, 
where $N(0,1)$ is the normal distribution with zero mean and unit variance, 
and $\Delta t$ is the time step.
 
We fix the time step at $\Delta t=0.0125$ and set the model 
parameters $\chi=1$ and $c_m=1$. The material (heart) is characterized as 
Neo-Hookean with an elastic modulus of $\mu=4$. The conductivity 
tensors are taken as follows:
$$
K_i=
\begin{pmatrix}
2 & 0
\\
0 & 1
\end{pmatrix}
\times10^{-2},
\qquad 
K_e= \begin{pmatrix}
4 & 0
\\
0 & 2
\end{pmatrix}
\times10^{-2}.
$$
The functions $\Ion$ and $H$ 
are given by the FitzHugh-Nagumo model:
$$
\Ion(v,w)=k\bigl(w+v(v-a)(v-1)\bigr),
\qquad
H(v,w)=d_1v-d_2w,
$$
where we specify $k=-80$, $a=0.25$, $d_1=0.17$, and $d_2=1$. 
An initial stimulus, shaped as a half-circle and centered 
at the point $(x,y)=(0,0.5)$, is applied at 
time $t = 0$ using the function
$$
v_0(x,y)=1
-\dfrac{1}{1+e^{-50\left(\sqrt{x^2+(y-0.5)^2}-0.18\right)}}.
$$
This stimulus is applied for a short duration of $0.01$ time units using
$$
\Iapp =
\begin{cases}
	v_0(x,y), & \text{for } 0 \leq t < 0.01,
	\\
	0, & \text{for } t \geq 0.01.
\end{cases}
$$
Furthermore, the initial values for the extracellular potential 
$v_e$ and the gating variable $w$ are both set to $0$.
Finally, the activation variable $\gamma$ is initialized 
according to the expression $-0.3 v_0/ (2-v_0)$.

The stimulus propagates throughout the entire domain, 
causing visible deformations. Figure \ref{SimFigures} illustrates 
the wavefront propagation as observed in both the deterministic and 
stochastic models. In the deterministic solutions, the wavefront appears 
smoother, whereas in the stochastic case, it is characterized 
by noticeable fluctuations. To analyze changes in the 
transmembrane potential $v$ at fixed points in the domain, measurements 
were taken at three locations: $(0,0.5)$ (the initiation point of the stimulus), 
$(0.5,0.5)$, and $(1,0.5)$. Figure \ref{APcompare} demonstrates 
the impact of noise on the transmembrane potential at these points, 
highlighting the contrasts with the more regular behavior 
of the deterministic model. 

\begin{figure}[ht]
  \centering
\includegraphics[width=4cm]{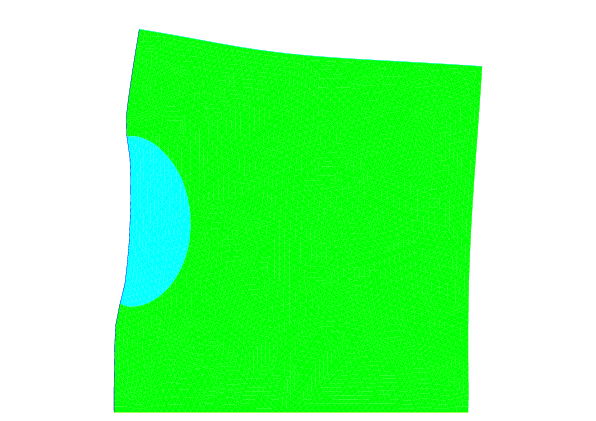}\hspace*{-1.3cm}  
\includegraphics[width=4cm]{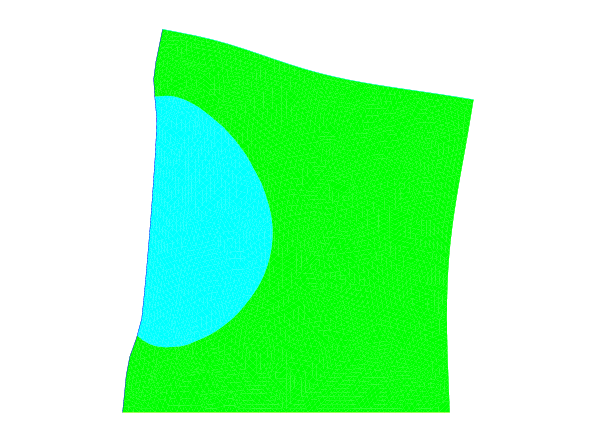}\hspace*{-1.3cm}
\includegraphics[width=4cm]{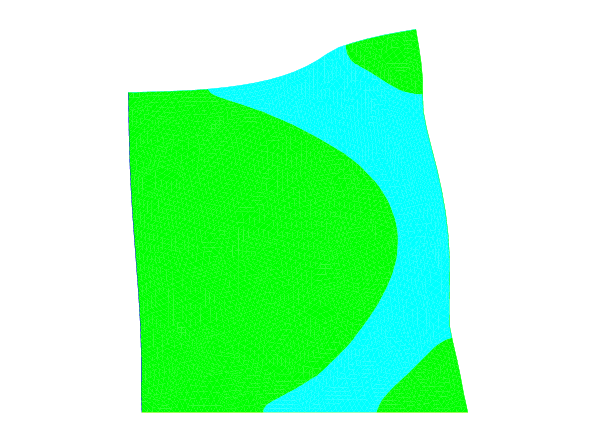} \hspace*{-1.3cm} \includegraphics[width=4cm]{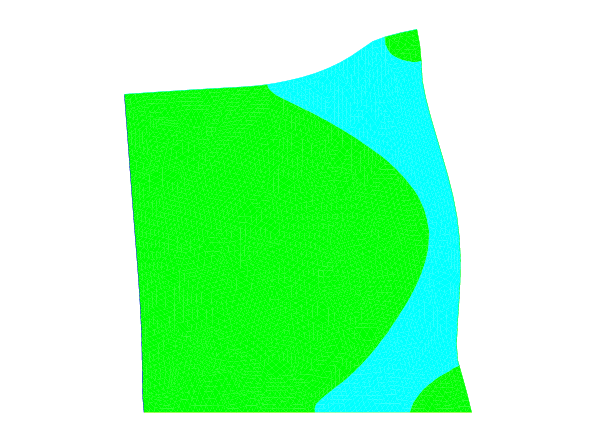}\\
\includegraphics[width=4cm]{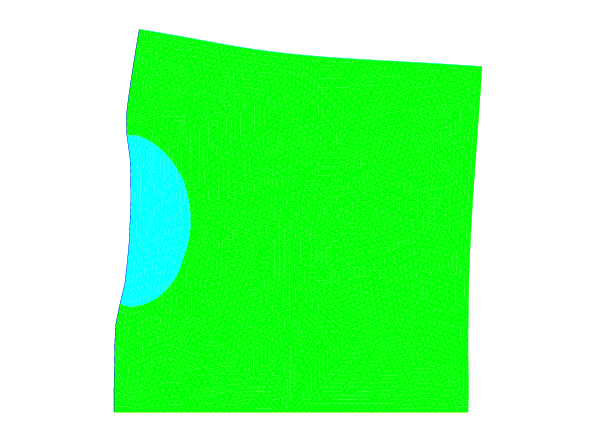}\hspace*{-1.3cm} \includegraphics[width=4cm]{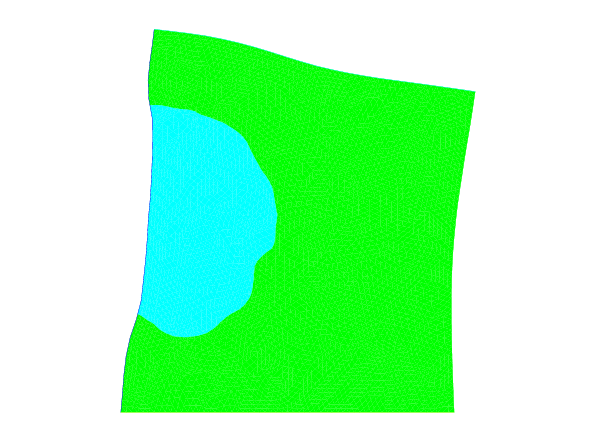}\hspace*{-1.3cm}  \includegraphics[width=4cm]{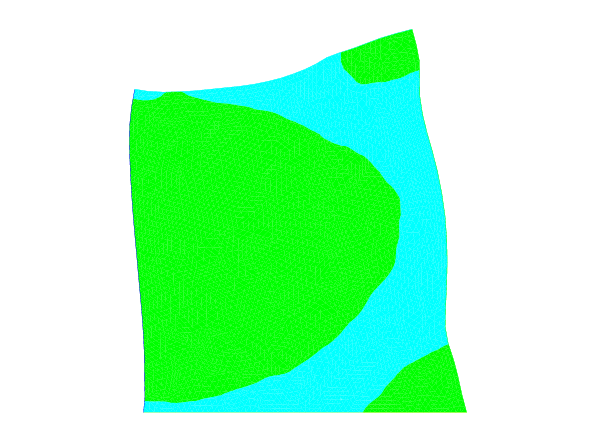}\hspace*{-1.3cm}  \includegraphics[width=4cm]{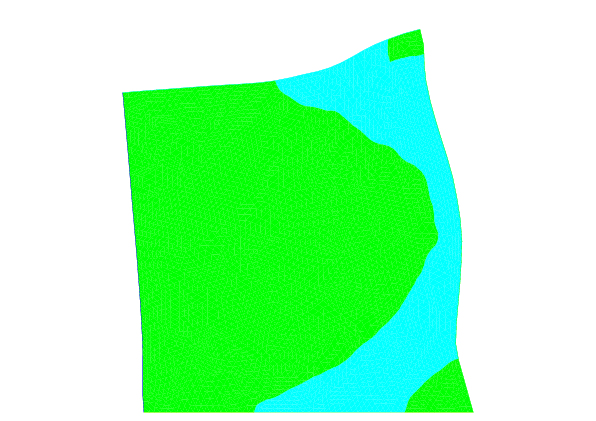}\\
\includegraphics[width=4cm]{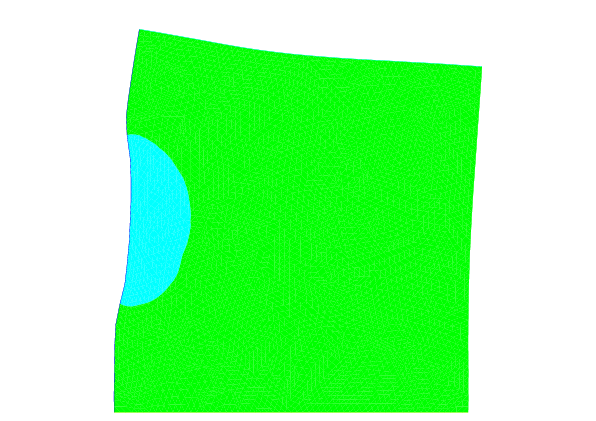}\hspace*{-1.3cm} \includegraphics[width=4cm]{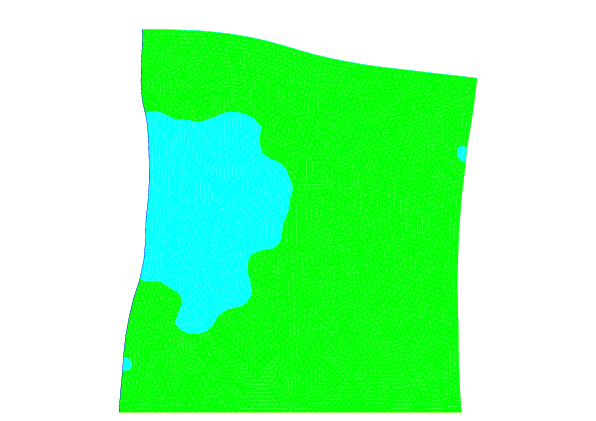}\hspace*{-1.3cm}  \includegraphics[width=4cm]{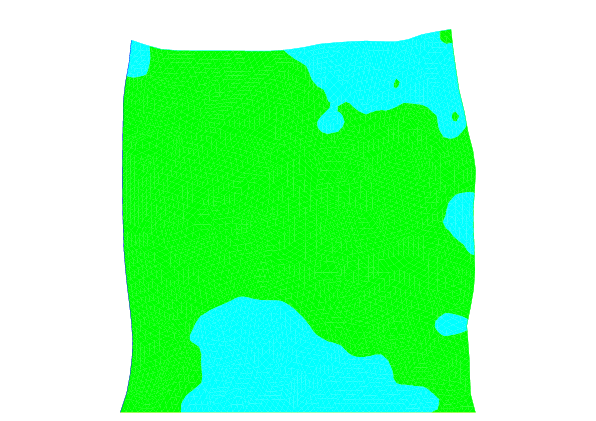}\hspace*{-1.3cm}  \includegraphics[width=4cm]{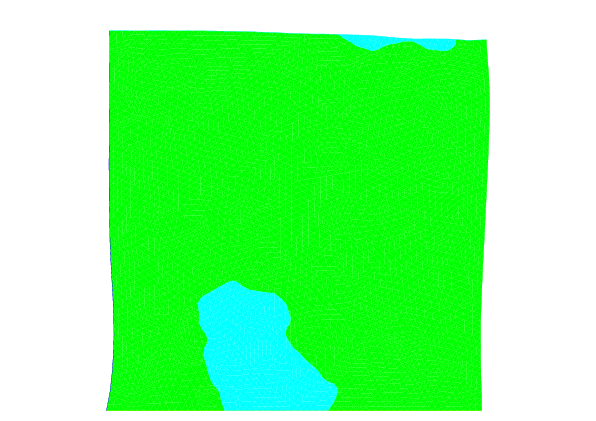}
\caption{The transmembrane potential $v$ at time iterations 
$7$, $70$, $230$ and $250$, from left to right. 
Top row: Deterministic Model. 
Middle row: Stochastic Model with $\beta_v=\beta_w=0.5$. 
Bottom row: Stochastic Model with $\beta_v=\beta_w=1$.
The green region indicates low values of $v$ (at rest), while the blue region 
represents high values of $v$ (activated).}
\label{SimFigures}
\end{figure}

\begin{figure}[ht]
  \centering
\includegraphics[width=10cm]{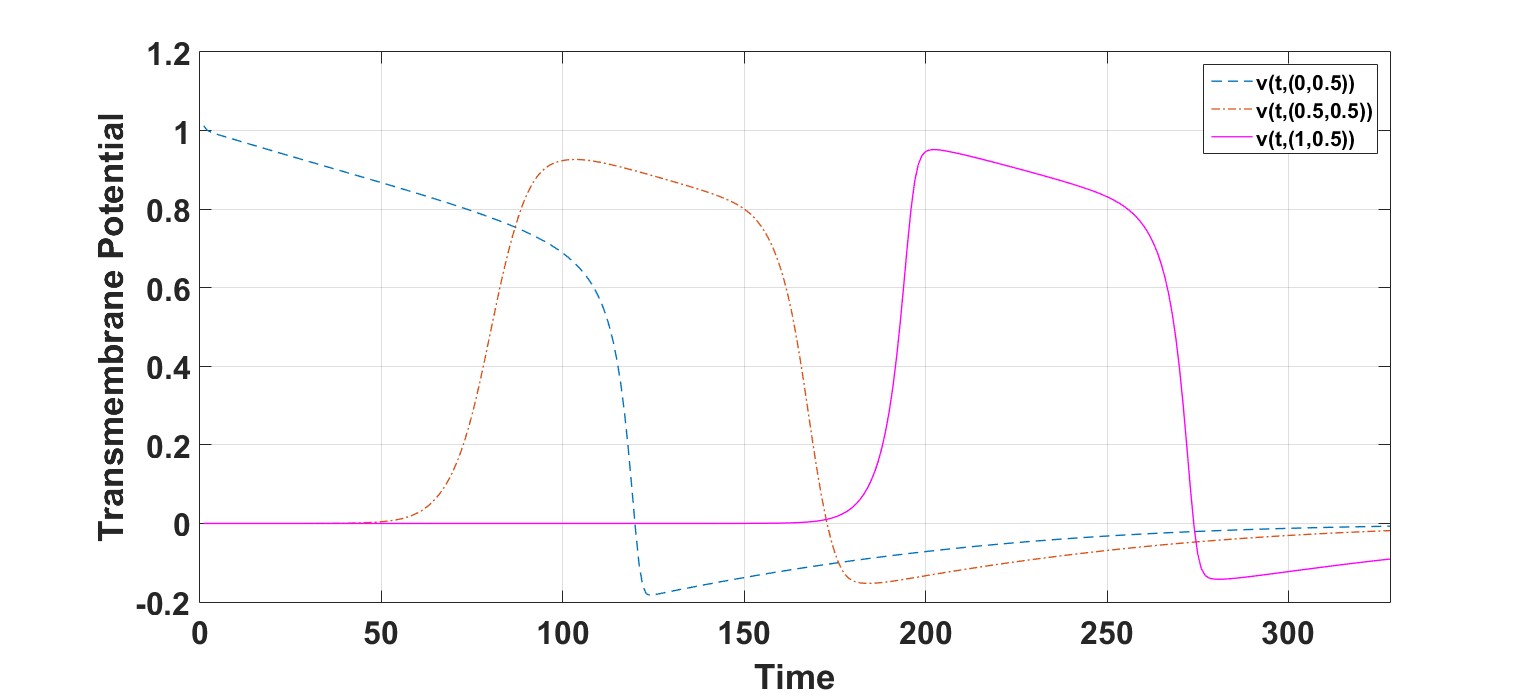}\\ 
\includegraphics[width=10cm]{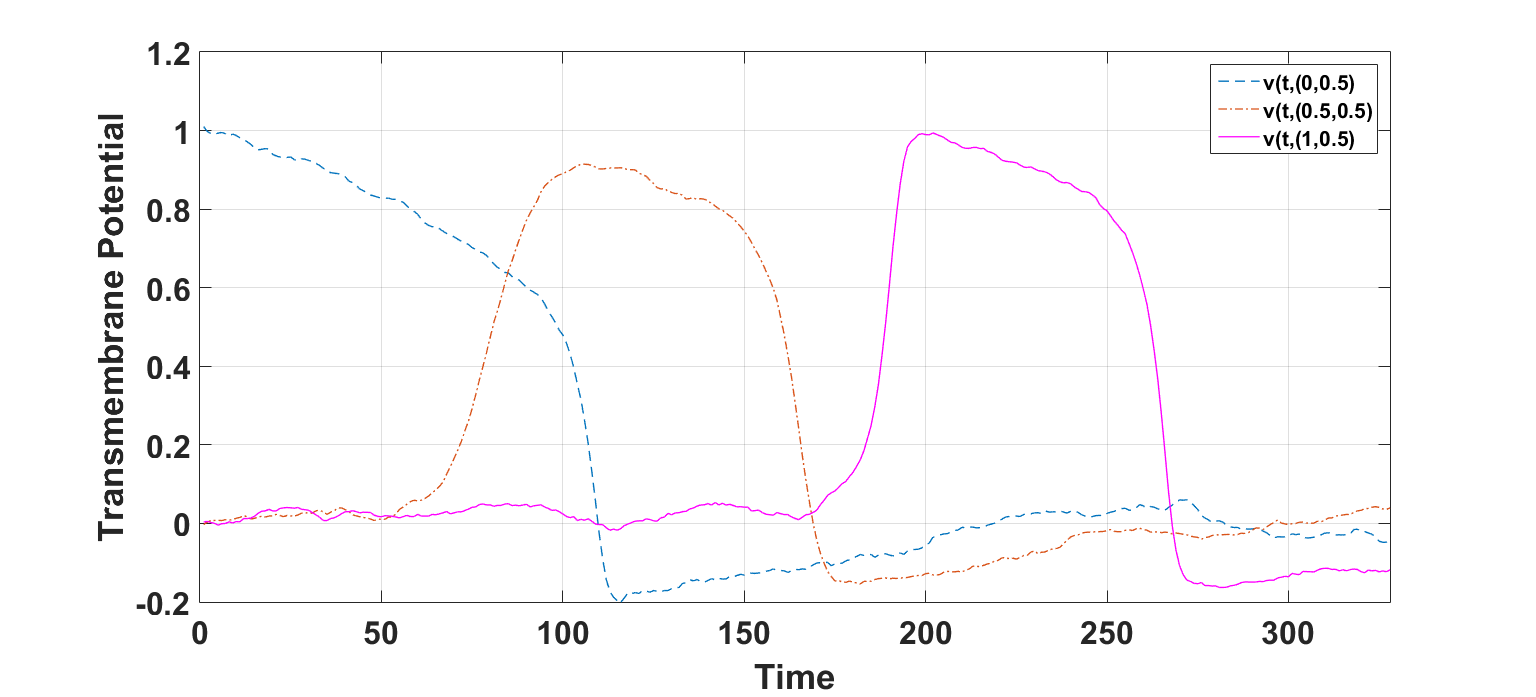}\\
\includegraphics[width=10cm]{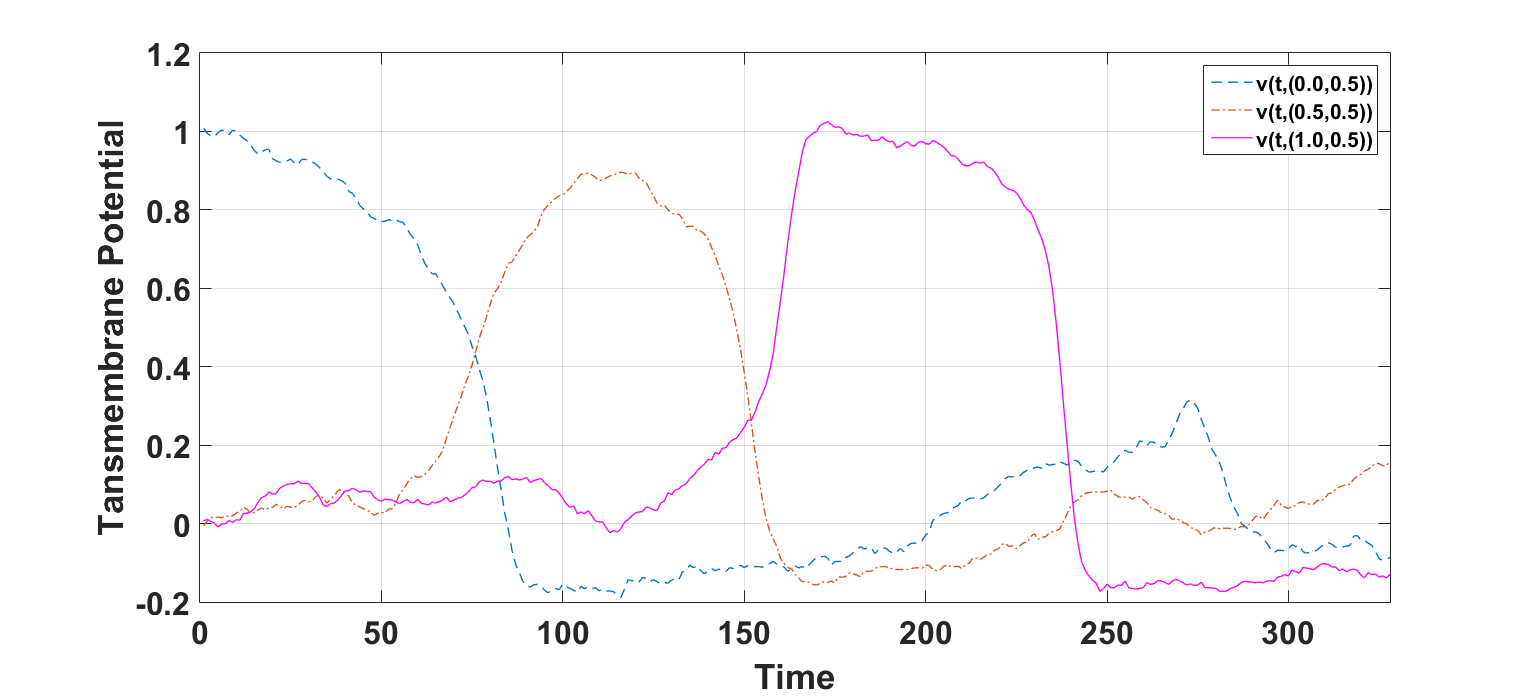}
\caption{Transmembrane potential recorded at 3 different 
points in the domain: $(0,0.5)$, $(0.5,0.5)$ and $(1,0.5)$. 
Top: Deterministic Model. Middle: Stochastic Model with 
$\beta_v=\beta_w=0.5$. Bottom: Stochastic Model 
with $\beta_v=\beta_w=1$.}\label{APcompare}
\end{figure}

\end{document}